\theoremstyle{plain}
\newtheorem{thm}{Theorem}[section]
\newtheorem{prop}[thm]{Proposition}
\newtheorem{cor}[thm]{Corollary}
\newtheorem{lem}[thm]{Lemma}
\newtheorem*{thm*}{Theorem}
\newtheorem*{conj*}{Conjecture}
\newtheorem*{prop*}{Proposition}
\newtheoremstyle{narrow}
  {.5em} 
  {.5em} 
  {\itshape} 
  {} 
  {\bfseries} 
  {.} 
  {.5em} 
  {} 
\theoremstyle{narrow}
\newtheorem*{thn*}{Theorem}
\newtheorem*{conjn*}{Conjecture}
\theoremstyle{definition}
\newtheorem{defi}[thm]{Definition}
\newtheorem*{nota*}{Notation}
\newcommand{\Q}{\mathbb{Q}}
\newcommand{\Z}{\mathbb{Z}}
\newcommand{\Ns}{\mathbb{Z}_{>0}}
\newcommand{\N}{\mathbb{Z}_{\geq0}}
\newcommand{\C}{\mathbb{C}}
\newcommand{\R}{\mathbb{R}}
\renewcommand{\H}{\mathbb{H}}
\newcommand{\wt}{\operatorname{wt}}
\renewcommand{\i}{\mathrm{i}}
\newcommand{\e}{\mathrm{e}}
\newcommand{\Aut}{\operatorname{Aut}}
\newcommand{\Hom}{\operatorname{Hom}}
\newcommand{\rk}{\operatorname{rk}}
\newcommand{\voa}{vertex operator algebra}
\newcommand{\Voa}{Vertex operator algebra}
\newcommand{\VOA}{Vertex Operator Algebra}
\newcommand{\vosa}{vertex operator subalgebra}
\newcommand{\fpvosa}{fixed-point vertex operator subalgebra}
\newcommand{\vac}{\textbf{1}}
\newcommand{\id}{\operatorname{id}}
\newcommand{\amgis}{\zeta}
\newcommand{\eps}{\varepsilon}
\newcommand{\lcm}{\operatorname{lcm}}
\newcommand{\ee}{\mathfrak{e}}
\newcommand{\g}{\mathfrak{g}}
\newcommand{\h}{\mathfrak{h}}
\newcommand{\orb}{\operatorname{orb}}
\newcommand{\strathol}{strongly rational, holomorphic}
\newcommand{\strat}{strongly rational}
\renewcommand{\O}{\operatorname{O}}
\newcommand{\Co}{\operatorname{Co}}
\newcommand{\gdh}{generalised deep hole}
\newcommand{\GDH}{Generalised Deep Hole}
\newcommand{\no}{\,{\raise0.25em\hbox{$\mathop{\hphantom{\cdot}}\limits^{_{\circ}}_{^{\circ}}$}}\,}
\newcommand{\bpmod}{\!\mod}
\renewcommand{\arraystretch}{1.2}
\begin{document}

\title[Geometric Classification]{{A Geometric Classification of the\\Holomorphic Vertex Operator Algebras\\of Central Charge 24}}
\author[S.\ Möller and N.~R.\ Scheithauer]{Sven Möller\textsuperscript{\lowercase{a}} and Nils~R. Scheithauer\textsuperscript{\lowercase{b}}}
\thanks{\textsuperscript{a}{Research Institute for Mathematical Sciences, Kyoto University, Kyoto, Japan}}
\thanks{\textsuperscript{b}{Technische Universität Darmstadt, Darmstadt, Germany}}
\thanks{Email: \href{mailto:math@moeller-sven.de}{\nolinkurl{math@moeller-sven.de}}, \href{mailto:scheithauer@mathematik.tu-darmstadt.de}{\nolinkurl{scheithauer@mathematik.tu-darmstadt.de}}}

\begin{abstract}
We associate with a \gdh{} of the Leech lattice \voa{} a generalised hole diagram. We show that this Dynkin diagram determines the \gdh{} up to conjugacy and that there are exactly $70$ such diagrams. In an earlier work we proved a bijection between the \gdh{}s and the \strathol{} \voa{}s of central charge $24$ with non-trivial weight-$1$ space. Hence, we obtain a new, geometric classification of these \voa{}s, generalising the classification of the Niemeier lattices by their hole diagrams.
\end{abstract}

\maketitle

\setcounter{tocdepth}{1}
\tableofcontents


\section{Introduction}

In 1968, Niemeier classified the positive-definite, even, unimodular lattices of rank~$24$ \cite{Nie73}. He showed that up to isomorphism there are exactly $24$ such lattices and that the isomorphism class of each lattice is uniquely determined by its root system. The Leech lattice $\Lambda$ is the unique Niemeier lattice without roots. There are at least five proofs of this classification result. Niemeier applied Kneser's neighbourhood method. Venkov found a proof based on harmonic theta series \cite{Ven80}. It can also be derived from Conway, Parker and Sloane's classification of the deep holes of the Leech lattice \cite{CPS82,Bor85b} and from the Smith-Minkowski-Siegel mass formula \cite{CS82c,CS99}. Finally, it also follows from the classification of certain automorphic representations of $\O_{24}$ \cite{CL19}.

We describe the third proof in more detail. In \cite{Bor85b} Borcherds showed that the Leech lattice $\Lambda$ is the unique Niemeier lattice without roots (see also \cite{Con69}) and that the orbits of deep holes of $\Lambda$, i.e.\ points in $\Lambda\otimes_\Z\R$ that have maximal distance to $\Lambda$, are in natural bijection with the other Niemeier lattices. These results are proved without explicitly classifying the deep holes or the Niemeier lattices. In \cite{CPS82} Conway, Parker and Sloane associate with a deep hole in $\Lambda$ a hole diagram, a certain affine Dynkin diagram whose vertices are the closest lattice points, and classify the possible diagrams by geometric methods. They find $23$ diagrams and show that a deep hole is fixed up to equivalence by its hole diagram. This implies that there are exactly $23$ Niemeier lattices with roots. In this paper we generalise this approach to \strathol{} \voa{}s of central charge $24$.

\Voa{}s and their representations axiomatise $2$-dimensional conformal field theories \cite{Bor86,FLM88}. They have found various applications in mathematics and mathematical physics, e.g., in geometry, group theory and the theory of automorphic forms. The theory of these algebras is in certain aspects similar to the theory of even lattices over the integers.

The weight-$1$ subspace $V_1$ of a \strathol{} \voa{} $V$ of central charge $24$ is a reductive Lie algebra. In 1993, Schellekens \cite{Sch93} (see also \cite{EMS20a}) showed that there are at most $71$ possibilities for this Lie algebra using the theory of Jacobi forms. He conjectured that all potential Lie algebras are realised and that the $V_1$-structure fixes the \voa{} up to isomorphism. By the work of many authors over the past three decades the following result is now proved (see, e.g., \cite{LL20,LS20b} and the references cited therein).
\begin{thn*}
Up to isomorphism there are exactly $70$ \strathol{} \voa{}s $V$ of central charge $24$ with $V_1\neq\{0\}$. Such a \voa{} is uniquely determined by its $V_1$-structure.
\end{thn*}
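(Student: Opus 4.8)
The plan is to reduce the classification of the \voa{}s to a classification of \gdh{}s of the Leech lattice \voa{} $V_\Lambda$, and to solve the latter by geometric means, in direct analogy with Conway, Parker and Sloane's treatment of the deep holes of $\Lambda$. Concretely, I would split the statement into an \emph{existence} part (every admissible $V_1$ is realised, which fixes the count at $70$) and a \emph{uniqueness} part ($V_1$ determines $V$), and route both through the bijection
\[
\{\,V\text{ as above}\,\}/{\cong}\;\longleftrightarrow\;\{\,\text{\gdh{}s of }V_\Lambda\,\}/\text{conjugacy}
\]
established in the earlier work. This converts a problem about \voa{}s into one about a distinguished set of automorphisms, equivalently geometric data, attached to $V_\Lambda$, where the machinery of lattices, Weyl groups and Dynkin diagrams is available.

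Next I would attach to each \gdh{} a generalised hole diagram, a Dynkin diagram encoding the weight-$1$ Lie-algebra structure together with its level data, refining the root-system invariant that Conway--Parker--Sloane use for ordinary deep holes. The heart of the argument is then a purely geometric enumeration showing that exactly $70$ such diagrams occur and that each is realised by a \gdh. For the upper bound I would invoke Schellekens' constraint that $V_1$ lies in an explicit list of at most $71$ reductive Lie algebras; discarding the case $V_1=\{0\}$ leaves at most $70$ candidate diagrams, and exhibiting a \gdh{} for each pins the count at exactly $70$ while simultaneously proving existence (the associated \voa{} then arises through the bijection above).

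For uniqueness I would prove that the generalised hole diagram is a \emph{complete} conjugacy invariant of the \gdh: two \gdh{}s with the same diagram are conjugate under $\Aut(V_\Lambda)$. Since the diagram is read off from the $V_1$-structure of the associated \voa, and the conjugacy class of the \gdh{} determines $V$ up to isomorphism via the bijection, this yields that $V_1$ fixes $V$ up to isomorphism, completing the second assertion of the theorem.

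The main obstacle is exactly this last rigidity statement together with the exhaustiveness of the enumeration, the analog of the delicate geometric part of Conway--Parker--Sloane. Unlike the classical deep-hole situation, a \gdh{} carries not only lattice-vector data but also the order and fixed-point structure of an associated finite-order automorphism; hence both the finiteness and exactness of the list of $70$ diagrams and the claim that the diagram recovers the hole up to conjugacy require controlling this extra group-theoretic data and ruling out distinct conjugacy classes sharing a diagram. I expect the completeness of the diagram as an invariant, rather than the counting itself, to be the genuinely hard step.
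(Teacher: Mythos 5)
Your overall architecture coincides with the paper's: route everything through the holy correspondence between isomorphism classes of \strathol{} \voa{}s with $V_1\neq\{0\}$ and algebraic conjugacy classes of \gdh{}s of $V_\Lambda$, attach a hole diagram to each \gdh{}, prove that the diagram is a complete conjugacy invariant by a Conway--Parker--Sloane-style enumeration inside $\pi_\nu(\Lambda)$, and realise each diagram by an explicit \gdh{} from \cite{MS19}. The genuine divergence is the source of the upper bound. You import Schellekens' theorem that $V_1$ lies in an explicit list of at most $71$ Lie algebras; the paper deliberately does not. Instead it uses only the lowest-order trace identity $h_i^\vee/k_i=(\dim(V_1)-24)/24$ (which has $221$ solutions) together with the lattice-theoretic constraints of \autoref{prop:conditions} coming from the correspondence itself, cutting the candidates to the $82$ pairs of affine structures and cycle shapes of \autoref{prop:13}; the $13$ spurious pairs of \autoref{table:13} are then eliminated by the same geometric machinery that proves rigidity. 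Your route is logically sound (Schellekens' bound is a theorem, so there is no circularity) and it spares you the elimination of the spurious cases; what it costs is exactly the paper's main point, namely that the classification can be made uniform and independent of Schellekens' higher-order trace identities and the large computer linear-programming computations behind his list. With your route the geometric enumeration no longer replaces that computation, it merely supplements it.

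One caveat on your rigidity step: the bare Dynkin diagram $\Phi(g)$ is \emph{not} a complete invariant. For example, the order-$18$ \gdh{}s with affine structures $A_{8,2}F_{4,2}$ (cycle shape $2^{12}$) and $A_{2,2}F_{4,6}$ (cycle shape $2^36^3$) both have $\Phi(g)=A_2$, and likewise $A_3$ and $\tilde{D}_4$ each occur for two distinct cycle shapes. The invariant must be the pair $(\varphi(g),\Phi(g))$ including the cycle shape of the projection to $\O(\Lambda)$, so your uniqueness argument additionally needs that the $V_1$-structure determines $\varphi(g)$; in the paper this follows from the observation that no affine structure occurs in more than one of the admissible pairs. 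Your phrase ``together with its level data'' gestures at this, but note that the levels live on $(V_\Lambda^{\orb(g)})_1$, not on the diagram of weights of the Cartan subalgebra acting on $V_\Lambda(g)_1$, so you must make explicit how the cycle shape is pinned down before the enumeration can be organised cycle shape by cycle shape.
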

The proof is based on a case-by-case analysis and uses a variety of methods.

The $24$ \voa{}s $V_N$ associated with the Niemeier lattices $N$ are examples of \voa{}s on Schellekens' list.

In this paper we give a uniform, geometric proof of the theorem based on the results in \cite{MS23}, which generalises the classification of the Niemeier lattices by enumeration of the corresponding deep holes of the Leech lattice $\Lambda$ \cite{CPS82,Bor85b}.

\medskip

One method to construct \voa{}s is the cyclic orbifold construction \cite{EMS20a}. Let $V$ be a \strathol{} \voa{} and $g$ an automorphism of $V$ of finite order~$n$. Then the fixed-point subalgebra $V^g$ is a \strat{} \voa{} with $n^2$ non-isomorphic irreducible modules, which can be realised as the eigenspaces of $g$ acting on the unique irreducible twisted modules $V(g^i)$ of $V$. If the twisted modules $V(g^i)$ have conformal weights in $(1/n)\Ns$ for $i\neq0\bpmod{n}$, then the direct sum $V^{\orb(g)}\coloneqq\oplus_{i\in\Z_n}V(g^i)^g$ is again a \strathol{} \voa{} of the same central charge as $V$. There is also an inverse orbifold construction, i.e.\ an automorphism $h$ of $V^{\orb(g)}$ such that $(V^{\orb(g)})^{\orb(h)}=V$.

Suppose that $V$ has central charge $24$ and that $n>1$. Pairing the character of $V^g$ with a certain vector-valued Eisenstein series of weight $2$ we obtain \cite{MS23}:
\begin{thn*}[Dimension Formula]
The dimension of the weight-$1$ subspace of $V^{\orb(g)}$ is given by
\begin{equation*}
\dim(V_1^{\orb(g)})=24+\sum_{d\mid n}c_n(d)\dim(V_1^{g^d})-R(g)
\end{equation*}
where the $c_n(d)\in\Q$ are defined by $\sum_{d\mid n}c_n(d)(t,d)=n/t$ for all $t\mid n$ and the remainder term $R(g)$ is non-negative. In particular,
\begin{equation*}
\dim(V_1^{\orb(g)})\leq24+\sum_{d\mid n}c_n(d)\dim(V_1^{g^d}).
\end{equation*}
\end{thn*}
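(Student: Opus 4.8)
The plan is to read off $\dim(V_1^{\orb(g)})$ as a single Fourier coefficient of the orbifold character and then to determine that coefficient by pairing against a weight-$2$ Eisenstein series. Since $V^{\orb(g)}=\bigoplus_{i\in\Z_n}V(g^i)^g$ and the $g$-invariants are cut out by averaging over $\langle g\rangle$, I would first write
\[
\ch_{V^{\orb(g)}}(\tau)=\frac1n\sum_{i,j\in\Z_n}T_{i,j}(\tau),\qquad T_{i,j}(\tau):=\tr_{V(g^i)}\bigl(g^j\,q^{L_0-1}\bigr).
\]
Because $c=24$ forces $\ch_{V^{\orb(g)}}=J+\dim(V_1^{\orb(g)})$ with $J=q^{-1}+O(q)$ the normalised Hauptmodul, the quantity we want is exactly the constant term $\frac1n[q^0]\sum_{i,j}T_{i,j}$. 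The untwisted part $i=0$ is transparent: by the projection formula $\frac1n\sum_j\tr_{V_1}g^j=\dim(V_1^g)$ it contributes $\dim(V_1^g)$, and the whole difficulty lies in the twisted sectors $i\neq0$, whose characters are series in fractional powers of $q$.

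The structural input is the modularity of the trace functions. By Zhu's theory in its twisted form (Dong--Li--Mason), the family $(T_{i,j})_{i,j\in\Z_n}$ is a weight-$0$ vector-valued modular form for $\SLZ$ with $S\colon T_{i,j}\mapsto T_{j,-i}$ and $T\colon T_{i,j}\mapsto(\text{phase})\,T_{i,i+j}$; equivalently, the $n^2$ characters of the irreducible $V^g$-modules transform under the Weil representation $\rho$ attached to the relevant \fqs{}. I would then take the weight-$2$ vector-valued Eisenstein series $E$ for the dual representation (in $\MpZ$ if the half-integral bookkeeping demands it), normalised so that its constant term is the all-ones vector, and form the scalar weight-$2$ modular form $h:=\langle(T_{i,j}),E\rangle$. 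This $h$ is holomorphic on $\H$ and, being invariant under $\tau\mapsto\tau+1$, expands in integral powers of $q$, so the residue of the invariant $1$-form $h\,d\tau$ at the cusp forces $[q^0]h=0$, i.e.
\[
n\,\dim(V_1^{\orb(g)})=-\sum_{a\neq0}\;\sum_{i,j}[q^{a}]T_{i,j}\cdot[q^{-a}]E.
\]
The only pole of the $T_{i,j}$ is the vacuum pole $[q^{-1}]T_{0,j}=\tr_{V_0}g^j=1$, as the twisted modules ($i\neq0$, type $0$) have no weight-$0$ states; pairing it with the corresponding $q^1$-coefficients of $E$ produces the absolute constant $24$, the analogue of the $-24$ in $E_2=1-24q-\cdots$.

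The remaining terms reorganise the twisted sectors. Here the $S$-transformation is decisive: it rewrites the leading twisted data through the untwisted twining constants $\tr_{V_1}g^d$, and the arithmetic of the Eisenstein Fourier coefficients repackages these as the fixed-point dimensions $\dim(V_1^{g^d})$. The identity $\sum_{d\mid n}c_n(d)(t,d)=n/t$ is precisely the linear inversion converting the twining values $\tr_{V_1}g^t$ into the stated coefficients, so it is at this step that the explicit shape of the formula appears. I expect the closed-form evaluation of these Eisenstein coefficients — constructing the Eisenstein series for $\rho$ over the level-$n$ data and matching its coefficients with the $c_n(d)$ — to be the main obstacle; it is the genuinely arithmetic core of the proof and needs the full theory of Eisenstein series for Weil representations rather than a soft modularity statement.

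Finally, the rest term $R(g)$ gathers exactly those summands with $a>0$ coming from states of the twisted modules $V(g^i)$, $i\neq0$, whose conformal weight lies in $(0,1)$ — the graded pieces that feed into the integral constant term of $h$ but are invisible to the polar and leading analysis. Each such contribution is the dimension of a homogeneous subspace multiplied by an Eisenstein coefficient of fixed sign, so $R(g)$ is a non-negative combination of non-negative integers; making this identification and checking the signs yields $R(g)\geq0$ and hence the displayed inequality, with equality precisely when every twisted module has minimal conformal weight at least $1$.
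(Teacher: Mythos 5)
Your skeleton is indeed the paper's: the statement is quoted from \cite{MS19}, whose proof does what you propose --- it organises the twisted-twining characters into a weight-$0$ vector-valued modular form for the Weil representation attached to the discriminant form $\Z_n\times\Z_n$, pairs it with a weight-$2$ Eisenstein series for the dual representation, and reads the formula off the vanishing constant term of the resulting weakly holomorphic weight-$2$ form for $\SLZ$, with the vacuum pole producing the $24$ and the twisted states of conformal weight in $(0,1)$ producing $-R(g)$.

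However, one of your key steps fails as written. If the Eisenstein series is normalised so that its constant term is the all-ones vector, then the $q^0$-part of the constant-term identity is $\sum_{i,j}[q^0]T_{i,j}=n\dim(V_1^{\orb(g)})$, and the only other contributions are the vacuum pole (a number depending only on $n$ and on the Eisenstein series) and the fractional terms coming from twisted states of weight in $(0,1)$. The identity would then read $\dim(V_1^{\orb(g)})=C_n-R'(g)$ with $C_n$ independent of $g$, so the term $\sum_{d\mid n}c_n(d)\dim(V_1^{g^d})$ could never appear. That it must appear is visible already for $n=2$, where the upper bound equals $3\dim(V_1^g)$ and hence distinguishes the two relevant classes of involutions of $V_\Lambda$ (bounds $48$ and $36$). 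The fixed-point dimensions can enter the identity only through the coefficients $[q^0]T_{0,j}=\tr_{V_1}(g^j)$, so the Eisenstein vector must weight the untwisted components $(0,j)$ differently from the twisted components $(i,j)$, $i\neq0$: what is needed is the specific linear combination of the Eisenstein series attached to the isotropic elements of $\Z_n\times\Z_n$ whose constant terms on the untwisted components encode, via the inversion $\sum_{d\mid n}c_n(d)(t,d)=n/t$, the coefficients $c_n(d)$, while being uniform on the twisted components. Your appeal to the $S$-transformation cannot substitute for this: modular invariance is already fully spent in making the pairing an $\SLZ$-form, and the Fourier coefficients at $\infty$ of the twisted sectors are not expressible through the untwisted twining traces --- they are precisely the independent data that constitute $R(g)$. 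A smaller point: at weight $2$ the Eisenstein series must be defined by Hecke's trick and need not be holomorphic, so one must additionally verify holomorphy of the combination used (or control the non-holomorphic correction), as is done in \cite{MS19}.
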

The remainder term $R(g)$ is described explicitly. It depends on the dimensions of the weight spaces of the irreducible $V^g$-modules of weight less than 1.

The upper bound in the theorem motivates the following definition. The automorphism $g$ is called a \emph{\gdh} of $V$ if
\begin{enumerate}[topsep=3pt, partopsep=0pt]
\item the upper bound in the dimension formula is attained, i.e.\ $\dim(V_1^{\orb(g)})=24+\sum_{d\mid n}c_n(d)\dim(V_1^{g^d})$,
\item any Cartan subalgebra of $V_1^g$ is also a Cartan subalgebra of $V_1^{\orb(g)}$, i.e.\ $\rk(V_1^{\orb(g)})=\rk(V_1^g)$.
\end{enumerate}
We also call the identity automorphism a \gdh{}.

Let $V_\Lambda$ be the \voa{} of the Leech lattice $\Lambda$. Recall that algebraic conjugacy means conjugacy of cyclic subgroups. An averaged version of Kac's very strange formula implies \cite{MS23} (cf.\ \cite{ELMS21}):
\begin{thn*}[Holey Correspondence]
The orbifold construction $g\mapsto V_{\Lambda}^{\orb(g)}$ defines a bijection between the algebraic conjugacy classes of \gdh{}s $g$ in $\Aut(V_\Lambda)$ with $\rk((V_\Lambda^g)_1)>0$ and the isomorphism classes of \strathol{} \voa{}s $V$ of central charge $24$ with $V_1\neq\{0\}$.
\end{thn*}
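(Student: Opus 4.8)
The plan is to realise $g\mapsto V_\Lambda^{\orb(g)}$ as a bijection by constructing an explicit inverse — a \emph{reverse orbifold} — and then checking that the two assignments are mutually inverse and respect the equivalences on both sides. First I would confirm that the forward map lands in the target set: for a \gdh{} $g$ of order $n$ and type~$0$ with $\rk((V_\Lambda^g)_1)>0$, strong rationality, holomorphicity and central charge~$24$ of $V_\Lambda^{\orb(g)}$ follow from the general cyclic orbifold theory once the twisted modules $V_\Lambda(g^i)$ are known to have positive conformal weight for $i\not\equiv0\pmod n$ (so the vacuum is the only weight-$0$ state), which is part of the data of a \gdh{}; and non-triviality of the weight-$1$ space is forced by condition~(2), since $\rk((V_\Lambda^{\orb(g)})_1)=\rk((V_\Lambda^g)_1)>0$. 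To pass to algebraic conjugacy classes I would observe that conjugating $g$ by $\phi\in\Aut(V_\Lambda)$ transports the whole orbifold datum to an isomorphic VOA, while replacing $g$ by $g^k$ with $\gcd(k,n)=1$ leaves $V_\Lambda^{\orb(g)}$ isomorphic by the Galois symmetry of the associated modular tensor category.

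\textbf{Surjectivity.} Given a \strathol{} \voa{} $V$ of central charge~$24$ with $\g:=V_1\neq\{0\}$, I would produce an automorphism $h\in\Aut(V)$ of type~$0$ whose orbifold $V^{\orb(h)}$ has abelian weight-$1$ Lie algebra of rank~$24$, hence is isomorphic to $V_\Lambda$ by the characterisation of the Leech lattice \voa{} as the unique \strathol{} \voa{} of central charge~$24$ with $V_1\cong\C^{24}$ — the VOA counterpart of $\Lambda$ being the unique Niemeier lattice without roots. Concretely, starting from a Cartan subalgebra $\h\subseteq\g$, I would assemble $h$ from an inner automorphism $\sigma_x=\e^{2\pi\i\,x_{(0)}}$ with $x\in\h$ and a lift of a suitable finite-order automorphism of $\g$, choosing $x$ so that in the orbifold all roots disappear and the weight-$1$ space becomes an abelian algebra of full rank~$24$. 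The inverse orbifold theorem then furnishes a dual automorphism $g\in\Aut(V_\Lambda)$ with $V_\Lambda^{\orb(g)}\cong V$ and $V_\Lambda^g=V^h$, and it remains to see that $g$ is a \gdh{}: here the averaged version of Kac's very strange formula expresses the conformal weights of the twisted modules, and thereby the rest term $R(g)$, through $\langle x,x\rangle$ and the level data of $\g$, so that the extremal choice of $x$ yields $R(g)=0$ (condition~(1)) while the rank bookkeeping gives condition~(2).

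\textbf{Injectivity.} To see that the two maps are mutually inverse up to the stated equivalences, I would use the inverse orbifold on the $V_\Lambda$ side: from $V_\Lambda^{\orb(g)}$ together with its reverse-orbifold automorphism one recovers the fixed-point subalgebra $V_\Lambda^g$ and the dual pair of cyclic actions. Since this datum pins down the algebraic conjugacy class of $\langle g\rangle$, an isomorphism $V_\Lambda^{\orb(g)}\cong V_\Lambda^{\orb(g')}$ forces $g$ and $g'$ to be algebraically conjugate.

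\textbf{Main obstacle.} I expect the decisive difficulty to be surjectivity: showing that every target VOA can be orbifolded back to $V_\Lambda$ and that the resulting dual automorphism is a genuine \gdh{}. This is precisely where the geometry of deep holes enters — $x$ must be maximally distant from the relevant coweight lattice of $\g$, mirroring Conway--Parker--Sloane's deep holes of $\Lambda$ — and where the averaged very strange formula is indispensable for converting that extremal condition into the vanishing of $R(g)$ together with the rank equality.
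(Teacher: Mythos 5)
First, a remark on the benchmark: this paper does not actually prove the Holy Correspondence itself --- it is \autoref{thm:bij}, imported from \cite{MS19}, and the paper only records that the cited proof ``combines the dimension formula with an averaged version of Kac's very strange formula'' and uses no classification input. Your outline (orbifold $V$ back down to $V_\Lambda$ by an automorphism built from $V_1$, then invoke the inverse orbifold construction) does match the architecture of that cited proof, and your treatment of well-definedness of the forward map is fine (though appealing to Galois symmetry of the modular tensor category is unnecessary: $V_\Lambda^{\orb(g)}$ visibly depends only on the cyclic group $\langle g\rangle$, since it is $\bigoplus_i V_\Lambda(g^i)^{\langle g\rangle}$). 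However, both of your main steps have genuine gaps. In the surjectivity step, ``choosing $x$ so that in the orbifold all roots disappear'' is not a construction; it is precisely the statement that has to be proven, and your plan never names a candidate. The actual argument takes the explicit element $u=\sum_{i=1}^r\rho_i/h_i^\vee$ (Weyl vectors divided by dual Coxeter numbers --- the very element that appears in \autoref{sec:24} of this paper as the inverse-orbifold automorphism), a \emph{purely inner} automorphism $\sigma_u=\e^{2\pi\i u_0}$ (no lift of an outer automorphism of $\g$ enters), checks type~$0$ and positivity for $V^{\sigma_u}$, and then combines the dimension formula with the averaged very strange formula to force $\dim(V_1^{\orb(\sigma_u)})=24$ with abelian weight-one space, whence $V^{\orb(\sigma_u)}\cong V_\Lambda$ by the characterisation you quote. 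Without identifying $u$, the extremality you hope to exploit cannot even be formulated.

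The injectivity step is where your argument is circular. From an abstract isomorphism $V_\Lambda^{\orb(g)}\cong V_\Lambda^{\orb(g')}$ you cannot ``recover the reverse-orbifold automorphism'': each side carries its own reverse automorphism $\zeta$, $\zeta'$, and these depend a priori on the orbifold \emph{presentations}, not only on the underlying \voa{}. The missing idea is the canonicality of the inverse automorphism of a \gdh{}: conditions (1) and (2) in \autoref{def:gendeephole} force $\zeta$ to be extremal, of type~$0$, and to restrict to an \emph{inner} automorphism of the weight-one Lie algebra, and the results of \cite{MS19,ELMS21} show that any such automorphism is conjugate to $\sigma_u$ with $u=\sum_i\rho_i/h_i^\vee$, which depends only on the Lie algebra structure of $V_1$. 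Only after this identification do $\zeta$ and $\zeta'$ become conjugate under the given isomorphism, and only then does invertibility of the orbifold construction yield that $g$ and $g'$ are algebraically conjugate. As written, your sentence ``since this datum pins down the algebraic conjugacy class'' assumes exactly what must be proven, so the injectivity half of your proposal does not go through without the canonicality lemma.
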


Let $g\in\Aut(V_\Lambda)$ be a \gdh{}. Then $\h\coloneqq(V_\Lambda^g)_1$ is a Cartan subalgebra of $(V_{\Lambda}^{\orb(g)})_1$. It acts on $(V_{\Lambda}(g))_1$. The corresponding weights form a Dynkin diagram, which we denote by $\Phi(g)$. Then the \emph{generalised hole diagram} of $g$ is defined as the pair $(\varphi(g),\Phi(g))$, where $\varphi(g)$ denotes the cycle shape of the image of $g$ under the natural projection $\Aut(V_\Lambda) \to \O(\Lambda)$. For example, if $V_{\Lambda}^{\orb(g)}$ is isomorphic to the \voa{} $V_N$ of the Niemeier lattice with Dynkin diagram $N$, then the generalised hole diagram of $g$ is $(1^{24},{\tilde N})$ where ${\tilde N}$ is the affine Dynkin diagram corresponding to $N$, i.e.\ the hole diagram of the Niemeier lattice inside the Leech lattice $\Lambda$.

Our main result is the following (see \autoref{thm:gdhclass}):
\begin{thn*}[Classification of \GDH{}s]
There are exactly $70$ conjugacy classes of \gdh{}s $g$ in $\Aut(V_\Lambda)$ with $\rk((V_\Lambda^g)_1)>0$. The class of a \gdh{} is uniquely determined by its generalised hole diagram.
\end{thn*}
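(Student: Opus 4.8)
The plan is to deduce the statement from the Holy Correspondence together with the classification theorem for the target \voa{}s. Combining the two, the orbifold map $g\mapsto V_\Lambda^{\orb(g)}$ is a bijection from the \emph{algebraic} conjugacy classes of \gdh{}s $g$ with $\rk((V_\Lambda^g)_1)>0$ onto the $70$ isomorphism classes of \strathol{} \voa{}s $V$ of central charge $24$ with $V_1\neq\{0\}$; in particular there are exactly $70$ such algebraic conjugacy classes. The theorem asks instead for the count by \emph{ordinary} conjugacy classes, together with the fact that the generalised hole diagram is a complete invariant. I would organise the proof around three points: (I) the generalised hole diagram is constant on algebraic conjugacy classes; (II) it determines the orbifold \voa{}, hence the algebraic class; and (III) every algebraic class consists of a single ordinary class. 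Granting these, the natural surjection from ordinary onto algebraic classes is a bijection, so there are exactly $70$ ordinary classes, and by (I)+(II) the hole diagram separates them.

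For (I) and (II) the arguments are largely formal. Conjugating $g$ by $x\in\Aut(V_\Lambda)$ carries the Cartan subalgebra $\h=(V_\Lambda^g)_1$ and the twisted module $V_\Lambda(g)$ isomorphically onto those of $xgx^{-1}$, so it preserves $\Phi(g)$, while the projections to $\O(\Lambda)$ stay conjugate and hence share the cycle shape $\varphi(g)$. Replacing $g$ by a power $g^k$ with $\gcd(k,\ord(g))=1$ leaves the characteristic polynomial of the image in $\O(\Lambda)$ unchanged, since $\lambda\mapsto\lambda^k$ permutes the primitive roots of unity of each order dividing $\ord(g)$, and hence fixes $\varphi(g)$; and because $\langle g^k\rangle=\langle g\rangle$ one has $V_\Lambda^{g^k}=V_\Lambda^g$ and $V_\Lambda^{\orb(g^k)}=V_\Lambda^{\orb(g)}$, so $\Phi(g^k)$ and $\Phi(g)$ both encode the root system of the common weight-one Lie algebra and agree. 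For (II) I would use that $\Phi(g)$ is the affine-type Dynkin diagram recording the root system of $(V_\Lambda^{\orb(g)})_1$ relative to $\h$: it reads off the semisimple type, while $\varphi(g)$ fixes $\dim\h=\rk((V_\Lambda^{\orb(g)})_1)$ and hence the dimension of the centre. Together these recover the $V_1$-structure of $V_\Lambda^{\orb(g)}$, which by the classification theorem determines the \voa{} up to isomorphism, and by the Holy Correspondence its algebraic class.

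Point (III) is the crux. A short computation with the orbifold decomposition shows that every generator $g^k$ of $\langle g\rangle$ with $\gcd(k,\ord(g))=1$ is again a \gdh{} with $\rk((V_\Lambda^{g^k})_1)>0$ and the same orbifold \voa{} (both defining conditions depend only on $\langle g\rangle$, since $V_\Lambda^{g^{kd}}=V_\Lambda^{g^d}$ for all $d\mid\ord(g)$), so the full set of generators lies in a single algebraic class. What remains is to show that these generators are mutually conjugate in $\Aut(V_\Lambda)$, i.e.\ that a \gdh{} is conjugate to each of its coprime powers. I would approach this through the inverse orbifold: writing $V_\Lambda=(V_\Lambda^{\orb(g)})^{\orb(h)}$ for the reverse automorphism $h$, the generators $g^k$ correspond to a family of inverse-orbifold automorphisms of $V_\Lambda^{\orb(g)}$, and the goal becomes a rigidity statement — uniqueness up to conjugacy of the automorphism realising the inverse orbifold — which can then be transported back to $\Aut(V_\Lambda)$. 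This passage from algebraic to ordinary conjugacy, that is, gaining control of conjugacy of elements rather than of cyclic subgroups inside the large group $\Aut(V_\Lambda)$, is the step I expect to be the main obstacle; everything else is bookkeeping built on top of the two quoted theorems.
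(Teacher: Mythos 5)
Your proposal inverts the logical structure of the paper and, in context, becomes circular: you take as input the classification theorem for \strathol{} \voa{}s of central charge $24$ ($70$ algebras, determined by their $V_1$-structure), but in this paper that theorem (\autoref{thm:voaclass}) is a \emph{consequence} of the classification of \gdh{}s, which is exactly what is to be proved; the stated aim is a proof independent of Schellekens' results and of the case-by-case literature behind the old classification. The paper argues in the opposite direction: the holy correspondence together with the trace identity \eqref{eq:221} yields at most $82$ candidate pairs of cycle shapes and affine structures (\autoref{prop:13}); geometric orbit computations in $\pi_\nu(\Lambda)$, following Conway, Parker and Sloane, show that each candidate diagram is realised by at most one \emph{ordinary} conjugacy class and that the $13$ spurious candidates are not realised at all (\autoref{prop:coolesache}); and existence comes from the explicit list of $70$ \gdh{}s in \cite{MS19}. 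In particular, rationality of \gdh{}s --- your point (III) --- falls out as a corollary of the classification rather than being needed as an input.

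Even granting the external \voa{} classification, your argument has two genuine gaps. First, in (II) the claim that $\Phi(g)$ ``reads off the semisimple type'' is false: by \autoref{prop:invtype} only the simple ideals with $l_ih_i^\vee=n$ contribute, and ideals of types $B$, $C$, $F_4$, $G_2$ contribute non-affine pieces, so the assignment is far from injective --- $B_{12,2}$ gives $\Phi(g)=A_1$, and both $A_{8,2}F_{4,2}$ (cycle shape $2^{12}$) and $A_{2,2}F_{4,6}$ (cycle shape $2^36^3$) give $\Phi(g)=A_2$. Worse, even the full pair $(\varphi(g),\Phi(g))$ does not separate the $82$ candidates: the two spurious affine structures $C_{3,2}^3A_{1,1}^3$ and $C_{3,2}^2A_{3,2}^2$ in \autoref{table:13} both produce $(2^{12},A_2^3)$. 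That the pair is a complete invariant among the $70$ realised cases is precisely what the paper's computations establish; it is not a formal consequence of the two quoted theorems plus bookkeeping. Second, point (III), which you yourself identify as the crux, is simply not proved: the asserted rigidity (uniqueness up to conjugacy of the automorphism realising the inverse orbifold, and its transport back to $\Aut(V_\Lambda)$) is a substantial claim that appears nowhere in the paper, and since the paper obtains the coincidence of algebraic and ordinary conjugacy only as a corollary of the geometric classification, there is no independent source for it either. As it stands, your argument establishes at most that there are $70$ \emph{algebraic} conjugacy classes --- which is immediate from \autoref{thm:bij} plus the assumed \voa{} classification --- not the theorem stated.
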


We outline the proof. The holey correspondence together with the lowest-order trace identity (see equation~\eqref{eq:221} in \autoref{sec:affine}) imply that there are at most $82$ possible generalised hole diagrams. These are described in \autoref{table:13} and \autoref{table:70}. Then, using geometric arguments similar to those by Conway, Parker and Sloane in \cite{CPS82} we reduce this number to $70$. On the other hand, in \cite{MS23} we explicitly list $70$ \gdh{}s with distinct diagrams. It follows that these are exactly the \gdh{}s $g$ of $V_\Lambda$ with $\rk((V_\Lambda^g)_1)>0$.

We observe (see \autoref{thm:proj11}):
\begin{thn*}[Projection to $\Co_0$]
Under the natural projection $\Aut(V_\Lambda)\to\O(\Lambda)$ the $70$ conjugacy classes of \gdh{}s $g$ with $\rk((V_\Lambda^g)_1)>0$ map to the $11$ conjugacy classes in $\O(\Lambda)\cong\Co_0$ with cycle shapes $1^{24}$, $1^82^8$, $1^63^6$, $2^{12}$, $1^42^24^4$, $1^45^4$, $1^22^23^26^2$, $1^37^3$, $1^22^14^18^2$, $2^36^3$ and $2^210^2$.
\end{thn*}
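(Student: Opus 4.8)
The plan is to obtain the statement as a bookkeeping consequence of the Classification of \GDH{}s together with the conjugacy-class structure of $\O(\Lambda)\cong\Co_0$. Since the projection $\Aut(V_\Lambda)\to\O(\Lambda)$ is a group homomorphism, it carries conjugacy classes to conjugacy classes, and by definition it sends a \gdh{} $g$ to the element $\bar g$ whose frame shape on $\Lambda\otimes_\Z\R$ is exactly the first component $\varphi(g)$ of the generalised hole diagram $(\varphi(g),\Phi(g))$. Thus the image of the $70$ classes is determined entirely by the cycle shapes recorded in \autoref{table:13} and \autoref{table:70}, and the first step is simply to read these off.

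The second step is to confirm that precisely the eleven listed cycle shapes occur. Each is a genuine cycle shape $\prod_d d^{a_d}$ with $\sum_d d\,a_d=24$, and for such a shape the fixed space of $\bar g$ on $\h=(V_\Lambda^g)_1$ has dimension $\sum_d a_d$, the multiplicity of the eigenvalue $1$. This is positive for all eleven shapes, matching the hypothesis $\rk((V_\Lambda^g)_1)>0$; note in particular that $2^{12}$ yields a $12$-dimensional fixed space and is therefore not the central element $-1$, whose frame shape $2^{24}/1^{24}$ has negative exponents and vanishing fixed space. This is the point at which the rank hypothesis excludes the fixed-point-free frame shapes from the list.

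The third and decisive step is to show that each of the eleven cycle shapes determines a \emph{single} conjugacy class of $\Co_0$, so that the projection lands in eleven well-defined classes rather than merely eleven frame shapes. For this I would consult the conjugacy-class data of $\O(\Lambda)\cong\Co_0$ and verify, shape by shape, that the listed frame shape is realised by a unique class; surjectivity onto all eleven then follows by pointing to one \gdh{} for each shape (for example, $1^{24}$ is realised by the \gdh{}s attached to the $23$ Niemeier lattices with roots). I expect this final step to be the only real difficulty: in $\Co_0$ the frame shape is not in general a complete conjugacy invariant, and an element and its product with the central involution have closely related frame shapes, so one must check that the eleven shapes arising here lie among those for which the characteristic polynomial does pin down the class.
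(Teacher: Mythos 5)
Your proposal is correct and is essentially the paper's own argument: the paper presents \autoref{thm:proj11} as an observation that falls out of the classification in \autoref{thm:gdhclass}, whose outcome is organised by the cycle shape of the projection, exactly as in your first two steps. Two remarks. First, a slip to fix: the $70$ \gdh{}s and their cycle shapes are recorded in \autoref{table:70} alone; \autoref{table:13} lists the \emph{spurious} candidates, and its extra shapes $6^4$, $4^6$, $3^8$, $2^44^4$ are precisely those the classification eliminates, so reading shapes off both tables, as you propose, would produce $15$ shapes and make your second step fail --- only \autoref{table:70} may enter the read-off. Second, your third step correctly isolates the one input needed beyond the tables, namely that each listed shape is realised by a unique class; in the paper this is not a separate verification but is absorbed into \autoref{prop:13}, cited from \cite{ELMS21}, which enumerates pairs of affine structures and \emph{conjugacy classes} of $\O(\Lambda)$ (not merely cycle shapes), and into the explicit list of $70$ \gdh{}s in \cite{MS19}, whose projections are specified as classes. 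Your caution that frame shapes do not separate conjugacy classes of $\Co_0$ in general is well placed; it is also why the body of the paper states the result for \emph{algebraic} conjugacy classes, the invariant insensitive to the Galois-conjugacy coincidences you mention, and for the eleven shapes at hand the $\Co_0$ class data does confirm the uniqueness your plan requires.
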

This recovers the decomposition of the Schellekens \voa{}s into $12$ families described by Höhn in \cite{Hoe17} (cf.\ \cite{HM23} in the fermionic case). The precise connection is explored in \cite{HM22}, Section~4.2.

A consequence of the classification of \gdh{}s is:
\begin{thn*}[Classification of \VOA{}s]
Up to isomorphism there are exactly $70$ \strathol{} \voa{}s $V$ of central charge $24$ with $V_1\neq\{0\}$. Such a \voa{} is uniquely determined by its $V_1$-structure.
\end{thn*}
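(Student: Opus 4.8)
The plan is to obtain the statement as a formal consequence of the two preceding theorems, the Holy Correspondence and the Classification of \gdh{}s. The Holy Correspondence provides a bijection from the \emph{algebraic} conjugacy classes of \gdh{}s $g$ in $\Aut(V_\Lambda)$ with $\rk((V_\Lambda^g)_1)>0$ onto the isomorphism classes of the \voa{}s in question; the Classification of \gdh{}s counts the \emph{ordinary} conjugacy classes of these \gdh{}s as exactly $70$ and shows that each is determined by its generalised hole diagram. Composing the two would immediately give the count, were it not for the mismatch between the two notions of conjugacy. Since algebraic conjugacy is coarser than ordinary conjugacy, we obtain at most $70$ \voa{}s at once; the real work is to show that no two of the $70$ ordinary classes merge, i.e.\ that the natural surjection from ordinary onto algebraic conjugacy classes is a bijection.

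For this I would prove that the generalised hole diagram $(\varphi(g),\Phi(g))$ is an invariant of the algebraic conjugacy class, i.e.\ is unchanged when $g$ is replaced by a power $g^k$ with $\gcd(k,\ord(g))=1$. For the cycle shape this is clear: the eleven cycle shapes occurring (by the Projection to $\Co_0$) are Galois stable, so raising the image of $g$ in $\O(\Lambda)$ to a coprime power merely permutes the cycles of each length among themselves and leaves $\varphi(g)$ fixed. For $\Phi(g)$ one uses that $\langle g\rangle=\langle g^k\rangle$, so the fixed-point subalgebra $V_\Lambda^g=V_\Lambda^{g^k}$ and the Cartan subalgebra $\h=(V_\Lambda^g)_1$ are unchanged, while $V_\Lambda^{\orb(g)}\cong V_\Lambda^{\orb(g^k)}$, the orbifold already being well defined on algebraic classes in the Holy Correspondence; the root system of $(V_\Lambda^{\orb(g)})_1$ relative to $\h$, and hence the diagram, is therefore the same. (The \gdh{} property is itself algebraic-conjugacy invariant, since both the rank condition and the dimension bound depend only on $\langle g\rangle$: one checks $V_\Lambda^{g^{kd}}=V_\Lambda^{g^{\gcd(d,\ord(g))}}=V_\Lambda^{g^d}$ for every $d$.) Combined with the Classification of \gdh{}s, algebraically conjugate \gdh{}s of positive rank then share a diagram and so are ordinarily conjugate; hence there are exactly $70$ algebraic conjugacy classes, and the Holy Correspondence yields exactly $70$ such \voa{}s.

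It remains to see that $V$ is determined by its weight-$1$ Lie algebra. Writing $V=V_\Lambda^{\orb(g)}$, the diagram $\Phi(g)$ records the root system, hence the semisimple part of $V_1$, while the cycle shape $\varphi(g)$ fixes the remaining reductive data; so the $V_1$-structure recovers the generalised hole diagram, and with it the conjugacy class of $g$ and the isomorphism class of $V$. In practice this last injectivity is confirmed against \autoref{table:13} and \autoref{table:70}, where the $70$ diagrams are seen to produce $70$ pairwise non-isomorphic weight-$1$ Lie algebras. The step I expect to require the most care is the reconciliation of the two conjugacy notions, and within it the invariance of $\Phi(g)$ under a coprime power: that diagram is defined through the single twisted sector $(V_\Lambda(g))_1$ rather than through the isomorphism type of the orbifold alone, so one must argue that passing from $g$ to $g^k$, which relabels the twisted sectors, leaves the resulting root datum relative to the common Cartan $\h$ unchanged. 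Once this invariance is in place, the theorem is a formal composition of the two bijections.
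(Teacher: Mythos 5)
Your proposal is correct and takes essentially the same route as the paper: there, too, the theorem is obtained by composing the Holy Correspondence (\autoref{thm:bij}) with the Classification of \GDH{}s (\autoref{thm:gdhclass}), and your ``reconciliation of the two conjugacy notions'' is exactly the paper's corollary that the \gdh{}s $g$ with $\rk((V_\Lambda^g)_1)>0$ are \emph{rational}, i.e.\ that ordinary and algebraic conjugacy coincide for them. Two precisions are worth recording. First, the invariance of $\Phi(g)$ under $g\mapsto g^k$, which you rightly flag as the delicate step, does not follow from the root system of $(V_\Lambda^{\orb(g)})_1$ relative to $\h$ being unchanged: the diagram is only the part of that root system lying in the $g$-twisted sector, and passing to $g^k$ relabels the sectors; the correct argument is that, by \autoref{prop:invtype}, the diagram depends only on the isomorphism type of $(V_\Lambda^{\orb(g)})_1$, which is the same for $g$ and $g^k$. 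Second, your claim that $\Phi(g)$ ``records the root system, hence the semisimple part of $V_1$'' has the logical direction reversed (for instance $\Phi(g)=A_1$ while $V_1=B_{12,2}$); what the uniqueness statement needs is the opposite implication $V_1\Rightarrow(\varphi(g),\Phi(g))$, which holds because $V_1$ fixes the affine structure by the trace identity \eqref{eq:221}, the affine structure fixes the cycle shape by \autoref{prop:13}, and fixes $\Phi(g)$ by \autoref{prop:invtype} --- or, as you fall back on, simply because the $70$ Lie algebras in \autoref{table:70} are pairwise non-isomorphic.
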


In contrast to the previous proof, our argument is uniform and, except for the lowest-order trace identity, independent of Schellekens' results.

\subsection*{Outline}
In \autoref{sec:voaaut} we describe the orbifold construction, lattice \voa{}s and the automorphisms of the Leech lattice \voa{}.

In \autoref{sec:24} we recall some results on \strathol{} \voa{}s of central charge $24$, in particular the bijection with the \gdh{}s of the Leech lattice \voa{}.

In \autoref{sec:holediag} we associate a generalised hole diagram with a \gdh{} of the Leech lattice \voa{}.

In \autoref{sec:gdhclass} we finally use the generalised hole diagrams to classify the \gdh{}s of the Leech lattice \voa{}.

\subsection*{Acknowledgements}
The authors thank Tomoyuki Arakawa and Gerald Höhn for valuable discussions, and the referees for their helpful comments. Sven Möller was supported by a JSPS \emph{Postdoctoral Fellowship for Research in Japan} and by JSPS Grant-in-Aid KAKENHI 20F40018. Nils Scheithauer acknowledges support by the LOEWE research unit \emph{Uniformized Structures in Arithmetic and Geometry} and by the DFG through the CRC \emph{Geometry and Arithmetic of Uniformized Structures}, project number 444845124.


\section{\VOA{}s and Their Automorphisms}\label{sec:voaaut}

In this section we review the cyclic orbifold construction and describe the automorphisms of the Leech lattice \voa{} $V_\Lambda$.

\medskip

A \voa{} $V$ is called \emph{\strat{}} if it is rational (as defined, e.g., in \cite{DLM97}), $C_2$-cofinite (or lisse), self-contragredient (or self-dual) and of CFT-type. This already implies that $V$ is simple.

Moreover, a simple \voa{} $V$ is said to be \emph{holomorphic} if $V$ itself is the only irreducible $V$-module. The central charge of a \strathol{} \voa{} $V$ is necessarily a non-negative multiple of $8$.

Examples of \strat{} \voa{}s are those associated with positive-definite, even lattices. If the lattice is unimodular, then the associated \voa{} is holomorphic.


\subsection{Orbifold Construction}

The cyclic orbifold construction \cite{EMS20a,Moe16} is an important tool that can be used to construct new \voa{}s from known ones.

Let $V$ be a \strathol{} \voa{} and $G=\langle g\rangle$ a finite, cyclic group of automorphisms of $V$ of order $n$.

By \cite{DLM00} there is an up to isomorphism unique irreducible $g^i$-twisted $V$-module $V(g^i)$ for each $i\in\Z_n$. The uniqueness of $V(g^i)$ implies that there is a representation $\phi_i\colon G\to\Aut_\C(V(g^i))$ of $G$ on the vector space $V(g^i)$ such that
\begin{equation*}
\phi_i(g)Y_{V(g^i)}(v,x)\phi_i(g)^{-1}=Y_{V(g^i)}(g v,x)
\end{equation*}
for all $v\in V$, $i\in\Z_n$. This representation is unique up to an $n$-th root of unity. Denote the eigenspace of $\phi_i(g)$ in $V(g^i)$ corresponding to the eigenvalue $\e^{2\pi\i j/n}$ by $W^{(i,j)}$. On $V(g^0)=V$ we choose $\phi_0(g)=g$.

By \cite{DM97,Miy15,CM16,McR21} the \fpvosa{} $V^g=W^{(0,0)}$ is again \strat{}. It has exactly $n^2$ irreducible modules, namely the $W^{(i,j)}$, $i,j\in\Z_n$ \cite{MT04,DRX17}. One can further show that the conformal weight $\rho(V(g))$ of $V(g)$ is in $(1/n^2)\Z$, and we define the type $t\in\Z_n$ of $g$ by $t=n^2\rho(V(g))\bpmod{n}$.

Assume for simplicity that $g$ has type~$0$, i.e.\ that $\rho(V(g))\in(1/n)\Z$. Then it is possible to choose the representations $\phi_i$ such that the conformal weights satisfy
\begin{equation*}
\rho(W^{(i,j)})=\frac{ij}{n}\bpmod{1}
\end{equation*}
and $V^g$ has fusion rules
\begin{equation*}
W^{(i,j)}\boxtimes W^{(k,l)}\cong W^{(i+k,j+l)}
\end{equation*}
for all $i,j,k,l\in\Z_n$, i.e.\ the fusion ring of $V^g$ is the group ring $\C[\Z_n\times\Z_n]$ \cite{EMS20a}. In particular, all irreducible $V^g$-modules are simple currents.

In essence, the results in \cite{EMS20a} show that for cyclic $G\cong\Z_n$ and \strathol{} $V$ the module category of $V^G$ is the \emph{twisted group double} $\mathcal{D}_\omega(G)$ where the $3$-cocycle $[\omega]\in H^3(G,\C^\times)\cong\Z_n$ is determined by the type $t\in\Z_n$. This proves a special case of a conjecture in \cite{DVVV89,DPR90} stated for arbitrary finite $G$. The general case is proved in \cite{DNR21b}.

\medskip

In general, a simple \voa{} $V$ is said to satisfy the \emph{positivity condition} if the conformal weight $\rho(W)>0$ for any irreducible $V$-module $W\not\cong V$ and $\rho(V)=0$.

Now, if $V^g$ satisfies the positivity condition (it is shown in \cite{Moe18} that this condition is almost automatically satisfied if $V$ is \strat{}) and $g$ has type~$0$, then the direct sum of $V^g$-modules
\begin{equation*}
V^{\orb(g)}\coloneqq\bigoplus_{i\in\Z_n}W^{(i,0)}
\end{equation*}
admits the structure of a \strathol{} \voa{} of the same central charge as $V$ and is called \emph{orbifold construction} associated with $V$ and $g$ \cite{EMS20a}. Note that $\bigoplus_{j\in\Z_n}W^{(0,j)}$ gives back the original \voa{} $V$.

\medskip

We briefly describe the \emph{inverse} (or \emph{reverse}) orbifold construction \cite{EMS20a,LS19}. Suppose that the \strathol{} \voa{} $V^{\orb(g)}$ is obtained by an orbifold construction as described above. Then via $\amgis v\coloneqq\e^{2\pi\i j/n}v$ for $v\in W^{(j,0)}$ we define an automorphism $\amgis$ of $V^{\orb(g)}$ of order~$n$ and type~$0$, and the unique irreducible $\amgis^j$-twisted $V^{\orb(g)}$-module is given by $V^{\orb(g)}(\amgis^j)=\bigoplus_{i\in\Z_n}W^{(i,j)}$, $j\in\Z_n$. Then
\begin{equation*}
(V^{\orb(g)})^{\orb(\amgis)}=\bigoplus_{j\in\Z_n}W^{(0,j)}=V,
\end{equation*}
i.e.\ orbifolding with $\amgis$ is inverse to orbifolding with $g$.


\subsection{Automorphisms of the Leech Lattice \VOA{}}\label{sec:lat}

We describe lattice \voa{}s \cite{Bor86,FLM88}, the automorphism group of the Leech lattice \voa{} $V_\Lambda$ and in particular its conjugacy classes, which were determined in \cite{MS23}.

For a positive-definite, even lattice $L$ with bilinear form $\langle\cdot,\cdot\rangle\colon L\times L\to\Z$ the associated \voa{} is given by
\begin{equation*}
V_L=M(1)\otimes\C_\eps[L]
\end{equation*}
where $M(1)$ is the Heisenberg \voa{} of rank $\rk(L)$ associated with $\h_L=L\otimes_\Z\C$ and $\C_\eps[L]$ the twisted group algebra, i.e.\ the algebra with basis $\{\ee_\alpha\,|\,\alpha\in L\}$ and products $\ee_\alpha\ee_\beta=\eps(\alpha,\beta)\ee_{\alpha+\beta}$ where $\eps\colon L\times L\to\{\pm1\}$ is a 2-cocycle satisfying $\eps(\alpha,\beta)/\eps(\beta,\alpha)=(-1)^{\langle\alpha,\beta\rangle}$.

Let $\O(L)$ denote the orthogonal group (or automorphism group) of the lattice~$L$. For $\nu\in\O(L)$ and a function $\eta\colon L\to\{\pm1\}$ the map $\phi_\eta(\nu)$ acting on $\C_\eps[L]$ as $\phi_\eta(\nu)(\ee_\alpha)=\eta(\alpha)\ee_{\nu\alpha}$ for $\alpha\in L$ and as $\nu$ on $M(1)$ defines an automorphism of $V_L$ if and only if
\begin{equation*}
\frac{\eta(\alpha)\eta(\beta)}{\eta(\alpha+\beta)}=\frac{\eps(\alpha,\beta)}{\eps(\nu\alpha,\nu\beta)}
\end{equation*}
for all $\alpha,\beta\in L$. In this case, $\phi_\eta(\nu)$ is called a \emph{lift} of $\nu$ and all such automorphisms form the subgroup $\O(\hat{L})$ of $\Aut(V_L)$. There is a short exact sequence
\begin{equation*}
1\to\Hom(L,\{\pm1\})\to\O(\hat{L})\to\O(L)\to1
\end{equation*}
with the surjection $\O(\hat{L})\to\O(L)$ given by $\phi_\eta(\nu)\mapsto\nu$. The image of $\Hom(L,\{\pm1\})$ in $\O(\hat{L})$ is exactly the lifts of $\id\in\O(L)$.

If the restriction of $\eta$ to the fixed-point lattice $L^\nu$ is trivial, we call $\phi_\eta(\nu)$ a \emph{standard lift} of $\nu$. It is always possible to choose $\eta$ in this way \cite{Lep85}. It was proved in \cite{EMS20a} that all standard lifts of a given $\nu\in\O(L)$ are conjugate in $\Aut(V_L)$.

For any \voa{} $V$, $K\coloneqq\langle\{\e^{v_0}\,|\,v\in V_1\}\rangle$ defines a normal subgroup of $\Aut(V)$ called the \emph{inner automorphism group} of $V$. By \cite{DN99} the automorphism group of $V_L$ is of the form
\begin{equation*}
\Aut(V_L)=\O(\hat{L})\cdot K,
\end{equation*}
and moreover $\Hom(L,\{\pm1\})$ is a subgroup of $K\cap\O(\hat{L})$ and $\Aut(V_L)/K$ isomorphic to a quotient of $\O(L)$.

\medskip

In the following, we specialise to the Leech lattice $\Lambda$, the up to isomorphism unique unimodular, positive-definite, even lattice of rank $24$ without roots, i.e.\ without vectors of norm $2$. The automorphism group $\O(\Lambda)$ is Conway's group $\Co_0$. Since $(V_\Lambda)_1=\{h(-1)\otimes\ee_0\,|\,h\in\h_\Lambda\}\cong\h_\Lambda$ with $\h_\Lambda=\Lambda\otimes_\Z\C$, the inner automorphism group is given by
\begin{equation*}
K=\{\e^{h_0}\,|\,h\in\h_\Lambda\}
\end{equation*}
and is abelian. Because $K\cap\O(\hat{\Lambda})=\Hom(\Lambda,\{\pm1\})$ in the special case of the Leech lattice, there is a short exact sequence
\begin{equation*}
1\to K\to\Aut(V_\Lambda)\to\O(\Lambda)\to1.
\end{equation*}
Hence, every automorphism of $V_\Lambda$ is of the form
\begin{equation*}
\phi_\eta(\nu)\sigma_h
\end{equation*}
with a lift $\phi_\eta(\nu)$ of some $\nu\in\O(\Lambda)$ and with $\sigma_h=\e^{2\pi\i h_0}$ for some $h\in\h_\Lambda$. The surjection $\Aut(V_\Lambda)\to\O(\Lambda)$ in the short exact sequence is given by $\phi_\eta(\nu)\sigma_h\mapsto\nu$. It suffices to take a standard lift $\phi_\eta(\nu)$ of $\nu$ because any two lifts of $\nu$ only differ by a homomorphism $\Lambda\to\{\pm 1\}$, which can be absorbed into $\sigma_h$. Moreover, since $\sigma_h=\id$ if and only if $h\in\Lambda'=\Lambda$, it is enough to take $h\in\h_\Lambda/\Lambda$.

\medskip

We describe the conjugacy classes of $\Aut(V_\Lambda)$. For $\nu\in\O(\Lambda)$ let
\begin{equation*}
\pi_\nu=\frac{1}{|\nu|}\sum_{i=0}^{|\nu|-1}\!\nu^i
\end{equation*}
denote the projection from $\h_\Lambda$ onto the elements of $\h_\Lambda$ fixed by $\nu$. The automorphism $\phi_\eta(\nu)\sigma_h$ is conjugate to $\phi_\eta(\nu)\sigma_{\pi_\nu(h)}$ for any $h\in\h_\Lambda$, and $\phi_\eta(\nu)$ and $\sigma_{\pi_\nu(h)}$ commute.

In \cite{MS23} all automorphisms in $\Aut(V_\Lambda)$ were classified up to conjugacy. A similar result for arbitrary lattice \voa{}s was proved in \cite{HM22}.
\begin{prop}[\cite{MS23}]\label{prop:leechconj}
Let $Q\coloneqq\{(\nu,h)\,|\,\nu\in N,h\in H_\nu\}$ where
\begin{enumerate}
\item\label{item:rep1} $N$ is a set of representatives for the conjugacy classes in $\O(\Lambda)$,
\item\label{item:rep2} $H_\nu$ is a set of representatives for the orbits of the action of the centraliser $C_{\O(\Lambda)}(\nu)$ on $\pi_\nu(\h_\Lambda)/\pi_\nu(\Lambda)$.
\end{enumerate}
Fix a section $\nu\mapsto\phi_\eta(\nu)$. Then the map $(\nu,h)\mapsto\phi_\eta(\nu)\sigma_h$ is a bijection from the set $Q$ to the conjugacy classes of $\Aut(V_\Lambda)$.
\end{prop}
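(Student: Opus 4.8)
The plan is to exploit the short exact sequence $1\to K\to\Aut(V_\Lambda)\to\O(\Lambda)\to1$ and to classify conjugacy classes fibre by fibre over $\O(\Lambda)$, using that conjugation in $\Aut(V_\Lambda)$ descends to conjugation in $\O(\Lambda)$. First I would establish a normal form. Given an arbitrary automorphism $\phi_\eta(\nu)\sigma_h$, its image in $\O(\Lambda)$ is $\nu$; conjugating by a lift $\phi_\eta(\mu)$ replaces this image by $\mu\nu\mu^{-1}$, so I may assume $\nu\in N$. The resulting automorphism is again a standard lift of $\nu$ times some $\sigma$, because any two standard lifts of $\nu$ differ by an element of $\Hom(\Lambda,\{\pm1\})\subseteq K$, which is absorbed into the $\sigma$-factor. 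Thus every conjugacy class has a representative $\phi_\eta(\nu)\sigma_h$ with $\nu\in N$.

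Next I would reduce the $K$-part within the fibre over a fixed $\nu\in N$. From $\phi_\eta(\nu)^{-1}\sigma_k\phi_\eta(\nu)=\sigma_{\nu^{-1}k}$ one gets that conjugation by $\sigma_k$ sends $\phi_\eta(\nu)\sigma_h$ to $\phi_\eta(\nu)\sigma_{h+(\nu^{-1}-1)k}$. As $k$ runs through $\h_\Lambda$ the vector $(\nu^{-1}-1)k$ fills out the non-fixed space $\im(\nu^{-1}-1)=(1-\pi_\nu)\h_\Lambda$, while $\sigma_\lambda=\id$ for $\lambda\in\Lambda$ allows adding any lattice vector. I would then check that these two moves generate exactly the fibres of the projection $\h_\Lambda/\Lambda\to\pi_\nu(\h_\Lambda)/\pi_\nu(\Lambda)$, $h\mapsto\pi_\nu(h)$: indeed $\pi_\nu(h)\equiv\pi_\nu(h')\bpmod{\pi_\nu(\Lambda)}$ holds if and only if $h-h'\equiv(\nu^{-1}-1)k\bpmod{\Lambda}$ for some $k\in\h_\Lambda$ (apply $\pi_\nu$ for one direction, and use $\h_\Lambda=\pi_\nu\h_\Lambda\oplus\im(\nu^{-1}-1)$ for the other). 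Hence, after $K$-conjugation, the fibre over $\nu$ is parametrised by $\pi_\nu(\h_\Lambda)/\pi_\nu(\Lambda)$, and on the fixed space $\phi_\eta(\nu)$ commutes with $\sigma_{\pi_\nu(h)}$.

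It then remains to compare two representatives $\phi_\eta(\nu)\sigma_h$ and $\phi_\eta(\nu)\sigma_{h'}$ with $h,h'\in\pi_\nu(\h_\Lambda)$. Only conjugations whose image lies in $C_{\O(\Lambda)}(\nu)$ preserve the $\O(\Lambda)$-component $\nu$, so I conjugate by a general $\phi_\eta(\mu)\sigma_k$ with $\mu\in C_{\O(\Lambda)}(\nu)$. A short computation, using $\phi_\eta(\mu)\sigma_h\phi_\eta(\mu)^{-1}=\sigma_{\mu h}$ and collecting the $K$-terms, yields that $h$ is sent to $\mu h+d(\mu,\nu)$ modulo $(\nu^{-1}-1)\h_\Lambda+\Lambda$, where $d(\mu,\nu)=\phi_\eta(\nu)^{-1}\phi_\eta(\mu)\phi_\eta(\nu)\phi_\eta(\mu)^{-1}$. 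Here $\mu h$ is the natural linear action of the centraliser, which preserves $\pi_\nu(\h_\Lambda)$ and $\pi_\nu(\Lambda)$ because $\mu$ commutes with $\nu$ (hence with $\pi_\nu$) and fixes $\Lambda$.

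The hard part will be to show that the affine correction $d(\mu,\nu)$ is trivial in $\pi_\nu(\h_\Lambda)/\pi_\nu(\Lambda)$, so that the centraliser acts by the linear action alone and the classes in the fibre correspond exactly to the $C_{\O(\Lambda)}(\nu)$-orbits $H_\nu$. Since $\mu$ and $\nu$ commute, $d(\mu,\nu)$ projects to the identity in $\O(\Lambda)$ and lies in $\Hom(\Lambda,\{\pm1\})$, so $d(\mu,\nu)=\sigma_{k_0}$ for some $k_0\in\tfrac12\Lambda$ with associated character $\chi\colon\Lambda\to\{\pm1\}$. Because $\Lambda$ is unimodular one has $(\Lambda^\nu)^\ast=\pi_\nu(\Lambda)$, so the required vanishing $\pi_\nu(k_0)\in\pi_\nu(\Lambda)$ is equivalent to $\chi|_{\Lambda^\nu}=1$. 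This is exactly where the standard-lift hypothesis enters: for $\alpha\in\Lambda^\nu$ one has $\phi_\eta(\nu)^{\pm1}\ee_\alpha=\ee_\alpha$ since $\eta$ is trivial on $\Lambda^\nu$, and $\mu$ preserves $\Lambda^\nu$, so upon evaluating $d(\mu,\nu)$ on $\ee_\alpha$ the sign contributions of $\phi_\eta(\mu)^{-1}$ and $\phi_\eta(\mu)$ cancel and $d(\mu,\nu)\ee_\alpha=\ee_\alpha$. Granting this, the map $(\nu,h)\mapsto\phi_\eta(\nu)\sigma_h$ is surjective onto conjugacy classes by the reductions above and injective because distinct $\nu\in N$ project to distinct $\O(\Lambda)$-classes while for fixed $\nu$ the fibre classes biject with $H_\nu$.
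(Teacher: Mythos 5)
Your proof is correct, but note that the present paper contains no proof of this proposition at all: it is quoted verbatim from \cite{MS19}, so there is nothing in the text to compare against except that reference. Your route --- work fibrewise along the exact sequence $1\to K\to\Aut(V_\Lambda)\to\O(\Lambda)\to1$, normalise the $\O(\Lambda)$-component into $N$, reduce the $K$-part via $\sigma_k\phi_\eta(\nu)\sigma_h\sigma_k^{-1}=\phi_\eta(\nu)\sigma_{h+(\nu^{-1}-1)k}$ and $\sigma_\lambda=\id$ for $\lambda\in\Lambda$ to the parameter $\pi_\nu(h)$ modulo $\pi_\nu(\Lambda)$, and then show that the residual action of $C_{\O(\Lambda)}(\nu)$ on $\pi_\nu(\h_\Lambda)/\pi_\nu(\Lambda)$ is the linear one --- is the natural argument and is, in substance, how this statement is established in the cited work. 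The individual steps all check out: the identification of the kernel of $h\mapsto\pi_\nu(h)$ modulo $\pi_\nu(\Lambda)$ with $(\nu^{-1}-1)\h_\Lambda+\Lambda$, the conjugation formula producing the correction term $d(\mu,\nu)\in K\cap\O(\hat\Lambda)=\Hom(\Lambda,\{\pm1\})$, the equivalence of $\pi_\nu(k_0)\in\pi_\nu(\Lambda)$ with triviality of the associated character on $\Lambda^\nu$ via $(\Lambda^\nu)'=\pi_\nu(\Lambda)$ (unimodularity of $\Lambda$), and the sign cancellation $d(\mu,\nu)\ee_\alpha=\eta_\nu(\mu^{-1}\alpha)\eta_\nu(\alpha)^{-1}\ee_\alpha=\ee_\alpha$ for $\alpha\in\Lambda^\nu$.

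One point is worth recording explicitly. You correctly identified that the vanishing of the affine correction $d(\mu,\nu)$ is the only delicate step, and that it is exactly here that the standard-lift property of $\phi_\eta(\nu)$ is used. The paper's phrasing ``Choose a section $\nu\mapsto\phi_\eta(\nu)$'' before the proposition leaves this hypothesis implicit (it is only made explicit before the subsequent proposition on classes of fixed order), but it is genuinely needed: if one replaces a standard lift by $\phi_\eta(\nu)\sigma_c$ with $c\in\tfrac12\Lambda$, your own computation shows that the centraliser then acts by the twisted affine action $h\mapsto\mu h+(\mu-\id)\pi_\nu(c)$ modulo $\pi_\nu(\Lambda)$, whose orbits are translates of the linear orbits, so that the map on a set of representatives of the \emph{linear} action need no longer be injective. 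So your proof proves the intended (standard-lift) statement, and in doing so makes visible a hypothesis the surrounding text only suggests.
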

Since $h\in\pi_\nu(\h_\Lambda)$, $\phi_\eta(\nu)$ and $\sigma_h$ commute. The automorphism $\phi_\eta(\nu)\sigma_h$ in $\Aut(V_\Lambda)$ has finite order if and only if $h$ is in $\pi_\nu(\Lambda\otimes_\Z\Q)$.

\medskip

We also describe the conjugacy classes in $\Aut(V_\Lambda)$ of a given finite order~$n$. First note that a standard lift $\phi_\eta(\nu)$ of $\nu$ has order $m =|\nu|$ if $m$ is odd or if $m$ is even and $\langle\alpha,\nu^{m/2}\alpha\rangle\in 2\Z$ for all $\alpha\in\Lambda$, and order $2m$ otherwise. In the latter case we say that $\nu$ exhibits \emph{order doubling}. Then $\phi_\eta(\nu)^m\ee_\alpha=(-1)^{m\langle\pi_\nu(\alpha),\pi_\nu(\alpha)\rangle}\ee_\alpha=(-1)^{\langle\alpha,\nu^{m/2}\alpha\rangle}\ee_\alpha$ for all $\alpha\in\Lambda$. Note that the map sending $\alpha$ to $m\langle\pi_\nu(\alpha),\pi_\nu(\alpha)\rangle=\langle\alpha,\nu^{m/2}\alpha\rangle \bpmod 2$ defines a homomorphism $\Lambda\to\Z_2$.

Let $\phi_\eta(\nu)$ be a standard lift. If $\nu$ exhibits order doubling, then there exists a vector $s_\nu\in(1/2m)\Lambda^\nu$ defining an inner automorphism $\sigma_{s_\nu}=\e^{2\pi\i(s_\nu)_0}$ of order $2m$ such that $\phi_\eta(\nu)\sigma_{s_\nu}$ has order $m$. If $\nu$ does not exhibit order doubling, we set $s_\nu=0$. Then the order of an automorphism $\phi_\eta(\nu)\sigma_{s_{\nu}+f}$ for $f\in\Lambda\otimes_\Z\Q$ is given by $\lcm(m,k)$ where $k$ is the smallest positive integer such that $kf$ is in $\Lambda$ or equivalently in the fixed-point lattice $\Lambda^\nu$.

For convenience, we define the $s_\nu$-shifted action of $C_{\O(\Lambda)}(\nu)$ on $\pi_\nu(\h_\Lambda)$ by
\begin{equation*}
\tau.f=\tau f+(\tau-\id)s_\nu
\end{equation*}
for all $\tau\in C_{\O(\Lambda)}(\nu)$ and $f\in\pi_\nu(\h_\Lambda)$. Then:
\begin{prop}[\cite{ELMS21}]\label{prop:leechconj2}
Fix a section $\nu\mapsto\phi_\eta(\nu)$ mapping only to standard lifts. A complete system of representatives for the conjugacy classes of automorphisms in $\Aut(V_\Lambda)$ of order $n$ is given by the $\phi_\eta(\nu)\sigma_{s_{\nu}+f}$ where
\begin{enumerate}
\item $\nu$ is from the representatives in $N\subseteq\O(\Lambda)$ of order $m$ dividing $n$,
\item $f$ is from the orbit representatives of the $s_\nu$-shifted action of the centraliser $C_{\O(\Lambda)}(\nu)$ on $(\Lambda^\nu/n)/\pi_{\nu}(\Lambda)$
\end{enumerate}
such that $\lcm(m,|\sigma_f|)=n$.
\end{prop}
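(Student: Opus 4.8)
The plan is to deduce the order-$n$ classification from the general one in \autoref{prop:leechconj} by first restricting to finite-order classes and then reparametrising the inner part so as to absorb the order-doubling shift $s_\nu$. Choosing standard lifts (all of which are conjugate by \cite{EMS20a}), \autoref{prop:leechconj} represents every conjugacy class of $\Aut(V_\Lambda)$ by some $\phi_\eta(\nu)\sigma_h$ with $\nu\in N$ and $h\in\pi_\nu(\h_\Lambda)$, unique up to the natural action $h\mapsto\tau h$ of $C_{\O(\Lambda)}(\nu)$ on $\pi_\nu(\h_\Lambda)/\pi_\nu(\Lambda)$. Since $h\in\h_\Lambda^\nu$, the commuting factors satisfy $\phi_\eta(\nu)\sigma_h\phi_\eta(\nu)^{-1}=\sigma_{\nu h}=\sigma_h$, so the class has finite order exactly when $\sigma_h$ does, i.e.\ when $h\in\pi_\nu(\Lambda\otimes_\Z\Q)$ as recorded above. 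This already cuts the classification down to $\nu$ of finite order $m=|\nu|$ together with $C_{\O(\Lambda)}(\nu)$-orbits on the torsion group $\pi_\nu(\Lambda\otimes_\Z\Q)/\pi_\nu(\Lambda)$.

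Next I would pass to the shifted coordinate $f:=h-s_\nu$, where $s_\nu\in(1/2m)\Lambda^\nu$ is the vector making $\phi_\eta(\nu)\sigma_{s_\nu}$ have order exactly $m$ when $\nu$ exhibits order doubling (and $s_\nu=0$ otherwise). Writing $\phi_\eta(\nu)\sigma_h=(\phi_\eta(\nu)\sigma_{s_\nu})\sigma_f$ as a product of commuting automorphisms of orders $m$ and $|\sigma_f|$, its order equals $\lcm(m,|\sigma_f|)$, where $|\sigma_f|$ is the least positive integer $k$ with $kf\in\Lambda^\nu$. Imposing $\lcm(m,|\sigma_f|)=n$ then singles out the classes of order exactly $n$; the weaker condition $|\sigma_f|\mid n$, equivalent to $nf\in\Lambda^\nu$, is precisely membership of $f$ in $(\Lambda^\nu/n)/\pi_\nu(\Lambda)$ (note $\pi_\nu(\Lambda)\subseteq(1/m)\Lambda^\nu\subseteq(1/n)\Lambda^\nu$ once $m\mid n$). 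Finally, the change of variables $h\mapsto f$ intertwines the plain action with the $s_\nu$-shifted one, since $\tau h-s_\nu=\tau f+(\tau-\id)s_\nu=\tau.f$; hence the $C_{\O(\Lambda)}(\nu)$-orbits in the $h$-picture correspond bijectively to the $s_\nu$-shifted orbits in the $f$-picture. Combining these bijections with \autoref{prop:leechconj} yields the asserted system of representatives.

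I expect the main obstacle to lie in two intertwined points of the middle step. First, the order-doubling bookkeeping: one must establish the existence of $s_\nu\in(1/2m)\Lambda^\nu$ making $\phi_\eta(\nu)\sigma_{s_\nu}$ of order exactly $m$, using that $\alpha\mapsto\langle\alpha,\nu^{m/2}\alpha\rangle\bmod 2$ is a homomorphism $\Lambda\to\Z_2$, so that the obstruction is realised by pairing with a suitable half-integral fixed vector. Second, and more delicate, is controlling the conjugation action in the standard-lift section and verifying that, after the $s_\nu$-reparametrisation, it is exactly the stated shifted action $\tau.f=\tau f+(\tau-\id)s_\nu$ and descends to $(\Lambda^\nu/n)/\pi_\nu(\Lambda)$: conjugating a standard lift of $\nu$ by a lift of $\tau\in C_{\O(\Lambda)}(\nu)$ produces a priori a correction $\phi_\eta(\nu)\sigma_c$, and one must show this correction is absorbed modulo $\pi_\nu(\Lambda)$, using that $\tau$ commutes with $\pi_\nu$ and preserves $\Lambda$. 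Granting these, completeness and irredundancy are inherited directly from \autoref{prop:leechconj}, since the projection $\phi_\eta(\nu)\sigma_h\mapsto\nu$ separates distinct $\nu\in N$ and the transported orbit structure separates distinct $f$.
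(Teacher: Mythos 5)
The paper itself gives no proof of this proposition---it is imported verbatim from [ELMS21]---so there is no internal argument to compare against; what the paper does supply, in the paragraphs immediately preceding the statement, are precisely the ingredients you use: \autoref{prop:leechconj}, the existence of $s_\nu$, the order formula $|\phi_\eta(\nu)\sigma_{s_\nu+f}|=\lcm(m,|\sigma_f|)$, and the definition of the shifted action. Your assembly of these---restrict \autoref{prop:leechconj} (taken with a standard-lift section) to finite-order classes, translate by $s_\nu$, note that translation intertwines the plain action on $h$ with the shifted action on $f$, and impose the order condition---is the natural and, as far as one can tell, the intended argument, and its overall structure is sound.

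One step, however, rests on a false general principle as you state it: you justify $|(\phi_\eta(\nu)\sigma_{s_\nu})\sigma_f|=\lcm(m,|\sigma_f|)$ by saying it is ``a product of commuting automorphisms of orders $m$ and $|\sigma_f|$''. Commuting elements of orders $a$ and $b$ only have product of order \emph{dividing} $\lcm(a,b)$ (consider $x\cdot x^{-1}$), so this needs the extra input that the two cyclic groups intersect trivially. That holds here because under the projection $\Aut(V_\Lambda)\to\O(\Lambda)$ the factor $\sigma_f$ maps to $\id$ while $\phi_\eta(\nu)\sigma_{s_\nu}$ maps to $\nu$, whose order $m$ equals that of $\phi_\eta(\nu)\sigma_{s_\nu}$ by the choice of $s_\nu$; hence $(\phi_\eta(\nu)\sigma_{s_\nu}\sigma_f)^N=\id$ forces $m\mid N$ and then $|\sigma_f|\mid N$. (Alternatively, simply cite the order formula the paper records, rather than re-deriving it.) A second, smaller point you leave implicit: the shifted action must be checked to preserve the subset $(\Lambda^\nu/n)/\pi_\nu(\Lambda)$, since the affine correction $(\tau-\id)s_\nu$ a priori only lies in $(1/2m)\Lambda^\nu$, and likewise the side condition $\lcm(m,|\sigma_f|)=n$ must be seen to be independent of the representative $f$ of its orbit and coset. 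Both follow from conjugation-invariance of the order together with the order formula (any representative $f'$ of a conjugate class satisfies $|\sigma_{f'}|\mid\lcm(m,|\sigma_{f'}|)=\lcm(m,|\sigma_f|)\mid n$, hence lies in $\Lambda^\nu/n$), so these are repairs rather than obstructions; with them your argument is complete.
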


\medskip

We conclude this section by recalling some results on the twisted modules of lattice \voa{}s. For a standard lift $\phi_\eta(\nu)$ the irreducible $\phi_\eta(\nu)$-twisted modules of a lattice \voa{} $V_L$ are described in \cite{DL96,BK04}. Together with the results in \cite{Li96} this allows us to describe the irreducible $g$-twisted $V_L$-modules for all finite-order automorphisms $g\in\Aut(V_L)$.

For simplicity, let $L$ be unimodular. Then $V_L$ is holomorphic and there is a unique irreducible $g$-twisted $V_L$-module $V_L(g)$ for each $g\in\Aut(V_L)$ of finite order. Let $g=\phi_\eta(\nu)\sigma_h$ for some standard lift $\phi_\eta(\nu)$ and $\sigma_h=\e^{2\pi\i h_0}$ for some $h\in\pi_\nu(L\otimes_\Z\Q)$. Then
\begin{equation*}
V_L(g)=M(1)[\nu]\otimes\C[-h+\pi_\nu(L)]\otimes\C^{d(\nu)}
\end{equation*}
with the twisted Heisenberg module $M(1)[\nu]$, grading by the lattice coset $-h+\pi_\nu(L)$ and defect $d(\nu)\in\Ns$. (The minus sign in $-h+\pi_{\nu}(\Lambda)$ has to do with the sign convention in the definition of twisted modules. Here, we follow the convention in, e.g., \cite{DLM00} as opposed to some older texts.)

Assume that $\nu$ has order~$m$ and cycle shape $\prod_{t\mid m}t^{b_t}$ with $b_t\in\Z$, i.e.\ the extension of $\nu$ to $\h_L$ has characteristic polynomial $\prod_{t\mid m}(x^t-1)^{b_t}$. Then the conformal weight of $V_L(g)$ is given by
\begin{equation*}
\rho(V_L(g))=\frac{1}{24}\sum_{t\mid m}b_t\left(t-\frac{1}{t}\right)+\min_{\alpha\in-h+\pi_{\nu}(L)}\frac{\langle\alpha,\alpha\rangle}{2}\geq0,
\end{equation*}
where $\rho_\nu =\frac{1}{24}\sum_{t\mid m}b_t\left(t-\frac{1}{t}\right)$ is called the \emph{vacuum anomaly} of $V_L(g)$ \cite{DL96}. Note that $\rho_\nu$ is positive for $\nu\neq\id$. The second term is half of the norm of a shortest vector in the lattice coset $-h+\pi_\nu(L)$.


\section{Holomorphic \VOA{}s of Central Charge 24}\label{sec:24}

In this section we recall the notion of the affine structure of a \strathol{} \voa{} of central charge $24$ and describe the bijection between these \voa{}s and the \gdh{}s of the Leech lattice \voa{} \cite{MS23}.


\subsection{Affine Structure}\label{sec:affine}

Let $V=\bigoplus_{n=0}^\infty V_n$ be a \voa{} of CFT-type. Then the zero modes
\begin{equation*}
[a,b]\coloneqq a_0b
\end{equation*}
for $a,b\in V_1$ endow the weight-$1$ space $V_1$ with the structure of a finite-dimensional Lie algebra. Moreover, the zero modes $a_0$ for $a\in V_1$ equip each $V$-module with an action of this Lie algebra.

If $g\in\Aut(V)$ is an automorphism of the \voa{} $V$, fixing the vacuum vector $\vac\in V_0$ and the Virasoro vector $\omega\in V_2$ by definition, then the restriction of $g$ to $V_1$ is a Lie algebra automorphism, possibly of smaller order.

If $V$ is also self-contragredient, then there exists a non-degenerate, invariant bilinear form $\langle\cdot,\cdot\rangle$ on $V$, which is unique up to a non-zero scalar and symmetric \cite{FHL93,Li94}. We normalise this bilinear form such that $\langle\vac,\vac\rangle=-1$. Then $a_1b=b_1a=\langle a,b\rangle\vac$ for all for $a,b\in V_1$.

\medskip

Let $\g$ be a simple, finite-dimensional Lie algebra with the non-degenerate, invariant bilinear form $(\cdot,\cdot)$ normalised such that $(\alpha,\alpha)=2$ for all long roots $\alpha$. The affine Kac-Moody algebra $\hat\g$ associated with $\g$ is the Lie algebra $\hat\g\coloneqq\g\otimes\C[t,t^{-1}]\oplus\C K$ with central element $K$ and Lie bracket
\begin{equation*}
[a\otimes t^m,b\otimes t^n]\coloneqq[a,b]\otimes t^{m+n}+m(a,b)\delta_{m+n,0}K
\end{equation*}
for $a,b\in\g$, $m,n\in\Z$.

A $\hat\g$-module is said to have level $k\in\C$ if $K$ acts as $k\id$. Let $\lambda\in P_+$ be a dominant integral weight and $k\in\C$. Then we denote by $L_{\hat\g}(k,\lambda)$ the irreducible quotient of the $\hat\g$-module of level $k$ induced from the irreducible highest-weight $\g$-module $L_\g(\lambda)$ (see, e.g., \cite{Kac90}).

For a positive integer $k\in\Ns$, $L_{\hat\g}(k,0)$ admits the structure of a rational \voa{}, called the simple affine \voa{} of level $k$, whose irreducible modules are given by the modules $L_{\hat\g}(k,\lambda)$ for $\lambda\in P_+^k$, the subset of the dominant integral weights $P_+$ of level at most $k$ \cite{FZ92}.

If $V$ is a self-contragredient \voa{} of CFT-type, the commutator formula implies that the modes satisfy
\begin{equation*}
[a_m,b_n]=(a_0b)_{m+n}+m(a_1b)_{m+n-1}=[a,b]_{m+n}+m\langle a,b\rangle\delta_{m+n,0}\id_V
\end{equation*}
for all $a,b\in V_1$, $m,n\in\Z$. Comparing this with the definition above we see that for a simple Lie subalgebra $\g$ of $V_1$ the map $a\otimes t^n\mapsto a_n$ for $a\in\g$ and $n\in\Z$ defines a representation of $\hat\g$ on $V$ of some level $k_\g\in\C$ with $\langle\cdot,\cdot\rangle|_\g=k_\g(\cdot,\cdot)$.

\medskip

Suppose that $V$ is \strat{}. Then it is shown in \cite{DM04b} that the Lie algebra $V_1$ is reductive, i.e.\ a direct sum of a semisimple and an abelian Lie algebra. Moreover, it is stated in \cite{DM06b} that for a simple Lie subalgebra $\g$ of $V_1$ the restriction of $\langle\cdot,\cdot\rangle$ to $\g$ is non-degenerate, the level $k_\g$ is a positive integer, the \vosa{} of $V$ generated by $\g$ is isomorphic to $L_{\hat\g}(k_\g,0)$ and $V$ is an integrable $\hat\g$-module.

Assume in addition that $V$ is holomorphic and of central charge $24$. Then the Lie algebra $V_1$ is zero, abelian of dimension $24$ or semisimple of rank at most $24$ \cite{DM04}. If the Lie algebra $V_1$ is semisimple, then it decomposes into a direct sum
\begin{equation*}
V_1\cong\g_1\oplus\ldots\oplus\g_r
\end{equation*}
of simple ideals $\g_i$ and the \vosa{} $\langle V_1\rangle$ of $V$ generated by $V_1$ is isomorphic to the tensor product of simple affine \voa{}s
\begin{equation*}
\langle V_1\rangle\cong L_{\hat\g_1}(k_1,0)\otimes\ldots\otimes L_{\hat\g_r}(k_r,0)
\end{equation*}
with levels $k_i\coloneqq k_{\g_i}\in\Ns$ and has the same Virasoro vector as $V$. The tensor-product decomposition of the \voa{} $\langle V_1\rangle$ is called the \emph{affine structure} of $V$ and denoted by $\g_{1,k_1}\ldots\g_{r,k_r}$.

Since $\langle V_1\rangle\cong L_{\hat\g_1}(k_1,0)\otimes\ldots\otimes L_{\hat\g_r}(k_r,0)$ is rational, $V$ decomposes into the direct sum of finitely many irreducible $\langle V_1\rangle$-modules
\begin{equation*}
V\cong\bigoplus_\lambda m_\lambda L_{\hat\g_1}(k_1,\lambda_1)\otimes\ldots\otimes L_{\hat\g_r}(k_r,\lambda_r)
\end{equation*}
where $m_\lambda\in\N$ and the sum ranges over finitely many $\lambda=(\lambda_1,\ldots,\lambda_r)$ with dominant integral weights $\lambda_i\in P_+^{k_i}(\g_i)$, i.e.\ of level at most $k_i$.

\medskip

Let $h_i^\vee$ denote the dual Coxeter number of $\g_i$. The fact that the character of $V$ is a Jacobi form of lattice index implies the trace identity
\begin{equation}\label{eq:221}\tag{S}
\frac{h_i^\vee}{k_i}=\frac{\dim(V_1)-24}{24}
\end{equation}
for all $i=1,\ldots,r$ (see \cite{Sch93,DM04,EMS20a}). As a consequence, the Lie algebra $V_1$ uniquely determines the affine structure, i.e.\ the levels $k_i$. The equation has exactly $221$ solutions (see Table~3 in \cite{ELMS21}).

In \cite{Sch93} Schellekens also derived so-called higher-order trace identities (cf.\ \cite{EMS20a}, Theorem~6.1), which allowed him to reduce the above $221$ affine structures down to $69$ by solving large integer linear programming problems on the computer. Together with the zero Lie algebra and the $24$-dimensional abelian Lie algebra this gives Schellekens' list of $71$ Lie algebras (see \autoref{table:70}) that occur as the weight-$1$ space of a \strathol{} \voa{} of central charge $24$ \cite{Sch93}.

\medskip

We shall however not make use of Schellekens' classification result but give an independent proof based on the classification of certain geometric structures in the Leech lattice $\Lambda$.


\subsection{\GDH{}s}

One of the main results of \cite{MS23} is a dimension formula for the weight-$1$ space of the cyclic orbifold construction $V^{\orb(g)}$.
\begin{thm}[Dimension Formula, \cite{MS23}, Theorem~5.3 and Corollary~5.7]
Let $V$ be a \strathol{} \voa{} of central charge $24$ and $g$ an automorphism of $V$ of finite order~$n>1$ and type~$0$ such that $V^g$ satisfies the positivity condition. Then the dimension of the weight-$1$ subspace of $V^{\orb(g)}$ is
\begin{equation*}
\dim(V_1^{\orb(g)})=24+\sum_{d\mid n}c_n(d)\dim(V_1^{g^d})-R(g)
\end{equation*}
where the $c_n(d)\in\Q$ are defined by $\sum_{d\mid n}c_n(d)(t,d)=n/t$ for all $t\mid n$ and the remainder term $R(g)$ is non-negative. In particular,
\begin{equation*}
\dim(V_1^{\orb(g)})\leq24+\sum_{d\mid n}c_n(d)\dim(V_1^{g^d}).
\end{equation*}
\end{thm}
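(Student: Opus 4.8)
The plan is to realise the weight-$1$ dimension of the orbifold as a single Fourier coefficient of a modular-invariant character and to evaluate that coefficient by pairing against a weight-$2$ Eisenstein series. Because the central charge is $24$, the constant term of any character $\tr\,q^{L_0-1}$ records the dimension of the weight-$1$ space. First I would write
\[
\dim(V_1^{\orb(g)})=\sum_{i\in\Z_n}\dim\bigl((W^{(i,0)})_1\bigr),
\]
and recover the eigenspace characters from the twisted-twining characters $T_{(i,j)}(\tau)=\tr_{V(g^i)}\phi_i(g)^j\,q^{L_0-1}$ via the discrete Fourier transform over $\Z_n$. Since $\ch_{W^{(i,0)}}=\tfrac1n\sum_{j}T_{(i,j)}$, summing over $i$ exhibits $\ch_{V^{\orb(g)}}$ as the full average $\tfrac1n\sum_{i,j}T_{(i,j)}$, which is manifestly invariant under $\SLZ$, as it must be for a holomorphic \voa{}.

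The next step is to record the modular behaviour of the $T_{(i,j)}$. For \strat{} $V$ these form a vector-valued modular form of weight $0$ for $\SLZ$, on which the generators $S$ and $T$ permute the index $(i,j)\in\Z_n\times\Z_n$ according to $(i,j)\mapsto(j,-i)$ and $(i,j)\mapsto(i,i+j)$; the potential multiplier is trivial precisely because $g$ has type~$0$. I would then construct the vector-valued Eisenstein series $\mathcal{E}(\tau)$ of weight $2$ attached to the dual of this representation, compute its Fourier coefficients explicitly, and read off that its leading coefficients produce the rational numbers $c_n(d)$ characterised by the divisor relation $\sum_{d\mid n}c_n(d)(t,d)=n/t$.

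With both objects in hand, the heart of the argument is the pairing. The product of the weight-$0$ character with $\mathcal{E}$ is a weight-$2$ object whose regularised integral over a fundamental domain can be evaluated by unfolding. On one side the single pole $q^{-1}$ of $\ch_{V^{\orb(g)}}$ meets the constant term of $\mathcal{E}$ and contributes the universal constant $24$; on the other side the constant terms of the various $T_{(i,j)}$, that is the dimensions $\dim(V_1^{g^d})$ grouped by the order of $g^d$, are weighted exactly by the $c_n(d)$. Every remaining Fourier coefficient of the characters — those attached to states of conformal weight strictly below $1$ in the modules $W^{(i,j)}$ — pairs against a nonconstant coefficient of $\mathcal{E}$ and is collected into the rest term $R(g)$. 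As these are dimensions of genuine weight spaces and the relevant Eisenstein coefficients are nonnegative, one obtains $R(g)\ge0$ and hence the asserted inequality.

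The hard part is making this last step rigorous. The weight-$2$ Eisenstein series is only quasimodular, so its non-holomorphic completion must be carried along and shown not to corrupt the final identity; and since the product of a form with a pole and a non-cuspidal Eisenstein series is not integrable, the integral must be regularised in Rankin--Selberg style with its boundary terms tracked. The genuinely delicate point is sign control: one must verify that the Eisenstein coefficients multiplying the sub-leading character coefficients are all nonnegative, so that $R(g)$, a priori a signed sum, is in fact a nonnegative combination of dimensions. This nonnegativity is exactly where the positivity condition on $V^g$ enters, and isolating precisely which coefficients feed into $R(g)$, with the correct signs, is the main technical obstacle.
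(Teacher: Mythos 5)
Your outline does match the strategy of \cite{MS19} (which is where this paper takes the theorem from): the twisted--twining characters $T_{(i,j)}$ form a weight-$0$ vector-valued modular form for a representation of $\SLZ$ on $\C[\Z_n\times\Z_n]$, with trivial multiplier because $g$ has type~$0$, and the formula comes from pairing this vector with a weight-$2$ vector-valued Eisenstein series $\mathcal{E}$ for the dual representation. But the mechanism you propose for evaluating the pairing is wrong, and it is also unnecessary. A weight-$2$ form is naturally a $1$-form $f(\tau)\,d\tau$, not something one integrates over the two-dimensional fundamental domain, and no Rankin--Selberg regularisation or unfolding enters. The actual mechanism is elementary and purely algebraic: the pairing $f=\sum_{(i,j)}T_{(i,j)}\mathcal{E}_{(i,j)}$ is a scalar meromorphic modular form of weight $2$ for the full modular group, holomorphic on $\H$, with a pole only at the cusp; hence $f(\tau)\,d\tau$ descends to a meromorphic differential on the compact modular curve and the residue theorem forces its constant Fourier coefficient to vanish. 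Writing out that constant term \emph{is} the dimension formula. (Note also a bookkeeping slip in your sketch: in such a pairing the polar coefficient $q^{-1}$ of the characters meets the $q^{+1}$-coefficient of $\mathcal{E}$ --- that is where the $24$ comes from --- while the constant terms of $\mathcal{E}$ multiply the constant terms of the characters and produce the $c_n(d)$-weighted sum; a pole cannot pair with the constant term of $\mathcal{E}$ in a constant-term extraction.)

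More importantly, the two items you defer as ``the main technical obstacle'' are precisely the substance of the proof, so the proposal has genuine gaps rather than merely rough edges. First, a \emph{holomorphic} vector-valued Eisenstein series of weight $2$ for this representation does not exist off the shelf: the natural candidates are only quasimodular, and one must take the specific linear combination whose non-holomorphic anomalies cancel. It is exactly this construction, together with the explicit computation of its Fourier expansion, that produces constants satisfying $\sum_{d\mid n}c_n(d)(t,d)=n/t$; they pin down \emph{which} Eisenstein series one is allowed to pair with, so they cannot simply be ``read off'' at the end as your sketch suggests. Second, $R(g)\geq0$ does not follow formally from the positivity condition: positivity ensures that the characters of the modules $W^{(i,j)}$ with $(i,j)\neq(0,0)$ have no polar terms, so that the identity has the stated shape and $R(g)$ only involves weight spaces of conformal weight in $(0,1)$; but the non-negativity of $R(g)$ additionally requires the explicitly computed Fourier coefficients of $\mathcal{E}$ (divisor-type sums) to be non-negative, which is a separate verification (Corollary~5.7 in \cite{MS19}). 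As it stands, your plan names the right objects but replaces the simple residue-theorem argument by an analytic scheme that would not work as described, and leaves unproved the two facts on which the statement rests.
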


This dimension formula is obtained by pairing the vector-valued character of the \fpvosa{} $V^g$ with a vector-valued Eisenstein series of weight $2$, and it generalises earlier results in \cite{Mon94,LS19,Moe16,EMS20b} under the assumption that the modular curve $\Gamma_0(n)\backslash\H^*$ has genus zero.

We point out that the upper bound in the dimension formula depends only on the action of $g$ on the weight-$1$ Lie algebra $V_1$.

An automorphism $g$ such that $\dim(V^{\orb(g)}_1)$ attains the above upper bound is called \emph{extremal}. We also call the identity automorphism extremal.

The upper bound in the dimension formula motivates the following definition.
\begin{defi}[\GDH{}, \cite{MS23}]\label{def:gendeephole}
Let $V$ be a \strathol{} \voa{} of central charge $24$ and $g\in\Aut(V)$ of finite order $n>1$. Suppose $g$ has type~$0$ and $V^g$ satisfies the positivity condition. Then $g$ is called a \emph{\gdh} of $V$ if
\begin{enumerate}
\item $g$ is extremal, i.e.\ $\dim(V_1^{\orb(g)})=24+\sum_{d\mid n}c_n(d)\dim(V_1^{g^d})$,
\item $\rk(V_1^{\orb(g)})=\rk(V_1^g)$.
\end{enumerate}
\end{defi}
In other words, we demand the dimension of the Lie algebra $V_1^{\orb(g)}$ to be maximal with respect to the upper bound from the dimension formula and the rank to be minimal with respect to the obvious lower bound $\rk(V_1^g)$.

By convention, we call the identity automorphism a \gdh{}.

Recall that the Lie algebras $V_1^g$ and $V_1^{\orb(g)}$ are reductive. By Lemma~8.1 in \cite{Kac90} the centraliser in $V_1^{\orb(g)}$ of any choice of Cartan subalgebra of $V_1^g$ is a Cartan subalgebra of $V_1^{\orb(g)}$. Condition~$(2)$ is hence equivalent to demanding that any Cartan subalgebra of $V_1^g$ also be a Cartan subalgebra of $V_1^{\orb(g)}$. It can be replaced by the equivalent condition that the inverse-orbifold automorphism restricts to an inner automorphism on $V_1^{\orb(g)}$.

If $V\cong V_\Lambda$, the \voa{} associated with the Leech lattice $\Lambda$, then the rank condition is equivalent to demanding that $(V_\Lambda^g)_1$, which as a subalgebra of $(V_\Lambda)_1$ is abelian, be a Cartan subalgebra of $(V_\Lambda^{\orb(g)})_1$.

The second main result of \cite{MS23} is a natural bijection between the \gdh{}s of the Leech lattice \voa{} $V_\Lambda$ and the \strathol{} \voa{}s of central charge $24$ with non-vanishing weight-$1$ space.
\begin{thm}[Holey Correspondence, \cite{MS23}]\label{thm:bij}
The cyclic orbifold construction $g\mapsto V_\Lambda^{\orb(g)}$ defines a bijection between the algebraic conjugacy classes of \gdh{}s $g\in\Aut(V_\Lambda)$ with $\rk((V_\Lambda^g)_1)>0$ and the isomorphism classes of \strathol{} \voa{}s $V$ of central charge $24$ with $V_1\neq\{0\}$.
\end{thm}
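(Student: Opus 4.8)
The plan is to build the inverse map at the same time as the forward map, using the reverse orbifold construction from \autoref{sec:voaaut} as the bookkeeping device and an averaged form of Kac's very strange formula as the computational engine. I would split the argument into well-definedness, injectivity and surjectivity, the last being the essential point.

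\emph{Well-definedness and target.} Let $g$ be a \gdh{} of order $n$ with $\rk((V_\Lambda^g)_1)>0$. The orbifold results recalled in \autoref{sec:voaaut} show that $V_\Lambda^g$ is \strat{} and satisfies the positivity condition, so $V:=V_\Lambda^{\orb(g)}$ is a \strathol{} \voa{} of central charge $24$. Condition~(2) of \autoref{def:gendeephole} makes $(V_\Lambda^g)_1$ a Cartan subalgebra of $V_1$, whence $\rk(V_1)=\rk((V_\Lambda^g)_1)>0$ and $V_1\neq\{0\}$. The assignment descends to algebraic conjugacy classes: conjugating $g$ in $\Aut(V_\Lambda)$ yields an isomorphic orbifold, and replacing $g$ by another generator $g^a$ of $\langle g\rangle$ with $\gcd(a,n)=1$ merely permutes the sectors $W^{(i,0)}$ without changing the isomorphism type of their sum.

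\emph{Duality and injectivity.} On $V=V_\Lambda^{\orb(g)}$ there is the reverse automorphism $\zeta$ with $V^{\orb(\zeta)}=V_\Lambda$, and by the remark after \autoref{def:gendeephole} the rank condition is equivalent to $\zeta$ restricting to an \emph{inner} automorphism of $V_1$. Thus $g\mapsto(V,\zeta)$ attaches to each \gdh{} the target \voa{} together with an inner, order-$n$ automorphism orbifolding back to the rootless Leech \voa{}; since forward and reverse orbifold are mutually inverse on such pairs, $g$ is recovered up to algebraic conjugacy as the reverse automorphism of $\zeta$. Injectivity therefore reduces to a rigidity statement: the inner automorphism $\zeta$ with $V^{\orb(\zeta)}\cong V_\Lambda$ is unique up to conjugacy in $\Aut(V)$. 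This uniqueness and the existence needed for surjectivity are two faces of the same geometric analysis.

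\emph{Surjectivity — the main obstacle.} Given an arbitrary \strathol{} $V$ of central charge $24$ with $V_1\neq\{0\}$, I must produce an inner automorphism $h=\sigma_x$, with $x$ in a Cartan subalgebra of the reductive Lie algebra $V_1$, such that $V^{\orb(h)}\cong V_\Lambda$. Since a \strathol{} \voa{} of central charge $24$ whose weight-$1$ space is abelian of dimension $24$ is forced to be $V_\Lambda$ (being the lattice \voa{} of the rootless even unimodular lattice $\Lambda$, see \cite{DM04}), it suffices to find $x$ for which $(V^{\orb(h)})_1$ is abelian of full rank $24$: the surviving Cartan of $V_1$ must be enlarged by the twisted sectors to rank $24$ while all roots are removed. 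Such an $x$ is precisely a deep hole of the Lie-theoretic data of $V_1$, the exact analogue of a deep hole of a Niemeier lattice. The averaged very strange formula produces it: applied to $\sigma_x$ it rewrites $\dim((V^{\orb(h)})_1)$ in terms of the conformal weights $\rho(V(h^d))$, and the non-negativity of the rest term $R(h)$ in the dimension formula turns the requirement $\dim((V^{\orb(h)})_1)=24$ into a finite geometric extremal problem in the weight lattice of $V_1$. Solving it yields $x$; the reverse automorphism $\zeta$ of $h=\sigma_x$ on $V^{\orb(h)}=V_\Lambda$ then satisfies $V_\Lambda^{\orb(\zeta)}\cong V$ and, being the reverse of an inner automorphism, obeys the rank condition, while a second application of the very strange formula — now on $V_\Lambda$ — verifies that $\zeta$ is extremal. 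Hence $\zeta$ is a \gdh{} with $\rk((V_\Lambda^\zeta)_1)=\rk(V_1)>0$ mapping to $V$. The hard part is thus the existence and extremality of this deep-hole element, and it is here that the very strange formula does the decisive work, reducing an a priori infinite search to the finite geometry later exploited, on the Leech side, through the hole diagrams of \cite{CPS82}.
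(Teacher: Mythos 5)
Your architecture is essentially the right one---it matches the proof in \cite{MS19}: well-definedness, injectivity via the inverse-orbifold automorphism and the equivalence of the rank condition with innerness on $V_1$, surjectivity by producing an inner automorphism of $V$ that orbifolds to $V_\Lambda$, and the identification of $V_\Lambda$ as the unique \strathol{} \voa{} of central charge $24$ whose weight-one space is abelian of dimension $24$. But the decisive step is missing in both halves: you never actually produce the inner automorphism. Writing the requirement $\dim((V^{\orb(\sigma_x)})_1)=24$ as ``a finite geometric extremal problem'' and then saying ``solving it yields $x$'' assumes exactly what has to be proven; the non-negativity of the rest term $R$ only gives the upper bound $\dim((V^{\orb(\sigma_x)})_1)\le 24+\sum_{d\mid n}c_n(d)\dim(V_1^{\sigma_x^d})$, and nothing in your argument shows that some $x$ achieves $24$ exactly. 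The actual proof is by explicit construction: one takes $u=\sum_{i=1}^r\rho_i/h_i^\vee$ (Weyl vectors over dual Coxeter numbers---precisely the automorphism recalled at the end of \autoref{sec:24}), shows that $\sigma_u$ has type~$0$ and order $n=\lcm(\{l_ih_i^\vee\}_{i=1}^r)$, and verifies with the averaged very strange formula, combined with the trace identity \eqref{eq:221}, that $\dim((V^{\orb(\sigma_u)})_1)=24$. At that point abelianness comes for free: were $(V^{\orb(\sigma_u)})_1$ semisimple, \eqref{eq:221} would force $h_i^\vee/k_i=0$, which is impossible---so you also did not need to arrange separately that ``all roots are removed.''

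The same omission undermines your injectivity step. The rigidity you invoke---uniqueness up to conjugacy of \emph{any} inner $\zeta$ with $V^{\orb(\zeta)}\cong V_\Lambda$---is stronger than what is needed and is not established by what you wrote: an arbitrary such $\zeta$ need not be extremal, so your ``geometric analysis'' does not even apply to it. What the proof in \cite{MS19} actually shows is that the inverse-orbifold automorphism of a \gdh{}, which is automatically extremal and restricts to an inner automorphism of $V_1$ by the rank condition, is conjugate to the specific automorphism $\sigma_u$; both existence (surjectivity) and uniqueness (injectivity) are anchored to this one explicit element, and the very strange formula is the tool that verifies its properties. It is not a black box that ``solves'' an extremal problem whose solvability you have presupposed.
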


The proof combines the dimension formula with an averaged version of Kac's very strange formula \cite{Kac90}. It does not use any classification result for either side of the correspondence.

This theorem generalises the natural bijection between the deep holes of the Leech lattice $\Lambda$ and the Niemeier lattices with roots \cite{Bor85b}, which is mediated by the holey construction \cite{CS82}.

Recall that the weight-$1$ Lie algebra $V_1$ of a \strathol{} \voa{} $V$ of central charge $24$ is either abelian or semisimple. If $g$ is a \gdh{} of $V_\Lambda$ with non-zero $(V_\Lambda^g)_1$ and $V\cong V_\Lambda^{\orb(g)}$, then $V_1$ is abelian if and only if $\dim(V_1)=24$ if and only if $V\cong V_\Lambda$ if and only if $g=\id$.

\medskip

The inverse orbifold construction corresponding to a \gdh{} $g$ of the Leech lattice \voa{} $V_\Lambda$ takes a very simple form \cite{ELMS21}. Assume that $V=V_\Lambda^{\orb(g)}$ is a \strathol{} \voa{} $V$ of central charge $24$ with $V_1=\g_1\oplus\ldots\oplus\g_r$ semisimple. Then the inverse-orbifold automorphism of $g$ (which must be of type~$0$ and extremal \cite{MS23}) is given by the inner automorphism
\begin{equation*}
\sigma_u=\e^{2\pi\i u_0}\quad\text{with}\quad u\coloneqq\sum_{i=1}^r\rho_i/h_i^\vee
\end{equation*}
where $h_i^\vee$ is the dual Coxeter number and $\rho_i$ the Weyl vector of $\g_i$. The order of $\sigma_u$ on each simple ideal $\g_i$ is $l_ih_i^\vee$ where $l_i\in\{1,2,3\}$ is the lacing number of $\g_i$. Hence, the order on $V_1$ is $\lcm(\{l_ih_i^\vee\}_{i=1}^r)$, which can be shown to equal the order $n$ of $\sigma_u$ on the whole \voa{} $V$. Of course, this equals the order of the corresponding \gdh{} $g\in\Aut(V_\Lambda)$.


\section{Generalised Hole Diagrams}\label{sec:holediag}

In this section we associate generalised hole diagrams with automorphisms of the Leech lattice \voa{} $V_{\Lambda}$. They will be the main datum we use to classify the \gdh{}s in $\Aut(V_{\Lambda})$.

\medskip

Let $V_{\Lambda}$ be the Leech lattice \voa{} and $g\in\Aut(V_{\Lambda})$ of order \mbox{$n>1$} and type~$0$ such that $V_{\Lambda}^g$ satisfies the positivity condition. Let $\nu$ be the projection of $g$ to $\O(\Lambda)$. Consider the orbifold construction $V_{\Lambda}^{\orb(g)}=\bigoplus_{i\in\Z_n}W_{\Lambda}^{(i,0)}$ and assume that $\rk((V_{\Lambda}^{\orb(g)})_1)=\rk((V_{\Lambda}^g)_1)>0$. Then $\g\coloneqq(V_{\Lambda}^{\orb(g)})_1$ is a semisimple or abelian Lie algebra and $\h=(V_\Lambda^g)_1=\{h(-1)\otimes\ee_0\,|\,h\in\pi_\nu(\h_\Lambda)\}$ is a Cartan subalgebra of $\g$.

The non-degenerate, invariant bilinear form $\langle\cdot,\cdot\rangle$ on $V_{\Lambda}^{\orb(g)}$, normalised such that $\langle\vac,\vac\rangle=-1$, restricts to a non-degenerate, invariant bilinear form on $\g$. The Cartan subalgebra $\h$ with the form $\langle\cdot,\cdot\rangle$ is naturally isometric to the subspace $\pi_\nu(\h_\Lambda)$ of $\h_\Lambda=\C\otimes_\Z\Lambda$. We may also identify $\h$ with $\h^*$ via $\langle\cdot,\cdot\rangle$. We write the Cartan decomposition corresponding to $\h$ as
\begin{equation*}
\g=\h\oplus\bigoplus_{\alpha\in\Phi}\g_{\alpha}
\end{equation*}
with root system $\Phi\subseteq\h^*$, which is empty if and only if $\g$ is abelian. The inverse orbifold automorphism $\amgis$ of $g$ restricts to an inner automorphism of $\g$, and $\g$ decomposes into eigenspaces
\begin{equation*}
\g=\g_{(0)}\oplus\g_{(1)}\oplus\ldots\oplus\g_{(n-1)}
\end{equation*}
where $\g_{(i)}=\g\cap W_{\Lambda}^{(i,0)}=(W_{\Lambda}^{(i,0)})_1$ and $\g_{(0)}=(V_\Lambda^g)_1=\h$. Since the action of $\amgis$ commutes with the adjoint action of $\h$ on $\g$ and the spaces $\g_{\alpha}$ are $1$-dimensional, each $\g_{\alpha}$ lies in exactly one $\g_{(i)}$. Hence, the root system $\Phi$ is a disjoint union
\begin{equation*}
\Phi=\Phi_{(1)}\cup\ldots\cup\Phi_{(n-1)}
\end{equation*}
with $\Phi_{(i)}=\{\alpha\in\Phi\,|\,\g_\alpha\subseteq\g_{(i)}\}$. We define
\begin{equation*}
\Pi(g)\coloneqq\Phi_{(1)}.
\end{equation*}
Since $(1,n)=1$, the weight-$1$ subspace of the irreducible $g$-twisted $V$-module $V_{\Lambda}(g)$ is $(W_{\Lambda}^{(1,0)})_1$. Hence, $\Pi(g)\subseteq\h^*$ can also be defined as the set of weights of the adjoint action of $\h$ on $V_{\Lambda}(g)_1$.
\begin{prop}
Assume that the root system $\Phi$ of $\g$ is non-empty. Then the inner products $2\langle\alpha_i,\alpha_j\rangle/\langle\alpha_i,\alpha_i\rangle$ for $\alpha_i,\alpha_j\in\Pi(g)$ form a generalised Cartan matrix with Dynkin diagram $\Phi(g)$ given by a subdiagram of the extended affine Dynkin diagram associated with the (finite) Dynkin diagram of $\Phi$.
\end{prop}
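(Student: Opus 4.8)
The plan is to establish the two generalised-Cartan-matrix axioms by a short root-system argument that uses only the grading $\g=\g_{(0)}\oplus\dots\oplus\g_{(n-1)}$ with $\g_{(0)}=\h$, and then to pin down the diagram by reading off the $\amgis$-grading from the explicit shape $\amgis=\sigma_u$ with $u=\sum_i\rho_i/h_i^\vee$ recorded in \autoref{sec:24}. Throughout I use that $\sigma_u$ acts on a root space $\g_\alpha$ by $\e^{2\pi\i\langle u,\alpha\rangle}$, so that $\g_\alpha\subseteq\g_{(d(\alpha))}$, where the \emph{degree} $d(\alpha)\in\{0,\dots,n-1\}$ is defined by $\langle u,\alpha\rangle\equiv d(\alpha)/n\pmod1$, and that $\Pi(g)=\Phi_{(1)}$ is exactly the set of roots of degree $1$.

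First the matrix $A=(a_{\alpha\beta})$, $a_{\alpha\beta}=2\langle\alpha,\beta\rangle/\langle\alpha,\alpha\rangle$. The diagonal entries are $2$, and since $\langle\cdot,\cdot\rangle$ is symmetric we have $a_{\alpha\beta}=0$ iff $\langle\alpha,\beta\rangle=0$ iff $a_{\beta\alpha}=0$. It remains to show $\langle\alpha,\beta\rangle\le0$ for distinct $\alpha,\beta\in\Pi(g)$. Both root spaces lie in $\g_{(1)}$, so $\langle u,\alpha\rangle\equiv\langle u,\beta\rangle\pmod1$. If $\langle\alpha,\beta\rangle>0$, then $\alpha\neq\beta$ forces $\alpha-\beta$ to be a root of $\Phi$ by the standard root-string argument; but $\amgis$ acts on $\g_{\alpha-\beta}$ by $\e^{2\pi\i(\langle u,\alpha\rangle-\langle u,\beta\rangle)}=1$, whence $\g_{\alpha-\beta}\subseteq\g_{(0)}=\h$, contradicting that the Cartan subalgebra contains no root space. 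Hence $\langle\alpha,\beta\rangle\le0$ and $A$ is a generalised Cartan matrix. (As the restriction of $\langle\cdot,\cdot\rangle$ to each simple ideal is a positive multiple of the normalised invariant form, the $a_{\alpha\beta}$ are the usual integer Cartan entries, so this part is independent of the levels.)

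For the subdiagram claim I would reduce to a single simple ideal $\g_i$: roots of distinct ideals are orthogonal, so $\Pi(g)$ is the disjoint union of its parts in the various $\g_i$ with no edges across ideals, and it suffices to locate each part inside the affine Dynkin diagram of $\g_i$. Fix simple roots $\alpha_1,\dots,\alpha_\ell$ of $\g_i$ making $u_i=\rho_i/h_i^\vee$ dominant and put $\alpha_0=-\theta_i$. In the normalisation with long roots of squared length $2$ one computes, with $m_i:=n/(l_ih_i^\vee)$ and $l_i$ the lacing number, $d(\alpha_j)=m_i$ for a short simple root, $d(\alpha_j)=m_il_i$ for a long one, $d(\alpha_0)=m_il_i$, and $d(\theta_i)=n-m_il_i$ (using that $\theta_i$ is the unique maximiser of $\langle\rho_i,\cdot\rangle$ on the positive roots). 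Since $\langle u,\alpha\rangle<1$ for every positive root, degrees add along $\alpha=\sum_jc_j\alpha_j$, i.e.\ $d(\alpha)=\sum_jc_jd(\alpha_j)$ with all $d(\alpha_j)\ge1$; hence $d(\alpha)=1$ forces $m_i=1$ together with $\alpha$ a short simple root, while the only negative root of degree $1$ is $\alpha_0$, and that only when $m_i=l_i=1$. Thus the part of $\Pi(g)$ in $\g_i$ is empty unless $l_ih_i^\vee=n$, and otherwise is exactly a set of affine simple roots of $\g_i$ (all of them in the simply-laced case, the short ones otherwise), hence a subdiagram of the affine diagram of $\g_i$. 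Taking the union over $i$ identifies $\Phi(g)$ with a subdiagram of the extended affine Dynkin diagram of $\Phi$.

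The generalised-Cartan-matrix axioms are essentially free; the real work lies in the degree bookkeeping of the last paragraph, where the length normalisation and the wrap-around contribution of $-\theta_i$ (present only in the simply-laced case) must be tracked carefully. This is exactly the point at which the explicit form $u=\sum_i\rho_i/h_i^\vee$ of the inverse-orbifold automorphism, and the lacing number $l_i$, enter the argument and decide whether a given ideal contributes the full affine diagram or only the subdiagram spanned by its short simple roots.
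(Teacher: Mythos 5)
Your first paragraph (the generalised-Cartan-matrix axioms via the root-string argument in the $\Z_n$-grading) is fine, but the second, main part of your proof has a genuine gap in generality: you read off the grading from the explicit formula $\amgis=\sigma_u$ with $u=\sum_i\rho_i/h_i^\vee$. That formula is only established (in \cite{ELMS21}, and quoted in \autoref{sec:24}) for the inverse-orbifold automorphism of a \emph{\gdh{}}, i.e.\ under the extremality assumption. The proposition, however, is stated for an arbitrary $g\in\Aut(V_\Lambda)$ of order $n>1$ satisfying only the positivity condition and the rank condition $\rk((V_\Lambda^{\orb(g)})_1)=\rk((V_\Lambda^g)_1)>0$; the paper adds extremality only later ("We now additionally assume that the automorphism \ldots is extremal"). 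For such a general $g$, the restriction of $\amgis$ to $\g$ is merely \emph{some} finite-order element of the torus $\exp(2\pi\i\,\ad\h)$, say $\e^{2\pi\i\,\ad x}$ with $x\in\h$, and your degree bookkeeping breaks down: without dominance of $x$ the degrees of positive roots no longer add without wrap-around, the roots of degree $1$ need not be simple roots of $\Phi$ (they can be non-simple positive roots or negatives of roots other than $\theta_i$), and one must first conjugate $x$ into the fundamental alcove by the affine Weyl group before any such computation is valid. That conjugation step is exactly Kac's canonical form for finite-order automorphisms, and indeed the paper's entire proof is the one-line citation of Proposition~8.6~c) in \cite{Kac90}, which applies to any finite-order inner automorphism and immediately identifies $\Pi(g)$ with a set of affine simple roots.

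To repair your argument in the stated generality, you should replace the appeal to $u=\sum_i\rho_i/h_i^\vee$ by: (i) $\amgis|_\g$ is inner and fixes $\h$ pointwise, hence lies in the maximal torus and equals $\e^{2\pi\i\,\ad x}$ for some $x\in\h$; (ii) after conjugating by the affine Weyl group, $\amgis|_{\g_i}$ is in Kac's canonical form of type $(s_0,\ldots,s_l;1)$ with all labels $s_j\geq0$; (iii) the rank condition $\g_{(0)}=\h$ forces every $s_j\geq1$, whence the roots of degree $1$ are exactly the affine simple roots $\alpha_j$ with $s_j=1$, giving a subdiagram of the affine diagram. This is precisely the content of Proposition~8.6~c) in \cite{Kac90}. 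What your computation does buy, where it is valid, is more than the proposition: for a genuine \gdh{} it determines \emph{which} subdiagram occurs (full affine diagram in the simply-laced case, the short simple roots otherwise, and only for ideals with $l_ih_i^\vee=n$), which is the paper's later \autoref{prop:invtype}, proved there by combining Kac's result with the type data from \cite{ELMS21}.
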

\begin{proof}
This is Proposition~8.6~c) in \cite{Kac90} together with the fact that $\g_{(0)}=\h$ is a Cartan subalgebra of $\g$.
\end{proof}

Let $\varphi(g)$ be the cycle shape of the image $\nu$ of $g$ under the natural projection $\Aut(V_{\Lambda}) \to \O(\Lambda)$. We define the \emph{generalised hole diagram} of $g$ as the pair
\begin{equation*}
(\varphi(g),\Phi(g)).
\end{equation*}
Note that the generalised hole diagram only depends on the algebraic conjugacy class of $g$ in $\Aut(V_{\Lambda})$. For $g=\id$ we set $\Phi(g)=\emptyset$.
\enlargethispage{\baselineskip}

\medskip

In the following, we study the weights $\Pi(g)\subseteq\h^*$ and the corresponding Dynkin diagram $\Phi(g)$ in more detail. These results will allow us to classify these diagrams in \autoref{sec:gdhclass} in the case where $g$ is a \gdh{}.

By \autoref{prop:leechconj}, up to conjugacy, $g=\phi_\eta(\nu)\sigma_h$ for some standard lift $\phi_\eta(\nu)$ of $\nu\in\O(\Lambda)\cong\Co_0$ and some $h\in\pi_\nu(\Lambda\otimes_\Z\Q)$. Denote $m=|\nu|$, which divides $n=|g|$, and let $\prod_{t\mid m}t^{b_t}$ be the cycle shape of $\nu$. Recall that the unique irreducible $g$-twisted $V_\Lambda$-module is of the form
\begin{equation*}
V_\Lambda(g)=M(1)[\nu]\otimes\C[-h+\pi_{\nu}(\Lambda)]\otimes\C^{d(\nu)}
\end{equation*}
with the twisted Heisenberg algebra $M(1)[\nu]$ and defect $d(\nu)\in\Ns$. $V_\Lambda(g)$ is spanned by the vectors
\begin{equation*}
v=h_1(-n_1)\ldots h_r(-n_r)\otimes\ee_\alpha\otimes t
\end{equation*}
where the $h_i$ are in certain eigenspaces of $\h_\Lambda$, $n_i\in(1/m)\Ns$, $\alpha\in-h+\pi_{\nu}(\Lambda)$ and $t\in\C^{d(\nu)}$. Such a vector has $L_0$-weight
\begin{equation*}
\wt(v)=\rho_\nu+n_1+\ldots+n_r+\frac{\langle\alpha,\alpha\rangle}{2}
\end{equation*}
with vacuum anomaly $\rho_\nu=\frac{1}{24}\sum_{t\mid m}b_t(t-1/t)$ and is acted on by the Cartan subalgebra $\h=(V_\Lambda^g)_1\cong\pi_\nu(\h_\Lambda)$ of $\g=(V_\Lambda^{\orb(g)})_1$ as
\begin{equation*}
h_0v=\langle h,\alpha\rangle v
\end{equation*}
for $h\in\pi_\nu(\h_\Lambda)$.

\begin{prop}\label{prop:leechroots}
The weights of the action of $\h$ on $V_{\Lambda}(g)_1$ are given by
\begin{equation*}
\Pi(g)=\big\{\alpha\in-h+\pi_{\nu}(\Lambda)\,\big|\,\langle\alpha,\alpha\rangle/2=1-\rho_\nu\big\}
\end{equation*}
if $d(\nu)=1$ and
\begin{equation*}
\Pi(g)=\emptyset
\end{equation*}
if $d(\nu)>1$.
\end{prop}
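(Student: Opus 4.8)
The plan is to read the weight-one space off the explicit realisation $V_\Lambda(g)=M(1)[\nu]\otimes\C[-h+\pi_\nu(\Lambda)]\otimes\C^{d(\nu)}$ and to confront it with the fact that $V_\Lambda(g)_1=\g_{(1)}$ is a sum of \emph{one-dimensional} root spaces of the semisimple Lie algebra $\g$. A spanning vector $v=h_1(-n_1)\cdots h_r(-n_r)\otimes\ee_\alpha\otimes t$ has $\h$-weight $\alpha$---the Cartan zero-modes $h_0$ ($h\in\h$) commute with all twisted Heisenberg modes and act trivially on the factor $\C^{d(\nu)}$---and conformal weight $\rho_\nu+\sum_i n_i+\langle\alpha,\alpha\rangle/2$. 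Fixing $\alpha$, the weight-one condition pins the oscillator energy to the single value $E_\alpha:=1-\rho_\nu-\langle\alpha,\alpha\rangle/2$, so the $\alpha$-weight space of $V_\Lambda(g)_1$ has dimension $p_\nu(E_\alpha)\,d(\nu)$, where $p_\nu(E)$ denotes the number of states of energy $E$ in $M(1)[\nu]$.

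The heart of the matter, and the step I expect to be the main obstacle, is to show that a root vector carries no oscillators, i.e.\ that $E_\alpha=0$ for every $\alpha\in\Pi(g)$. I would derive this from the inequality $p_\nu(E)\geq2$ for all $E>0$ (while $p_\nu(0)=1$). All eigenvalues of $\nu$ are $m$-th roots of unity, so every oscillator energy lies in $\tfrac1m\Z$ and the minimal positive energy is $1/m$, occurring with multiplicity $c_m$, the multiplicity of a primitive $m$-th root of unity among the eigenvalues of $\nu$. For the automorphisms relevant here this multiplicity is at least $2$ (for $m=2$ this is forced already because $\Lambda$ has no roots, hence $\O(\Lambda)$ no reflections, so the $(-1)$-eigenspace is at least two-dimensional), and then two distinct minimal creation operators $a_1(-1/m)$, $a_2(-1/m)$ produce the two distinct states $a_1(-1/m)^k$ and $a_1(-1/m)^{k-1}a_2(-1/m)$ at every energy $k/m>0$, giving $p_\nu(E)\geq2$. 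Since a root space is one-dimensional, $p_\nu(E_\alpha)\,d(\nu)=1$ forces $E_\alpha=0$ and $d(\nu)=1$; thus $\langle\alpha,\alpha\rangle/2=1-\rho_\nu$. (In the few high-order classes with $c_m=1$ one instead uses the bound $\rho_\nu\geq(m-1)/m$, which makes any oscillator state already reach weight one only for $\alpha=0$; this is the one point where input on the conjugacy classes of $\O(\Lambda)\cong\Co_0$ from \cite{MS19} is needed.)

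It remains to assemble the statement. If $d(\nu)=1$ and $\langle\alpha,\alpha\rangle/2=1-\rho_\nu$---whence $\alpha\neq0$, since a root occurs only for $\rho_\nu<1$---then the bare vector $\mathbf1[\nu]\otimes\ee_\alpha\otimes1$ is a nonzero $\h$-weight vector of $\g_{(1)}\subseteq\g$, hence a root in $\Phi_{(1)}=\Pi(g)$; together with the previous paragraph this gives the claimed equality. If $d(\nu)>1$, then no weight space of $V_\Lambda(g)_1$ can be one-dimensional, so $\g_{(1)}$ contains no root and $\Pi(g)=\emptyset$. Finally, all elements of $\Pi(g)$ share the common norm $\langle\alpha,\alpha\rangle=2(1-\rho_\nu)$, so the generalised Cartan matrix $2\langle\alpha_i,\alpha_j\rangle/\langle\alpha_i,\alpha_i\rangle$ is symmetric; a connected Dynkin diagram whose roots all have equal length is simply-laced, which is the final assertion.
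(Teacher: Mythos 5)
Your proposal is correct in substance and shares the paper's skeleton---read $V_\Lambda(g)_1$ off the explicit realisation $M(1)[\nu]\otimes\C[-h+\pi_\nu(\Lambda)]\otimes\C^{d(\nu)}$ and play it against the one-dimensionality of the root spaces of $\g=(V_\Lambda^{\orb(g)})_1$---but your key step is genuinely different. The paper kills the oscillators purely numerically: it asserts $\rho_\nu>1-1/m$ for all $\nu\in\O(\Lambda)$, so that any state carrying an oscillator (energy at least $1/m$) already has $L_0$-weight greater than $1$, and the one-dimensionality of root spaces is used only to force $d(\nu)=1$. You use the one-dimensionality twice: your dichotomy $p_\nu(E)\in\{0\}\cup\{2,3,\dots\}$ for $E>0$, valid whenever a primitive $m$-th root of unity eigenvalue has multiplicity $c_m\geq2$, makes the root spaces exclude the oscillators \emph{and} the defect simultaneously, with the numerical bound relegated to a fallback for classes with $c_m=1$. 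This buys you something real: the paper's strict inequality is actually an \emph{equality} for several relevant cycle shapes ($1^82^8$, $1^63^6$, $1^45^4$, $1^37^3$ all have $\rho_\nu=1-1/m$), where the paper's argument as written leaves open weight-one states $a(-1/m)\otimes\ee_0\otimes t$ (possible when $h\in\pi_\nu(\Lambda)$); these have $\h$-weight zero and must be ruled out by the standing rank hypothesis ($\h$ is a Cartan subalgebra of $\g$, so $\g_{(1)}$ contains no weight-zero vectors), whereas your $c_m\geq2$ argument disposes of exactly those classes without further ado, since there $c_m=b_m\geq2$. The cost is symmetrical: you verify $c_m\geq2$ only for $m=2$ (via the absence of reflections in $\O(\Lambda)$), and both your fallback bound $\rho_\nu\geq(m-1)/m$ and the case of elements whose spectrum contains no primitive $m$-th root of unity at all (so the minimal oscillator energy exceeds $1/m$ and your dichotomy needs the multiplicity of a different eigenvalue) remain facts about the frame shapes of $\Co_0$ requiring class-by-class input---the same kind of input the paper compresses into its single asserted inequality. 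Note also that in your fallback case the conclusion ``oscillator states reach weight one only for $\alpha=0$'' must still be combined with the rank hypothesis, as above, to conclude that such states do not occur at all; otherwise $0$ would be a weight of $\h$ on $V_\Lambda(g)_1$ lying outside the claimed set.
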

\begin{proof}
First, note that $\rho_\nu\geq1-1/m$ for all $\nu\in\O(\Lambda)$. Moreover, $V_\Lambda(g)_1$ cannot contain any vector $v=\ldots\otimes\ee_0\otimes t$ as this would lie in the centraliser of $\h=(V_\Lambda^g)_1$, contradicting the assumption that $\h$ is a Cartan subalgebra of $\g$. Therefore, a vector $v\in V_\Lambda(g)_1$ must be of the form $v=1\otimes\ee_\alpha\otimes t$ for some (non-zero) $\alpha\in-h+\pi_{\nu}(\Lambda)$ and $t\in\C^{d(\nu)}$, i.e.\ there can be no contribution to the weight from the twisted Heisenberg algebra except for the vacuum anomaly. Hence,
\begin{equation*}
V_\Lambda(g)_1=\big\{1\otimes\ee_\alpha\otimes t\,\big|\,\alpha\in-h+\pi_{\nu}(\Lambda)\text{ s.t. }\langle\alpha,\alpha\rangle/2=1-\rho_\nu,\;t\in\C^{d(\nu)}\big\}.
\end{equation*}
Since the action of the Cartan subalgebra $\h$ is independent of $t$ and all weight spaces are $1$-dimensional, either $d(\nu)=1$ or $V_\Lambda(g)_1=\{0\}$. In the first case
\begin{equation*}
\Pi(g)=\big\{\alpha\in-h+\pi_{\nu}(\Lambda)\,\big|\,\langle\alpha,\alpha\rangle/2=1-\rho_\nu\big\},
\end{equation*}
while $\Pi(g)=\emptyset$ if $d(\nu)>1$.
\end{proof}

Even if $d(\nu)=1$, it is possible for $\Pi(g)$ to be empty, for instance if the shortest vectors in $-h+\pi_{\nu}(\Lambda)$ have norm greater than $2(1-\rho_\nu)$. This is in particular the case if $\rho_\nu>1$.

\begin{prop}\label{prop:leechroots2}
The Dynkin diagram $\Phi(g)$ of $\Pi(g)$ can also be obtained as follows. Each vector $\alpha_i\in\Pi(g)\subseteq\h^*$ defines a node of $\Phi(g)$. The nodes $\alpha_i$ and $\alpha_j$ for $i\neq j$ are joined by
\begin{enumerate}
\item no edge if $\langle \alpha_i-\alpha_j,\alpha_i-\alpha_j\rangle/2 = 2(1-\rho_\nu)$,
\item a single edge if $\langle\alpha_i-\alpha_j,\alpha_i-\alpha_j\rangle/2 = 3(1-\rho_\nu)$,
\item an undirected double edge if $\langle\alpha_i-\alpha_j,\alpha_i-\alpha_j\rangle/2 = 4(1-\rho_\nu)$,
\end{enumerate}
corresponding to angles of $2\pi/4$, $2\pi/3$ and $2\pi/2$, respectively, between $\alpha_i$ and $\alpha_j$.
\end{prop}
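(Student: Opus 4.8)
The plan is to reduce everything to the elementary identity relating $\langle\alpha_i-\alpha_j,\alpha_i-\alpha_j\rangle$ to $\langle\alpha_i,\alpha_j\rangle$, exploiting that all weights in $\Pi(g)$ share a common norm. First I would invoke \autoref{prop:leechroots}: every $\alpha\in\Pi(g)$ satisfies $\langle\alpha,\alpha\rangle/2=1-\rho_\nu$, so writing $c:=1-\rho_\nu$ we have $\langle\alpha_i,\alpha_i\rangle=\langle\alpha_j,\alpha_j\rangle=2c$ for all nodes. Expanding the norm of the difference then gives
\[
\frac{\langle\alpha_i-\alpha_j,\alpha_i-\alpha_j\rangle}{2}=2c-\langle\alpha_i,\alpha_j\rangle,
\]
so the left-hand side equals $2c$, $3c$ or $4c$ exactly when $\langle\alpha_i,\alpha_j\rangle$ equals $0$, $-c$ or $-2c$, respectively.

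Next I would connect $\langle\alpha_i,\alpha_j\rangle$ to the combinatorics of the diagram. By the preceding proposition (via Proposition~8.6~c) in \cite{Kac90}) the matrix with entries $a_{ij}=2\langle\alpha_i,\alpha_j\rangle/\langle\alpha_i,\alpha_i\rangle=\langle\alpha_i,\alpha_j\rangle/c$ is a generalised Cartan matrix whose Dynkin diagram $\Phi(g)$ is a subdiagram of a simply-laced extended affine Dynkin diagram. In any such affine $ADE$ diagram the only off-diagonal entries that occur are $a_{ij}=0$ (no edge), $a_{ij}=a_{ji}=-1$ (single edge), and---only inside a $\widetilde{A}_1$ component---$a_{ij}=a_{ji}=-2$ (undirected double edge). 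Translating back through $a_{ij}=\langle\alpha_i,\alpha_j\rangle/c$ yields exactly $\langle\alpha_i,\alpha_j\rangle\in\{0,-c,-2c\}$, which by the displayed identity are precisely the three cases of the statement. The associated angles follow from $\cos\theta=\langle\alpha_i,\alpha_j\rangle/(2c)\in\{0,-1/2,-1\}$, i.e.\ $\theta\in\{2\pi/4,2\pi/3,2\pi/2\}$.

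I expect the only point requiring care---rather than a genuine obstacle---to be justifying that no other bond type can occur, i.e.\ that $a_{ij}$ takes no value outside $\{0,-1,-2\}$. This is where \autoref{prop:leechroots} (all roots of equal length, hence no arrows and no triple bonds) combines with the subdiagram-of-an-affine-$ADE$-diagram statement: the double bond is genuinely undirected and arises solely from the affine $\widetilde{A}_1$ configuration $\alpha_j=-\alpha_i$, consistent with the common norm. Positive-definiteness of the ambient space $\pi_\nu(\h_\Lambda)$ independently forces $\cos\theta\geq-1$, ruling out anything beyond the antiparallel case and confirming that the enumeration above is exhaustive.
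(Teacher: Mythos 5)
Your proof is correct and is essentially the argument the paper intends: the paper states this proposition without proof, treating it as an immediate consequence of \autoref{prop:leechroots} (all weights in $\Pi(g)$ share the norm $2(1-\rho_\nu)$, so every component of $\Phi(g)$ is simply-laced) together with the preceding proposition that the numbers $2\langle\alpha_i,\alpha_j\rangle/\langle\alpha_i,\alpha_i\rangle$ form a generalised Cartan matrix whose diagram is a subdiagram of an extended affine Dynkin diagram of $ADE$ type. Your expansion $\langle\alpha_i-\alpha_j,\alpha_i-\alpha_j\rangle/2=2(1-\rho_\nu)-\langle\alpha_i,\alpha_j\rangle$ and the resulting dictionary between $a_{ij}\in\{0,-1,-2\}$ and the three stated norm values (with $-2$ occurring only in $\tilde{A}_1$, consistent with the undirected double edge) is precisely the omitted computation.
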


We define the \emph{shifted weights}
\begin{equation*}
\tilde\Pi(g)\coloneqq\Pi(g)+h=\big\{\beta\in\pi_{\nu}(\Lambda)\,\big|\,\langle\beta-h,\beta-h\rangle/2=1-\rho_\nu\big\}\subseteq\pi_\nu(\Lambda).
\end{equation*}
We can associate a Dynkin diagram with $\tilde\Pi(g)$ in the same way as with $\Pi(g)$, using \autoref{prop:leechroots2}. Since the translation by $h$ does not affect the distances between the weights, both diagrams coincide. Geometrically, $\tilde\Pi(g)$ is given by the elements in $\pi_{\nu}(\Lambda)$ lying on the sphere in $\pi_{\nu}(\Lambda \otimes_\Z \R)$ with centre $h$ and radius $\sqrt{2(1-\rho_\nu)}$.

Examples of Dynkin diagrams inside the lattice $A_2$ (with different radii) are shown in \autoref{fig:diag}. The centres are coloured red, the diagrams blue.

\begin{figure}[ht]
\caption{Dynkin diagrams in the lattice $A_2$.}
~\\\includegraphics{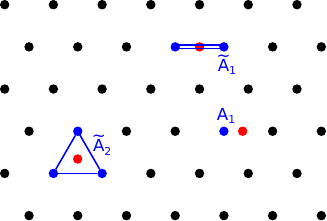}\\~
\label{fig:diag}
\end{figure}

In a connected extended affine Dynkin diagram with simple roots $\alpha_0,\ldots,\alpha_l$ there is a linear relation between the $\alpha_i$. More precisely, there are positive integers $a_i$ such that $\sum_{i=0}^l a_i\alpha_i=0$. If chosen coprime, the $a_i$ are unique and sometimes called \emph{Kac labels} (see, e.g., Table~Aff~1 in Section~4.8 of \cite{Kac90}).

\begin{prop}\label{prop:centre}
If $\Phi(g)$ contains a connected component of affine type, then the centre $h$ of $\tilde\Pi(g)$ can be reconstructed from the weights in $\tilde\Pi(g)$.
\end{prop}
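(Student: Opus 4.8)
The plan is to exhibit $h$ explicitly as a Kac-label-weighted average of the shifted weights lying on an affine component of $\Phi(g)$, so that $h$ is manifestly recovered from $\tilde\Pi(g)$ alone.

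First I would fix a connected component $C$ of $\Phi(g)$ of affine type. Such a $C$ is a full extended Dynkin diagram, and its nodes correspond to a set of roots $\alpha_0,\ldots,\alpha_l\in\Pi(g)\subseteq\h^*$, equivalently to the shifted weights $\tilde\alpha_k=\alpha_k+h\in\tilde\Pi(g)$. By the proposition above, the numbers $2\langle\alpha_j,\alpha_k\rangle/\langle\alpha_j,\alpha_j\rangle$ form the affine generalised Cartan matrix $A$ of $C$, and since $\Pi(g)$ is simply-laced (\autoref{prop:leechroots}) all these roots share the common norm $\langle\alpha_k,\alpha_k\rangle=2(1-\rho_\nu)$, so that $A_{jk}=\langle\alpha_j,\alpha_k\rangle/(1-\rho_\nu)$.

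The key step is to extract the affine linear relation among these roots. An affine generalised Cartan matrix is singular with one-dimensional kernel spanned by the vector of Kac labels $(a_0,\ldots,a_l)$, i.e.\ $\sum_k A_{jk}a_k=0$ for every $j$. Clearing the common factor gives $\langle\alpha_j,\sum_k a_k\alpha_k\rangle=0$ for all $j\in C$. The vector $\sum_k a_k\alpha_k$ lies in the span $U$ of $\{\alpha_k\}_{k\in C}$ and is orthogonal to $U$; as $\langle\cdot,\cdot\rangle$ restricts to a positive-definite form on the real span of the weights, this forces
\[
\sum_{k}a_k\alpha_k=0.
\]
Translating back by $h$, with $m_C:=\sum_k a_k$ (the Coxeter number of $C$, which for simply-laced $C$ coincides with the dual Coxeter number), I obtain $\sum_k a_k\tilde\alpha_k=\sum_k a_k\alpha_k+m_C\,h=m_C\,h$, hence
\[
h=\frac{1}{m_C}\sum_{k}a_k\tilde\alpha_k.
\]
The right-hand side involves only the points $\tilde\alpha_k\in\tilde\Pi(g)$ together with the integers $a_k$ and $m_C$; the latter are determined by the isomorphism type of $C$, and that type is read off from $\Phi(g)$, which by \autoref{prop:leechroots2} is computed purely from the pairwise norms $\langle\tilde\alpha_i-\tilde\alpha_j,\tilde\alpha_i-\tilde\alpha_j\rangle$ of the weights in $\tilde\Pi(g)$. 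Thus $h$ is reconstructed from $\tilde\Pi(g)$.

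The main (and essentially only) subtlety is the assignment of Kac labels to the nodes of $C$, which a priori is fixed only up to a diagram automorphism of $C$. This causes no ambiguity, however: any diagram automorphism permutes nodes carrying equal Kac labels, so the weighted sum $\sum_k a_k\tilde\alpha_k$ is independent of the chosen labelling and the formula for $h$ is well defined. Alternatively one may single out the affine node as $-\theta$ and use $\theta=\sum_{i\geq1}a_i\alpha_i$ with $a_0=1$, but the null-vector argument avoids doing so.
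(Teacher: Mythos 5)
Your proof is correct and takes essentially the same route as the paper: the paper likewise recovers $h$ as the Kac-label-weighted average $h=\sum_{i=0}^{l}a_i\beta_i\big/\big(\sum_{i=0}^{l}a_i\big)$ of the shifted weights, citing the standard linear relation $\sum_{i=0}^{l}a_i\alpha_i=0$ for a connected extended affine diagram (stated just before the proposition) rather than re-deriving it. Your additional derivation of that relation from the singularity of the affine Cartan matrix together with positive-definiteness of the form on $\pi_\nu(\Lambda\otimes_\Z\R)$ is sound, and neatly makes the Kac labels intrinsic to the realised diagram, but it is not needed beyond what the paper already records.
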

\begin{proof}
Denote the shifted weights of the connected affine component by $\beta_0,\ldots,\beta_l$. Write $\beta_i=\alpha_i+h$. Then
\begin{equation*}
h=\sum_{i=0}^l a_i\beta_i\Big/\Big(\sum_{i=0}^la_i\Big).\qedhere
\end{equation*}
\end{proof}

We now additionally assume that the automorphism $g=\phi_\eta(\nu)\sigma_h\in\Aut(V_\Lambda)$ is extremal, i.e.\ that $g$ is a \gdh{}. Then $\rho(V_\Lambda(g))\geq 1$, so that
\begin{equation*}
\min_{\beta\in\pi_\nu(\Lambda)}\frac{\langle\beta-h,\beta-h\rangle}{2}\geq1-\rho_\nu
\end{equation*}
(see Proposition~5.9 in \cite{MS23}). Hence, if the hole diagram $\Phi(g)$ is non-empty, then the points in $\tilde\Pi(g)$ are exactly the \emph{closest vectors} to $h$ in $\pi_\nu(\Lambda)$.

As a side remark (cf.\ \cite{LM23}), we note that $h$ is in general not a \emph{deep hole} or even just a \emph{hole} of the lattice $\pi_\nu(\Lambda)$. Indeed, for most $\nu\in\O(\Lambda)$ the covering radius of $\pi_\nu(\Lambda)$ is greater than $\sqrt{2(1-\rho_\nu)}$ so that $h$ cannot be a deep hole of $\pi_\nu(\Lambda)$. In fact, usually the number of points in $\tilde\Pi(g)$ is less than $\rk(\pi_\nu(\Lambda))+1$, which means that $h$ cannot be a hole. On the other hand, if $\nu\in\O(\Lambda)$ is such that the covering radius of $\pi_\nu(\Lambda)$ is less than $\sqrt{2(1-\rho_\nu)}$, then there can be no extremal automorphism in $\Aut(V_{\Lambda})$ projecting to $\nu$.

We now exploit the fact that the inverse-orbifold automorphism of such a \gdh{} $g$ is known \cite{ELMS21}. Since we assumed that $g$ has order $n>1$, $\g=(V_{\Lambda}^{\orb(g)})_1$ must be semisimple, with decomposition $\g=\g_1\oplus\ldots\oplus\g_r$ into simple ideals. Recall that the inverse-orbifold automorphism is given by $\sigma_u=\e^{2\pi\i u_0}\in\Aut(V_{\Lambda}^{\orb(g)})$ with $u=\sum_{i=1}^r\rho_i/h_i^\vee$ where $h_i^\vee$ is the dual Coxeter number and $\rho_i$ the Weyl vector of $\g_i$ (see \autoref{sec:24}). The restriction of $\sigma_u$ to $\g$ only depends on the Lie algebra structure of $\g$, which means that the Dynkin diagram $\Phi(g)$ can be easily read off from the isomorphism type of $\g$:
\begin{prop}\label{prop:invtype}
Let $g$ be a \gdh{} of $V_\Lambda$ of order $n>1$ with $\rk((V_{\Lambda}^g)_1)>0$. Then $(V_{\Lambda}^{\orb(g)})_1=\g_1\oplus\ldots\oplus\g_r$ is semisimple and the Dynkin diagram $\Phi(g)$ is of type
\begin{equation*}
\bigcup_{\substack{i=1\\l_ih_i^\vee=n}}^r
\begin{cases}
\tilde{A}_l&\text{if }\g_i\text{ has type }A_l, l\geq1,\\
A_1&\text{if }\g_i\text{ has type }B_l, l\geq2,\\
A_{l-1}&\text{if }\g_i\text{ has type }C_l,l\geq3,\\
\tilde{D}_l&\text{if }\g_i\text{ has type }D_l,l\geq4,\\
\tilde{E}_l&\text{if }\g_i\text{ has type }E_l,l\in\{6,7,8\},\\
A_2&\text{if }\g_i\text{ has type }F_4,\\
A_1&\text{if }\g_i\text{ has type }G_2
\end{cases}
\end{equation*}
where $l_i\in\{1,2,3\}$ is the lacing number of the simple ideal $\g_i$.
\end{prop}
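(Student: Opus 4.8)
The plan is to exploit that, by the discussion preceding the statement, the inverse-orbifold automorphism of the \gdh{} $g$ is the explicit inner automorphism $\sigma_u=\e^{2\pi\i u_0}$ with $u=\sum_{i=1}^r\rho_i/h_i^\vee$, so that the Dynkin diagram $\Phi(g)=\Phi_{(1)}$ is entirely governed by the eigenspace decomposition of a single, completely understood inner automorphism. Concretely, $\Phi(g)$ consists of those roots $\alpha$ of $\g=\g_1\oplus\ldots\oplus\g_r$ whose root space $\g_\alpha$ lies in $\g_{(1)}=\g\cap W_{\Lambda}^{(1,0)}$, i.e.\ on which $\sigma_u$ acts by the eigenvalue $\e^{2\pi\i/n}$. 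Since $\sigma_u$ preserves each simple ideal $\g_i$ and acts on $\g_\alpha$ for $\alpha\in\g_i$ by the scalar $\e^{2\pi\i\langle u,\alpha\rangle}=\e^{2\pi\i(\rho_i,\alpha)/h_i^\vee}$, the whole problem reduces to an ideal-by-ideal computation with the Weyl vector.

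First I would record which ideals contribute at all. As noted before the statement, $\sigma_u$ restricted to $\g_i$ has order exactly $l_ih_i^\vee$; its eigenvalues are therefore $(l_ih_i^\vee)$-th roots of unity, and the primitive $n$-th root $\e^{2\pi\i/n}$ occurs among them precisely when $n\mid l_ih_i^\vee$. Combined with $l_ih_i^\vee\mid n$ (the order on $\g_i$ divides the order $n$ of $\sigma_u$ on all of $\g$), this forces $l_ih_i^\vee=n$, which is exactly the index condition in the union. For such an ideal I would then introduce the integer \emph{level} of a root $\alpha=\sum_j c_j\alpha_j$, namely $n\langle u,\alpha\rangle=l_i(\rho_i,\alpha)$, and use the identity $(\rho_i,\alpha_j)=(\alpha_j,\alpha_j)/2$ on simple roots to rewrite it as $l_i\sum_{j\text{ long}}c_j+\sum_{j\text{ short}}c_j$; the roots of $\Phi_{(1)}\cap\g_i$ are exactly those of level $\equiv1\bpmod n$.

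The heart of the argument is then the elementary but case-sensitive identification of the level-$1$ roots. In the simply-laced cases ($A_l,D_l,E_l$) one has $l_i=1$ and the level is just the height: the positive roots of height $1$ are the simple roots, while the only negative root reaching level $\equiv1\bpmod{h_i^\vee}$ is the lowest root $-\theta_i$ (of height $-(h_i^\vee-1)$). These $l+1$ roots are the nodes of the affine diagram, giving $\tilde{A}_l,\tilde{D}_l,\tilde{E}_l$. In the non-simply-laced cases ($l_i\geq2$) the long part of the level contributes a multiple of $l_i\geq2$, so level $1$ forces all long-simple coefficients to vanish and the short-simple coefficients to sum to $1$; hence $\Phi_{(1)}\cap\g_i$ consists precisely of the short simple roots. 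No negatives intervene, since the maximal positive level $l_i(\rho_i,\theta_i)=l_ih_i^\vee-l_i=n-l_i$ stays below $n-1$. Counting the short simple roots of each type ($1$ for $B_l$ and $G_2$, $l-1$ for $C_l$, $2$ for $F_4$) and observing that they form a single unbranched chain yields the diagrams $A_1$, $A_{l-1}$, $A_2$ as claimed. Finally, since $\Pi(g)$ is simply-laced by \autoref{prop:leechroots}, the bonds are single and the combinatorial type is pinned down by the mutual angles via \autoref{prop:leechroots2} (equivalently by Proposition~8.6 in \cite{Kac90}); a sign choice in the twisted-module convention would at most replace $\Phi_{(1)}$ by $\Phi_{(n-1)}$, i.e.\ each root by its negative, which leaves the diagram unchanged.

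I expect the main obstacle to be the bookkeeping in the non-simply-laced cases: one must check that the normalisation of $u$ makes the level an integer reproducing the stated order $l_ih_i^\vee$, and that neither higher short roots nor negative roots accidentally land at level $1$. The cleanest safeguard is the level bound $n-l_i$ on positive roots together with the coefficient analysis above, which together exclude all spurious contributions.
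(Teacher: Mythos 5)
Your proposal is correct and takes essentially the same route as the paper's proof: both identify $\Phi(g)$ with the $\e^{2\pi\i/n}$-eigenspace of the explicit inverse-orbifold automorphism $\sigma_u$, deduce that only ideals with $l_ih_i^\vee=n$ can contribute, and then determine that eigenspace ideal by ideal. The only difference is one of detail: where the paper cites Proposition~8.6~c) of \cite{Kac90} together with the type of $\sigma_u|_{\g_i}$ computed in \cite{ELMS21}, you carry out that eigenspace identification by hand via the Weyl-vector level function (height in the simply-laced case, short-simple-root analysis otherwise), which is a worked-out version of the same argument.
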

The order of $\sigma_u$ on each simple ideal $\g_i$ is $l_ih_i^\vee$ so that the order of $\sigma_u$ on $(V_{\Lambda}^{\orb(g)})_1$ is $\lcm(\{l_ih_i^\vee\}_{i=1}^r)$, which can be shown to equal the order $n$ of $\sigma_u$ on the whole \voa{} $V_{\Lambda}^{\orb(g)}$. The proposition states in particular that only those simple ideals contribute to the Dynkin diagram $\Phi(g)$, on which $\sigma_u$ assumes its order.
\begin{proof}
Recall that the inverse orbifold automorphism acts on $ (W_{\Lambda}^{(1,0)})_1 = V_{\Lambda}(g)_1$ as multiplication by $\e^{2\pi\i/n}$. Hence, the simple ideal $\g_i$ can only contribute to $(V_{\Lambda}(g))_1$ if the order of $\sigma_u$ restricted to $\g_i$, which is $l_ih_i^\vee$, equals $n$. On a simple ideal where this is the case, the eigenspace for the eigenvalue $\e^{2\pi\i/n}$ is now determined following Proposition~8.6~c) in \cite{Kac90}. For this one uses the type (in the language of \cite{Kac90}) of $\sigma_u$ restricted to $\g_i$, which is described in the proof of Proposition~5.1 in \cite{ELMS21}.
\end{proof}

The special case of the proposition for types $A$, $D$ and $E$ was already discussed in \cite{LS20} (see Lemma 2.6).

\medskip

From what we have seen so far, the Dynkin diagram $\Phi(g)$ of a \gdh{} could in principle be empty. The following is immediate:
\begin{cor}
Let $g$ be a \gdh{} of $V_\Lambda$ of order $n>1$ with $\rk((V_{\Lambda}^g)_1)>0$. Then the following are equivalent:
\begin{enumerate}
\item The Dynkin diagram $\Phi(g)$ is non-empty,
\item the set of shifted weights $\tilde\Pi(g)$ is non-empty,
\item $\rho(V_{\Lambda}(g))=1$,
\item $l_ih_i^\vee=\lcm(\{l_jh_j^\vee\}_{j=1}^r)$ for some $i\in\{1,\ldots,r\}$,
\item $|\sigma_u|=\big|\sigma_u|_{\g_i}\big|$ for some $i\in\{1,\ldots,r\}$.
\end{enumerate}
\end{cor}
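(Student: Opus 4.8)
The plan is to prove the equivalence of the five conditions by establishing a cycle of implications, exploiting the characterisations of $\Phi(g)$, $\tilde\Pi(g)$ and $\rho(V_\Lambda(g))$ developed in the preceding propositions. The starting observation is that conditions (1) and (2) are immediately equivalent: by the definition of $\tilde\Pi(g)=\Pi(g)+h$, the shifted weights are non-empty precisely when $\Pi(g)$ is non-empty, and by \autoref{prop:leechroots} together with the proposition identifying $\Phi(g)$ with the Dynkin diagram of $\Pi(g)$, the diagram $\Phi(g)$ is empty exactly when $\Pi(g)$ is. Thus the first two conditions collapse into one.

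Next I would connect (2) and (3). Recall that for a \gdh{} we have $\rho(V_\Lambda(g))\geq1$, and by \autoref{prop:leechroots} the set $\tilde\Pi(g)$ consists of those $\beta\in\pi_\nu(\Lambda)$ with $\langle\beta-h,\beta-h\rangle/2=1-\rho_\nu$; these are the closest vectors to $h$ in $\pi_\nu(\Lambda)$ precisely when $\rho(V_\Lambda(g))=1$, since the conformal weight of $V_\Lambda(g)$ equals $\rho_\nu+\min_{\beta\in\pi_\nu(\Lambda)}\langle\beta-h,\beta-h\rangle/2$. Hence $\tilde\Pi(g)\neq\emptyset$ forces the minimal-norm coset vectors to have norm exactly $2(1-\rho_\nu)$, which is the same as $\rho(V_\Lambda(g))=1$; conversely $\rho(V_\Lambda(g))=1$ yields shortest vectors realising this norm, so $\tilde\Pi(g)$ is non-empty. (One must also note that $d(\nu)=1$ here, which follows from extremality.)

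The remaining equivalences (3) $\Leftrightarrow$ (4) $\Leftrightarrow$ (5) are where the explicit description of the inverse-orbifold automorphism enters. By \autoref{prop:invtype} and the surrounding discussion, the order of $\sigma_u$ on the simple ideal $\g_i$ is $l_ih_i^\vee$, and the order of $\sigma_u$ on the whole \voa{} equals $n=\lcm(\{l_jh_j^\vee\}_{j=1}^r)$. Since $\sigma_u$ acts on $V_\Lambda(g)_1=(W_\Lambda^{(1,0)})_1$ as multiplication by $\e^{2\pi\i/n}$, an ideal $\g_i$ contributes to $V_\Lambda(g)_1$ if and only if $l_ih_i^\vee=n$, which is condition (4), manifestly the same as condition (5) rewritten as $|\sigma_u|=\big|\sigma_u|_{\g_i}\big|$. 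Thus $\Phi(g)$ is non-empty if and only if at least one ideal attains the overall order, giving (1) $\Leftrightarrow$ (4) $\Leftrightarrow$ (5). I expect the main subtlety to lie not in any single implication but in carefully justifying that the equality $\rho(V_\Lambda(g))=1$ is both the geometric condition on closest lattice vectors and the representation-theoretic condition that the eigenspace of $\sigma_u$ for $\e^{2\pi\i/n}$ is non-trivial; these two viewpoints are reconciled exactly through \autoref{prop:invtype}, so the corollary is essentially a packaging of that proposition together with \autoref{prop:leechroots}, and the proof should be short.
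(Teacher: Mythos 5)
Your proof is correct and takes essentially the same route the paper intends: the paper declares this corollary immediate, and it is exactly the combination you give, namely \autoref{prop:leechroots} for $(1)\Leftrightarrow(2)$, the conformal-weight formula $\rho(V_\Lambda(g))=\rho_\nu+\min_{\beta\in\pi_\nu(\Lambda)}\langle\beta-h,\beta-h\rangle/2$ together with the extremality bound $\rho(V_\Lambda(g))\geq1$ for $(2)\Leftrightarrow(3)$, and \autoref{prop:invtype} with $\big|\sigma_u|_{\g_i}\big|=l_ih_i^\vee$ and $|\sigma_u|=n=\lcm(\{l_jh_j^\vee\}_{j=1}^r)$ for the equivalence with $(4)$ and $(5)$. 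The one inaccuracy is your parenthetical that $d(\nu)=1$ ``follows from extremality'': it actually follows from the existence of a coset vector of norm $2(1-\rho_\nu)$ combined with the one-dimensionality of root spaces (the argument inside the proof of \autoref{prop:leechroots}), and since the paper defines $\tilde\Pi(g)$ directly by the norm condition, this remark is not even needed.
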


\medskip

We now discuss the special case of $g$ being an inner automorphism. In this case, we exactly recover the classical hole diagrams in \cite{CPS82}:
\begin{prop}\label{prop:niemeiercase}
Let $g$ be a \gdh{} of $V_\Lambda$ of order $n>1$ with $\rk((V_{\Lambda}^g)_1)>0$. Assume that $g$ is inner. Then $g=\sigma_h$ for some deep hole $h\in {\Lambda \otimes_\Z\Q}$ corresponding to the Niemeier lattice $N$. Let $\tilde{N}$ be the extended affine Dynkin diagram corresponding to $N$, which is the hole diagram of $h$. Then $V_{\Lambda}^{\orb(g)} \cong V_N$ and $g$ has the generalised hole diagram $(1^{24}, \tilde{N})$.
\end{prop}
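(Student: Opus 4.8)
The plan is to reduce the whole statement to the classical geometry of the Leech lattice, using that an inner automorphism projects to $\id\in\O(\Lambda)$. First I would record what "inner" buys us: by \autoref{prop:leechconj} we may take the rotation part to be $\nu=\id$, so that $\pi_\nu=\id$, $\pi_\nu(\Lambda)=\Lambda$, the defect is $d(\nu)=1$, and the vacuum anomaly is $\rho_\nu=\frac{1}{24}\sum_{t}b_t(t-1/t)=0$. Writing $g=\sigma_h=\e^{2\pi\i h_0}$ with $h\in\Lambda\otimes_\Z\Q$, the zero mode $h_0$ annihilates the abelian Cartan $\h_\Lambda=(V_\Lambda)_1$ (one has $[h(0),h'(-1)]=0$ and $h(0)\ee_0=0$), so $\sigma_h$ fixes $(V_\Lambda)_1$ pointwise and $(V_\Lambda^g)_1=\h_\Lambda$ has rank $24$. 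The rank condition in the definition of a \gdh{} then forces $\rk((V_\Lambda^{\orb(g)})_1)=24$, and since $n>1$ the Lie algebra $\g=(V_\Lambda^{\orb(g)})_1$ is semisimple of full rank $24$.

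Next I would convert extremality into a metric statement. As $g$ is a \gdh{} we have $\rho(V_\Lambda(g))\geq1$, and with $\rho_\nu=0$ the conformal-weight formula reads $\min_{\beta\in\Lambda}\langle\beta-h,\beta-h\rangle/2\geq1$, i.e.\ $h$ lies at distance at least $\sqrt2$ from $\Lambda$. The covering radius of the Leech lattice equals $\sqrt2$ \cite{CPS82}, so the distance is exactly $\sqrt2$ and $h$ is a deep hole. By \autoref{prop:leechroots} the shifted weights $\tilde\Pi(g)=\{\beta\in\Lambda\mid\langle\beta-h,\beta-h\rangle=2\}$ are then precisely the lattice points closest to $h$, that is, the vertices of the classical hole diagram; by \cite{CPS82,Bor85b} this diagram is the extended affine Dynkin diagram $\tilde N$ of a uniquely determined Niemeier lattice $N$ with roots, and the edge rules of \autoref{prop:leechroots2} specialised to $\rho_\nu=0$ reproduce those of \cite{CPS82} (all components simply-laced, consistent with \autoref{prop:leechroots}). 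Since $g$ projects to $\id\in\O(\Lambda)$, its cycle shape is $1^{24}$, so the generalised hole diagram is $(1^{24},\tilde N)$.

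Finally I would identify the orbifold \voa{}. The affine diagram $\Phi(g)=\tilde N$ has underlying finite root system $R_N$, so $\g$ is semisimple of rank $24$ with simply-laced root system $R_N$; the trace identity \eqref{eq:221} together with the Niemeier relation $|R_N|=24\,h^\vee$ (valid since the components of a Niemeier root system share the common dual Coxeter number $h^\vee=n$) forces every affine level to be $1$. Hence the \vosa{} generated by $\g$ is the level-one simply-laced affine \voa{}, which by the Frenkel--Kac construction \cite{FLM88} is the lattice \voa{} $V_Q$ of the root lattice $Q$ of $R_N$, and already has central charge $24$. Therefore $V_\Lambda^{\orb(g)}$ is a holomorphic extension of $V_Q$ of equal central charge, hence $V_\Lambda^{\orb(g)}\cong V_{Q'}$ for some even unimodular overlattice $Q'\supseteq Q$, and the unique such rank-$24$ lattice whose roots are exactly $R_N$ is the Niemeier lattice $N$; thus $V_\Lambda^{\orb(g)}\cong V_N$.

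The hard part is entirely geometric and enters as a black box: the facts that the covering radius of $\Lambda$ is $\sqrt2$ and that the closest-vector diagrams at deep holes are exactly the $23$ extended Niemeier diagrams in bijection with the Niemeier lattices with roots are the substantial classification results of \cite{CPS82,Bor85b}, which I would cite rather than reprove. By contrast, once $h$ is seen to be a deep hole, the passage to $V_N$ is routine, relying only on the fact that a holomorphic \voa{} of central charge $24$ whose weight-one space is semisimple of full rank at level one is a Niemeier lattice \voa{}.
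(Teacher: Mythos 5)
Your proof is correct, and its core coincides with the paper's own argument: inner gives $g=\sigma_h$ with $h\in\Lambda\otimes_\Z\Q$; extremality gives $\rho(V_\Lambda(g))\geq 1$, which for cycle shape $1^{24}$ (so $\rho_\nu=0$) means $h$ has distance at least $\sqrt{2}$ from $\Lambda$; since the covering radius of $\Lambda$ is $\sqrt{2}$, the vector $h$ is a deep hole, and \cite{CPS82} together with \autoref{prop:leechroots2} identifies $\tilde\Pi(g)$ with the classical hole diagram $\tilde{N}$. The only divergence is in the final claim $V_\Lambda^{\orb(g)}\cong V_N$: the paper folds this into the citation of \autoref{prop:leechroots2} and \cite{CPS82} (implicitly, deep holes with the same diagram are equivalent, and the \gdh{} producing $V_N$ under \autoref{thm:bij} is inner with diagram $\tilde{N}$ by \autoref{prop:invtype}), whereas you reconstruct the orbifold intrinsically: levels forced to $1$ by \eqref{eq:221} and the Niemeier relation $|R_N|=24h^\vee$, then Frenkel--Kac, passage to an even unimodular overlattice, and uniqueness of the Niemeier lattice with root system $R_N$. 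Both routes are legitimate; yours avoids invoking the surjectivity of the holy correspondence for this step, but leans instead on Niemeier's uniqueness-by-root-system, which is available from the cited lattice literature either way. One step in your version needs justification rather than assertion: from $\Phi(g)=\tilde{N}$ you conclude that the root system of $(V_\Lambda^{\orb(g)})_1$ is \emph{exactly} $R_N$, but a priori a full-rank root subsystem can sit properly inside a larger root system, and extra simple ideals or non-simply-laced ideals are not excluded by fiat. The gap is closed by the paper's (unnumbered) proposition that $\Phi(g)$ is a subdiagram of the extended Dynkin diagram of the root system of $(V_\Lambda^{\orb(g)})_1$: applied ideal by ideal, a rank and vertex count shows every simple ideal carries exactly one affine component of $\tilde{N}$, which must exhaust that ideal's extended diagram, so each ideal is simply-laced of the corresponding type and nothing is left over.
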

\begin{proof}
Since $g$ is inner, $g=\sigma_h$ for some $h\in\Lambda\otimes_\Z\Q$. The extremality of $g$ implies that $\rho(V(g))\geq1$. But the covering radius of the Leech lattice $\Lambda$ is $\sqrt{2}$, so that
\begin{equation*}
\rho(V(g))=\min_{\beta\in\Lambda}\frac{\langle\beta-h,\beta-h\rangle}{2}=1,
\end{equation*}
i.e.\ $h$ is a deep hole of $\Lambda$. The remaining claims follow from \autoref{prop:leechroots2} and the results in \cite{CPS82}.
\end{proof}


\section{Classification of \GDH{}s}\label{sec:gdhclass}

In this section we classify the \gdh{}s of the Leech lattice \voa{} by enumerating the corresponding generalised hole diagrams. As a consequence we obtain a new, geometric classification of the \strathol{} \voa{}s of central charge $24$ with non-trivial weight-$1$ space, which is independent of Schellekens' results.

\medskip

The possible generalised hole diagrams are strongly restricted by the following result (see Lemma 6.1 in \cite{ELMS21}):
\begin{prop}\label{prop:conditions}
Let $V$ be a \strathol{} \voa{} of central charge $24$ with $V_1$ semisimple. Let $\g_{1,k_1}\ldots\g_{r,k_r}$ denote the affine structure (with dual Coxeter numbers $h_i^\vee$ and lacing numbers $l_i$). Then
\begin{enumerate}
\item $h_i^\vee/k_i=(\dim(V_1)-24)/24$
\end{enumerate}
for all $i=1,\ldots,r$, and there exists a $\nu\in\O(\Lambda)$ such that
\begin{enumerate}[resume]\itemsep1.5mm
\item $\rk(\Lambda^\nu)=\rk(V_1)$,\label{item:cond2}
\item $|\nu|\,\big|\,\lcm(\{l_ih_i^\vee\}_{i=1}^r)$,\label{item:cond3}
\item $1/(1-\rho_\nu)=\lcm(\{l_ik_i\}_{i=1}^r)$.\label{item:cond4}
\end{enumerate}
\end{prop}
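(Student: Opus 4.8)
The proposition collects four necessary conditions that the affine structure of a strongly rational, holomorphic VOA $V$ of central charge $24$ with semisimple $V_1$ must satisfy, the last three involving some $\nu \in \O(\Lambda)$. The plan is to obtain such a $\nu$ directly from the holy correspondence (\autoref{thm:bij}): since $V_1 \neq \{0\}$, there is a \gdh{} $g \in \Aut(V_\Lambda)$ with $V_\Lambda^{\orb(g)} \cong V$, and I take $\nu$ to be the image of $g$ under the projection $\Aut(V_\Lambda) \to \O(\Lambda)$, i.e.\ $g = \phi_\eta(\nu)\sigma_h$ up to conjugacy by \autoref{prop:leechconj}. All four conditions then become statements about this $\nu$ and the inverse-orbifold data.

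\textbf{The four conditions.} Condition (1) is just the trace identity \eqref{eq:221}, which holds for any strongly rational, holomorphic VOA of central charge $24$ with semisimple $V_1$, independently of the orbifold picture. For condition \eqref{item:cond2}, I would use that $\h = (V_\Lambda^g)_1 \cong \pi_\nu(\h_\Lambda)$ is a Cartan subalgebra of $\g = (V_\Lambda^{\orb(g)})_1 = V_1$ (this is exactly the rank condition built into the definition of a \gdh{}), so $\rk(V_1) = \dim_\C \pi_\nu(\h_\Lambda) = \rk(\Lambda^\nu)$. For conditions \eqref{item:cond3} and \eqref{item:cond4}, I would invoke the explicit description of the inverse-orbifold automorphism from \cite{ELMS21} recalled at the end of \autoref{sec:24}: it is $\sigma_u$ with $u = \sum_i \rho_i/h_i^\vee$, and its order on $V_1$ equals $n = |g|$, namely $\lcm(\{l_i h_i^\vee\}_{i=1}^r)$. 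Since $m = |\nu|$ divides $n = |g|$, condition \eqref{item:cond3} is immediate. Condition \eqref{item:cond4} should follow by comparing the type $t = 0$ normalisation with the relation between the vacuum anomaly $\rho_\nu$ and the levels: the orbifold order $n$ on the whole VOA, together with $k_i = h_i^\vee \cdot 24/(\dim(V_1)-24)$ from (1), pins down $1 - \rho_\nu$ in terms of $\lcm(\{l_i k_i\})$.

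\textbf{Main obstacle.} The routine parts are (1) and \eqref{item:cond2}; the delicate step is \eqref{item:cond4}, the identity $1/(1-\rho_\nu) = \lcm(\{l_i k_i\}_{i=1}^r)$, because it links a geometric quantity of the Leech lattice (the vacuum anomaly $\rho_\nu$, determined by the cycle shape of $\nu$) to the representation-theoretic levels $k_i$ of the target VOA. I expect this to require the Corollary just proved: for a \gdh{} with non-empty diagram one has $\rho(V_\Lambda(g)) = 1$, and by the conformal-weight formula of \autoref{sec:lat} the lattice-coset term then contributes exactly $1 - \rho_\nu$. The key is that this shortest-vector contribution equals the conformal weight $ij/n \bmod 1$ of the relevant simple-current module $W^{(1,0)}$, whose denominator is governed by the levels through the affine fusion rules. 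I would therefore trace through how the conformal weight $\rho(W^{(1,0)})$ is computed on each affine factor $L_{\hat\g_i}(k_i,0)$, where the minimal positive conformal weight of a simple current is controlled by $l_i k_i$, and verify that taking the least common multiple reproduces $1/(1-\rho_\nu)$. This matching of denominators is the technical heart of the argument, and the cleanest route is likely to cite the corresponding computation in the proof of Proposition~5.1 in \cite{ELMS21} rather than redo it.
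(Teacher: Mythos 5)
Your overall skeleton is the paper's own: the paper gives no self-contained argument but cites Lemma~6.1 of \cite{ELMS21}, remarks that $\nu$ is the projection of the \gdh{} supplied by the holy correspondence (\autoref{thm:bij}), that condition (1) is the trace identity \eqref{eq:221}, and that the remaining conditions follow from that bijection. Your derivations of (1), of (2) (from the rank condition $\rk(V_1^{\orb(g)})=\rk(V_1^g)$ together with $(V_\Lambda^g)_1\cong\pi_\nu(\h_\Lambda)$), and of (3) (from $|\nu|\mid|g|=n=\lcm(\{l_ih_i^\vee\}_{i=1}^r)$, using the explicit inverse-orbifold automorphism $\sigma_u$) are correct and are exactly what that citation encapsulates.

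The gap is in condition \eqref{item:cond4}. First, the mechanism you sketch is wrong as stated: $W^{(1,0)}$ has conformal weight $\equiv 1\cdot0/n\equiv0\pmod 1$, so the shortest-vector contribution $1-\rho_\nu$ (generically not an integer) cannot ``equal the conformal weight $ij/n\bmod 1$'' of that module, and the minimal conformal weights of simple currents of $L_{\hat\g_i}(k_i,0)$ are not governed by $l_ik_i$. The correct local statement is about root norms: when $\Phi(g)\neq\emptyset$, the weights in $\Pi(g)$ have norm $2(1-\rho_\nu)$ on the lattice side (\autoref{prop:leechroots}), while on the Lie side (\autoref{prop:invtype}) they are short roots of the ideals $\g_i$ with $l_ih_i^\vee=n$, hence of norm $(2/l_i)/k_i=2/(l_ik_i)$ under $\langle\cdot,\cdot\rangle|_{\g_i}=k_i(\cdot,\cdot)$; equating the two and using (1) (which makes $l_jk_j$ proportional to $l_jh_j^\vee$, so that $l_ik_i=\lcm(\{l_jk_j\}_{j=1}^r)$ for any $i$ attaining the lcm of the $l_jh_j^\vee$) yields \eqref{item:cond4}. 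Second, and more seriously, this whole route is conditional on $\Phi(g)\neq\emptyset$, i.e.\ on some $i$ attaining $l_ih_i^\vee=n$. At this point in the paper that is not available: the non-emptiness of $\Phi(g)$ for $g\neq\id$ is only observed \emph{after} \autoref{prop:13}, i.e.\ downstream of the very proposition being proved, so invoking it here is circular. A complete proof must either establish non-emptiness a priori or prove \eqref{item:cond4} by a route that does not need it --- which is what the cited Lemma~6.1 of \cite{ELMS21} supplies; note also that your fallback citation, Proposition~5.1 of \cite{ELMS21}, only records the Kac type of $\sigma_u|_{\g_i}$ and is an ingredient of, not a substitute for, that lemma.
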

The automorphism $\nu$ is exactly the projection $\Aut(V_\Lambda)\to\O(\Lambda)$ of the \gdh{} $g$ corresponding to $V$ by \autoref{thm:bij}. Recall that $\rho_\nu$ denotes the vacuum anomaly of $\nu$ and only depends on the cycle shape of $\nu$.

The first equation is Schellekens' lowest-order trace identity \eqref{eq:221}. The other conditions follow from the bijection in \autoref{thm:bij}.

It is straightforward to list all solutions, i.e.\ pairs of affine structures and automorphisms of the Leech lattice $\Lambda$, to the equations in \autoref{prop:conditions} (see Proposition 6.2 in \cite{ELMS21}):
\begin{prop}\label{prop:13}
There are exactly $82$ pairs of affine structures and conjugacy classes in $\O(\Lambda)$ satisfying the four equations in \autoref{prop:conditions}. These are the $69$ cases described in \autoref{table:70} plus the $13$ spurious cases listed in \autoref{table:13}.
\end{prop}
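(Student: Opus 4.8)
The plan is to turn this into a finite, mechanical enumeration anchored on the two already-classified finite lists: the solutions of the lowest-order trace identity and the conjugacy classes of $\O(\Lambda)\cong\Co_0$. By Schellekens' trace identity, condition~(1) of \autoref{prop:conditions} is independent of $\nu$ and depends only on the affine structure $\g_{1,k_1}\ldots\g_{r,k_r}$; as recalled after \eqref{eq:221}, it has exactly $221$ solutions, which are tabulated (Table~3 in \cite{ELMS21}). I would take this list as the starting point and, for each of the $221$ affine structures, record the invariants $\dim(V_1)=\sum_i\dim(\g_i)$, $\rk(V_1)=\sum_i\rk(\g_i)$, the dual Coxeter numbers $h_i^\vee$, the levels $k_i$ and the lacing numbers $l_i$. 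From these one immediately computes the two quantities $L:=\lcm(\{l_ih_i^\vee\}_{i=1}^r)$ and $K:=\lcm(\{l_ik_i\}_{i=1}^r)$ that enter conditions~(3) and~(4).

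The key observation is that the remaining three conditions constrain $\nu$ only through data that depend on its conjugacy class in $\O(\Lambda)\cong\Co_0$, and that this is a finite, well-documented list together with the corresponding Frame (cycle) shapes. Condition~(4) reads $\rho_\nu=1-1/K$, and since the vacuum anomaly $\rho_\nu=\tfrac{1}{24}\sum_{t\mid m}b_t(t-1/t)$ is a function of the Frame shape $\prod_{t\mid m}t^{b_t}$ alone, it selects, for each affine structure, the finite set of $\Co_0$-classes whose shape produces this prescribed value. The fixed-point rank in condition~(2) is likewise read off the Frame shape: since $(x-1)$ divides $x^t-1$ exactly once, the order of vanishing of $\prod_{t\mid m}(x^t-1)^{b_t}$ at $x=1$ gives $\rk(\Lambda^\nu)=\sum_{t\mid m}b_t$. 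Finally the order $m=|\nu|$ entering condition~(3) is the support of the shape. Thus, matching each affine structure against the complete list of conjugacy classes of $\Co_0$ reduces the whole problem to checking, for finitely many candidate pairs $(\g_{1,k_1}\ldots\g_{r,k_r},\nu)$, the three numerical conditions $\rho_\nu=1-1/K$, $\sum_t b_t=\rk(V_1)$ and $m\mid L$.

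Carrying out this filtering and collecting the surviving pairs, I expect to obtain exactly $82$ solutions. Of these, $69$ should reproduce the affine structures of the known \strathol{} \voa{}s of central charge $24$ (the semisimple entries of \autoref{table:70}, equivalently Schellekens' $71$ Lie algebras minus the zero and abelian cases, which are excluded by $n>1$), each paired with its correct projection $\nu$; the remaining $13$ survive the combinatorial test but will later be excluded by the geometric covering-radius arguments, and these are the spurious cases of \autoref{table:13}. The main obstacle is not conceptual but one of completeness and bookkeeping: the hard part will be certifying that the list of $221$ affine structures and the list of $\Co_0$-classes with their Frame shapes are exhaustive and correctly tabulated, so that no admissible pair is overlooked and none is double-counted. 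Since both lists are finite and fixed, the count $82=69+13$ then follows by direct verification.
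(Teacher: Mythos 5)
Your proposal is correct and is essentially the paper's own argument: the paper treats \autoref{prop:13} as a straightforward finite enumeration (deferred to Proposition~6.2 in \cite{ELMS21}), matching the $221$ solutions of the trace identity~\eqref{eq:221} against the conjugacy classes of $\O(\Lambda)\cong\Co_0$, whose cycle shapes determine $\rho_\nu$, $\rk(\Lambda^\nu)=\sum_t b_t$ and $|\nu|$ exactly as you describe. The only caveat, which you yourself flag, is that the result ultimately rests on the completeness of the two tabulated lists and on actually carrying out the filtering; this is also all the paper relies on.
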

\begin{table}[ht]
\caption{$13$ spurious cases in \autoref{prop:13}.}
\[
\begin{array}{lrcrlll}
\nu\in\O(\Lambda) & |\phi_\eta(\nu)|&  \rho_\nu & n & \text{Aff.\ Struct.} & \Phi(g) & \text{Norms}\\\hline\hline
6^4          & 12 & 35/36 &  6 & D_{4,36}                & \tilde{D}_4      & 2/18, 2/12 \\\hline
\multirow{2}{*}{$4^6$} & \multirow{2}{*}{$8$} & \multirow{2}{*}{$15/16$}
                          &  4 & A_{3,16}^2              & \tilde{A}_3^2    & 2/8, 6/16 \\
             &    &       &  8 & C_{3,8}A_{3,8}          & A_2              & 6/16 \\\hline
\multirow{3}{*}{$3^8$} & \multirow{3}{*}{$3$} & \multirow{3}{*}{$8/9$}
                          &  3 & A_{2,9}^4               & \tilde{A}_2^4    & 4/9, 2/3 \\
             &    &       &  6 & D_{4,9}A_{1,3}^4        & \tilde{D}_4      & 4/9, 2/3 \\
             &    &       & 12 & G_{2,3}^4               & A_1^4            & 4/9 \\\hline
\multirow{2}{*}{$2^44^4$} & \multirow{2}{*}{$4$} & \multirow{2}{*}{$7/8$}
                          &  4 & A_{3,8}^2A_{1,4}^2      & \tilde{A}_3^2    & 2/4, 6/8 \\
             &    &       &  8 & C_{3,4}^2A_{1,2}^2      & A_2^2            & 2/4, 6/8 \\\hline
1^22^23^26^2 &  6 & 5/6   &  6 & D_{4,6}B_{2,3}^2        & \tilde{D}_4A_1^2 & 2/3, 2/2 \\\hline
\multirow{4}{*}{$2^{12}$} & \multirow{4}{*}{$4$} & \multirow{4}{*}{$3/4$}
                          &  4 & A_{3,4}^2A_{1,2}^6      & \tilde{A}_3^2    & 2/2, 6/4 \\
             &    &       &  8 & D_{5,4}A_{3,2}A_{1,1}^4 & \tilde{D}_5      & 2/2, 6/4 \\
             &    &       &  8 & C_{3,2}^3A_{1,1}^3      & A_2^3            & 2/2, 6/4 \\
             &    &       &  8 & C_{3,2}^2A_{3,2}^2      & A_2^3            & 2/2, 6/4
\end{array}
\]
\label{table:13}
\end{table}
Note that there is no affine structure that appears in more than one pair. By \autoref{prop:invtype}, the affine structure fixes the generalised hole diagram of the corresponding \gdh{} $g$. However, there could still be multiple non-conjugate \gdh{}s for a given pair (or generalised hole diagram).

We observe that, except for $g=\id$, the Dynkin diagram $\Phi(g)$ of a \gdh{} is never empty.

\begin{lem}
There are no \gdh{}s in $\Aut(V_\Lambda)$ corresponding to the eight spurious cases in \autoref{table:13} with cycle shapes $6^4$, $4^6$, $3^8$ and $2^44^4$.
\end{lem}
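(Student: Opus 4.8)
The plan is to dispose of all eight cases simultaneously by means of the covering‑radius criterion recorded in \autoref{sec:holediag}: if the covering radius of $\pi_\nu(\Lambda)$ is \emph{strictly} smaller than $\sqrt{2(1-\rho_\nu)}$, then no extremal automorphism of $V_\Lambda$—and \emph{a fortiori} no \gdh{}—can project to $\nu$. The eight rows in \autoref{table:13} with cycle shapes $6^4,4^6,3^8,2^44^4$ involve only these four classes $\nu$, none of which occurs among the eleven projections of genuine \gdh{}s listed in the $\Co_0$‑projection theorem of the introduction. Hence it suffices to prove the single inequality $R_{\mathrm{cov}}(\pi_\nu(\Lambda))<\sqrt{2(1-\rho_\nu)}$ for each of the four $\nu$; this eliminates every row with these cycle shapes at once, irrespective of the orbifold order $n$ or of the predicted affine structure and diagram.

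First I would record the thresholds. From $\rho_\nu=\tfrac1{24}\sum_t b_t(t-1/t)$ one computes $\rho_\nu=35/36,\,15/16,\,8/9,\,7/8$, so that $2(1-\rho_\nu)=1/18,\,1/8,\,2/9,\,1/4$ respectively. Next I would identify the four projection lattices $\pi_\nu(\Lambda)$ explicitly, using the $\O(\hat\Lambda)$‑conjugacy description of \autoref{prop:leechconj} and the glue data of \cite{MS19}. The structural point on which the argument turns is that $\pi_\nu(\Lambda)$ is strictly finer than the fixed‑point lattice $\Lambda^\nu$: equality would force $\Lambda=\Lambda^\nu\oplus\Lambda_\nu$ to split orthogonally, which does not occur for these classes. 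Consequently $R_{\mathrm{cov}}(\pi_\nu(\Lambda))<R_{\mathrm{cov}}(\Lambda^\nu)$, strictly.

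With the lattices in hand I would compute the covering radii and compare them with the thresholds. I expect each fixed‑point lattice $\Lambda^\nu$ to be exactly \emph{borderline}, with $R_{\mathrm{cov}}(\Lambda^\nu)^2=2(1-\rho_\nu)$ (for instance, for $3^8$ one expects $\Lambda^\nu$ to be a rescaled $E_8$, whose deep holes have diagram $\tilde E_8$ and lie at precisely this radius). The decisive input is then the strict refinement $\pi_\nu(\Lambda)\supsetneq\Lambda^\nu$ noted above, which pulls the covering radius strictly below the threshold; feeding this into the criterion rules out every extremal automorphism projecting to $\nu$, and the lemma follows. (As an alternative quick route, should one of these classes have defect $d(\nu)>1$, \autoref{prop:leechroots} already forces $\Pi(g)=\emptyset$, contradicting the non‑empty diagram predicted by \autoref{prop:invtype}.)

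The main obstacle is the explicit determination of the four projection lattices together with their covering radii, and in particular establishing the \emph{strict} inequality in the borderline situation: since $\Lambda^\nu$ sits exactly at the critical radius, no crude estimate will do, and one must exhibit a genuine extra glue vector in $\pi_\nu(\Lambda)\setminus\Lambda^\nu$ and check that it strictly improves the covering. If for some class the refinement fails to lower the covering radius below the threshold, deep holes survive at the critical radius, and I would fall back on a Conway–Parker–Sloane–style classification of those deep holes—reconstructing the centre $h$ from an affine component via \autoref{prop:centre}—to verify directly that their hole diagrams never match the types $\tilde D_4,\tilde A_3^2,\tilde A_2^4$ predicted by \autoref{prop:invtype}, treating the finite‑type rows ($A_2$, $A_2^2$, $A_1^4$) by the same closest‑vector count.
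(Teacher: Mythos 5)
Your primary route rests on a claim that is false. Take $\nu$ of cycle shape $3^8$ (which accounts for three of the eight rows): here $\Lambda^\nu\cong\sqrt{3}E_8$ and hence $\pi_\nu(\Lambda)=(\Lambda^\nu)'\cong\tfrac{1}{\sqrt{3}}E_8$. Since $E_8$ has covering radius $1$, the covering radius $R$ of $\pi_\nu(\Lambda)$ satisfies $R^2=1/3$, which is \emph{larger} than $2(1-\rho_\nu)=2/9$. So the covering-radius criterion of \autoref{sec:holediag} simply does not apply to this class, and your argument collapses there. Your supporting heuristics fail as well: $\Lambda^\nu=\sqrt{3}E_8$ has squared covering radius $3$, nowhere near your ``borderline'' value $2(1-\rho_\nu)=2/9$; and the inference ``proper refinement $\pi_\nu(\Lambda)\supsetneq\Lambda^\nu$ forces the covering radius strictly below a prescribed threshold'' is not valid---refinement only gives a non-strict inequality, with no quantitative control. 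Note also that invoking the projection-to-$\Co_0$ theorem to motivate the claim is circular, since that theorem is a consequence of the classification this lemma feeds into. Finally, your fallback is misdirected: the paper stresses that the centre $h$ of a generalised hole diagram is in general \emph{not} a hole (deep or otherwise) of $\pi_\nu(\Lambda)$, so a Conway--Parker--Sloane classification of deep holes is the wrong search space; moreover for the finite-type rows ($A_1^4$ for $3^8$, $A_2$ for $4^6$, $A_2^2$ for $2^44^4$) \autoref{prop:centre} gives no reconstruction of the centre, so the ``same closest-vector count'' is not available as described.

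The paper's actual proof is a two-line arithmetic argument requiring no covering radii and no computation. For a standard lift $\phi_\eta(\nu)$ one has $\langle\beta,\beta\rangle/2\in(1/|\phi_\eta(\nu)|)\Z$ for every $\beta\in\pi_\nu(\Lambda)$, where $|\phi_\eta(\nu)|$ accounts for order doubling. By \autoref{prop:invtype} the diagram of a hypothetical \gdh{} in each of the eight rows is forced, and by \autoref{prop:leechroots2} that diagram prescribes pairs of vertices of $\tilde\Pi(g)$ whose differences---which are vectors of $\pi_\nu(\Lambda)$---must have norm $4(1-\rho_\nu)$ (no edge) or $6(1-\rho_\nu)$ (single edge). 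In every one of the eight rows at least one of these values lies outside $(2/|\phi_\eta(\nu)|)\Z$: for example, for $3^8$ the diagram $\tilde{A}_2^4$ needs differences of norm $4/9\notin(2/3)\Z$ between distinct $\tilde{A}_2$-components, and for $6^4$ the diagram $\tilde{D}_4$ needs leaf-to-leaf differences of norm $1/9\notin(1/6)\Z$. This contradiction is exactly what the ``Norms'' column of \autoref{table:13} records. Of your proposal, only the computation of the thresholds $2(1-\rho_\nu)$ survives into this argument.
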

\begin{proof}
We write the potential \gdh{} as $g=\phi_\eta(\nu)\sigma_h$ where $\phi_\eta(\nu)$ is a standard lift of $\nu\in\O(\Lambda)$. Note that $\langle\beta,\beta\rangle/2\in(1/|\phi_\eta(\nu)|)\Z$ for all $\beta\in\pi_\nu(\Lambda)$. The hole diagrams $\Phi(g)$ are determined by \autoref{prop:invtype} and listed in \autoref{table:13}. Based on \autoref{prop:leechroots2} we can also read off the norms of the differences of the elements in $\tilde\Pi(g)\subseteq\pi_\nu(\Lambda)$. Hence, none of the eight cases in the assertion can occur as in each case not all the computed norms are in $(2/|\phi_\eta(\nu)|)\Z$.
\end{proof}

As a consequence, we are left with $5$ spurious cases, namely those entries in \autoref{table:13} with cycle shapes $1^22^23^26^2$ and $2^{12}$.


\subsection{Affine Case}

Suppose that $g$ is a \gdh{} projecting to $\nu\in\O(\Lambda)$ and that the corresponding set of shifted weights $\tilde\Pi(g)\subseteq\pi_\nu(\Lambda)$ contains a connected affine component $\tilde{X}_l$. Our strategy will be to search for the Dynkin diagram $\tilde{X}_l$ inside $\pi_\nu(\Lambda)$ as lattice points lying on a sphere around some point $h\in\pi_\nu(\Lambda\otimes_\Z\Q)$ of radius $\sqrt{2(1-\rho_\nu)}$ with edges defined as in \autoref{prop:leechroots2} (see also \autoref{fig:diag}). We enumerate the occurrences of $\tilde{X}_l$ in $\pi_\nu(\Lambda)$, more precisely the finitely many orbits under the action of $C_{\O(\Lambda)}(\nu)\ltimes\pi_\nu(\Lambda)$. This can be done by moving one vertex of $\tilde{X}_l$ to the origin and then performing a short vector search in \texttt{Magma} \cite{Magma} (code available at \href{http://arxiv.org/abs/2112.12291}{\nolinkurl{arxiv.org}}). In principle, this could also be done by hand, as is demonstrated in \cite{CPS82} in the case $\nu=\id$. Note that $C_{\O(\Lambda)}(\nu)$ in general only induces a subgroup of $\O(\pi_\nu(\Lambda))$, but in view of \autoref{prop:leechconj} it is important to consider the orbits under $C_{\O(\Lambda)}(\nu)$ rather than the full orthogonal group $\O(\pi_\nu(\Lambda))$.

Then, since $\tilde{X}_l$ is of affine type, its centre $h$ is uniquely determined by the concrete realisation of $\tilde{X}_l$ inside $\pi_\nu(\Lambda)$ (see \autoref{prop:centre}). For each orbit, this immediately yields the complete hole diagram $\tilde{X}_l\ldots$ defined by $h$, which is some Dynkin diagram containing $\tilde{X}_l$ as connected component.

Finally, by \autoref{prop:leechconj}, each \gdh{} in $\Aut(V_\Lambda)$ defining a hole diagram containing $\tilde{X}_l$ in $\pi_\nu(\Lambda)$ must be conjugate to $g=\phi_\eta(\nu)\sigma_h$ where $\phi_\eta(\nu)$ is a standard lift of $\nu$ and $h$ is one of the centres in the finite list of orbits.

\medskip

Now, we go through the potential \gdh{}s in \autoref{table:70} and \autoref{table:13} containing a connected affine component ($54$ plus three spurious cases) and show that the entries of \autoref{table:13} cannot be realised by \gdh{}s while the candidates of \autoref{table:70} by at most one algebraic conjugacy class in $\Aut(V_{\Lambda})$.

We introduce the notation (cf.\ \cite{CPS82})
\begin{equation*}
\tilde{X}_l\implies\tilde{X}_l\ldots
\end{equation*}
to mean that there is a unique (unless otherwise noted) orbit under $C_{\O(\Lambda)}(\nu)\ltimes\pi_\nu(\Lambda)$ of the connected affine diagram $\tilde{X}_l$ in $\pi_\nu(\Lambda)$ (as lattice points sitting on a sphere of radius $\sqrt{2(1-\rho_\nu)}$ around the centre of $\tilde{X}_l$) and that it defines the complete diagram $\tilde{X}_l\ldots$ (\emph{all} the lattice points sitting on said sphere). If there are several orbits, each defining a different diagram $\tilde{X}_l\ldots$, we shall separate these by \emph{or}. If $\tilde{X}_l$ does not appear at all in $\pi_\nu(\Lambda)$, we write $\tilde{X}_l\implies\emptyset$.

\medskip

The first case was already covered in \cite{CPS82}:
\begin{lem}[\cite{CPS82}]\label{lem:deepholes}
Let $\nu\in\O(\Lambda)$ be of cycle shape $1^{24}$. Then in $\pi_\nu(\Lambda)=\Lambda$:
\begin{align*}
\tilde{A}_1&\implies\tilde{A}_1^{24},&                           \tilde{D}_4&\implies\tilde{D}_4^6\text{ or }\tilde{A}_5^4\tilde{D}_4,\\
\tilde{A}_2&\implies\tilde{A}_2^{12},&                           \tilde{D}_5&\implies\tilde{A}_7^2\tilde{D}_5^2,\\
\tilde{A}_3&\implies\tilde{A}_3^8,&                              \tilde{D}_6&\implies\tilde{D}_6^4\text{ or }\tilde{A}_9^2\tilde{D}_6,\\
\tilde{A}_4&\implies\tilde{A}_4^6,&                              \tilde{D}_7&\implies\tilde{A}_{11}\tilde{D}_7\tilde{E}_6,\\
\tilde{A}_5&\implies\tilde{A}_5^4\tilde{D}_4,&                   \tilde{D}_8&\implies\tilde{D}_8^3,\\
\tilde{A}_6&\implies\tilde{A}_6^4,&                              \tilde{D}_9&\implies\tilde{A}_{15}\tilde{D}_9,\\
\tilde{A}_7&\implies\tilde{A}_7^2\tilde{D}_5^2,&                 \tilde{D}_{10}&\implies\tilde{D}_{10}\tilde{E}_7^2,\\
\tilde{A}_8&\implies\tilde{A}_8^3,&                              \tilde{D}_{12}&\implies\tilde{D}_{12}^2,\\
\tilde{A}_9&\implies\tilde{A}_9^2\tilde{D}_6,&                   \tilde{D}_{16}&\implies\tilde{D}_{16}\tilde{E}_8,\\
\tilde{A}_{11}&\implies\tilde{A}_{11}\tilde{D}_7\tilde{E}_6,&    \tilde{D}_{24}&\implies\tilde{D}_{24},\\
\tilde{A}_{12}&\implies\tilde{A}_{12}^2,\\
\tilde{A}_{15}&\implies\tilde{A}_{15}\tilde{D}_9,&               \tilde{E}_6&\implies\tilde{E}_6^4\text{ or }\tilde{A}_{11}\tilde{D}_7\tilde{E}_6,\\
\tilde{A}_{17}&\implies\tilde{A}_{17}\tilde{E}_7,&               \tilde{E}_7&\implies\tilde{D}_{10}\tilde{E}_7^2\text{ or }\tilde{A}_{17}\tilde{E}_7,\\
\tilde{A}_{24}&\implies\tilde{A}_{24},&                          \tilde{E}_8&\implies\tilde{E}_8^3\text{ or }\tilde{D}_{16}\tilde{E}_8.
\end{align*}
In particular, there is at most one \gdh{} in $\Aut(V_\Lambda)$ up to conjugacy projecting to $\nu$ for each of the $23$ non-empty hole diagrams listed in \autoref{table:70}.
\end{lem}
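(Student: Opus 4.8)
The plan is to reduce the statement to the classical enumeration of the deep holes of the Leech lattice carried out by Conway, Parker and Sloane in \cite{CPS82}. Since $\nu$ has cycle shape $1^{24}$, we have $\nu=\id$, so $\pi_\nu(\Lambda)=\Lambda$, $\rho_\nu=0$ and $C_{\O(\Lambda)}(\nu)=\O(\Lambda)\cong\Co_0$. A \gdh{} projecting to $\nu$ is then inner, $g=\sigma_h$ for a deep hole $h$ of $\Lambda$ (as established in the inner-case proposition, extremality together with the covering radius $\sqrt2$ forces $\rho(V_\Lambda(g))=1$). The set of shifted weights $\tilde\Pi(g)$ consists precisely of the lattice points at distance $\sqrt{2(1-\rho_\nu)}=\sqrt2$ from $h$, i.e.\ the closest vectors to the deep hole, and by \autoref{prop:leechroots2} the edges of the diagram are read off from their pairwise distances: since $\rho_\nu=0$, the distances-squared among vertices are $4,6,8$, giving no edge, a single edge and a double edge respectively. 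This is exactly the combinatorial rule defining the classical hole diagram of $h$.

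First I would enumerate, for each connected affine diagram $\tilde X_l$, the orbits under $\Co_0\ltimes\Lambda$ of realisations of $\tilde X_l$ as lattice points on a sphere of radius $\sqrt2$ in $\Lambda\otimes_\Z\R$, with edges as above. Following the strategy described before the lemma, one moves a chosen vertex to the origin and performs a short-vector search in \texttt{Magma} \cite{Magma}, or by hand as in \cite{CPS82}; for each orbit the centre $h$ is recovered from the vertices by \autoref{prop:centre}, yielding the complete diagram $\tilde X_l\ldots$ in which $\tilde X_l$ sits. The displayed list of implications is exactly the output of this enumeration, and it coincides with the classification of the $23$ deep-hole types of $\Lambda$ in \cite{CPS82}, whose hole diagrams are the $23$ affine Dynkin diagrams of the Niemeier lattices with roots. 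Showing that no other orbits occur and identifying the complete diagram attached to each is the substantive geometric part of the argument and the main obstacle; for cycle shape $1^{24}$ it is precisely the content of \cite{CPS82}, which we invoke.

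It remains to deduce the uniqueness statement. Fix one of the $23$ non-empty diagrams $D$ in \autoref{table:70} and suppose $\sigma_{h_1}$ and $\sigma_{h_2}$ are \gdh{}s with $\Phi(\sigma_{h_1})=\Phi(\sigma_{h_2})=D$. Choose a connected (affine) component $\tilde X_l$ of $D$. The vertices of $\tilde\Pi(\sigma_{h_a})$ lying in this component give a realisation of $\tilde X_l$ in $\Lambda$ on a sphere of radius $\sqrt2$, and by \autoref{prop:centre} its centre is $h_a$. By the enumeration, the realisations of $\tilde X_l$ that extend to the complete diagram $D$ form a \emph{single} $\Co_0\ltimes\Lambda$-orbit, since the listed implications attach distinct complete diagrams to distinct orbits (the convention behind the separator \emph{or}). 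Hence some $\gamma\in\Co_0\ltimes\Lambda$ carries the $\tilde X_l$-vertices of $h_1$ to those of $h_2$, and since the centre is determined by these vertices, $\gamma$ carries $h_1$ to $h_2$. By \autoref{prop:leechconj} the conjugacy classes of inner automorphisms $\sigma_h$ correspond to the $\Co_0$-orbits of $h\in\pi_\nu(\h_\Lambda)/\pi_\nu(\Lambda)$, i.e.\ to the $\Co_0\ltimes\Lambda$-orbits of $h\in\Lambda\otimes_\Z\Q$; therefore $\sigma_{h_1}$ and $\sigma_{h_2}$ are conjugate, and there is at most one \gdh{} up to conjugacy projecting to $\nu$ for each of the $23$ diagrams.
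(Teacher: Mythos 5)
Your proposal is correct and follows essentially the same route as the paper: the lemma is precisely the case $\nu=\id$ of the general orbit-enumeration strategy (affine component on a sphere of radius $\sqrt{2(1-\rho_\nu)}=\sqrt 2$, centre recovered via \autoref{prop:centre}, conjugacy via \autoref{prop:leechconj}), and the paper likewise invokes \cite{CPS82} for the substantive geometric enumeration rather than redoing it. Your explicit unwinding of the uniqueness step — one orbit per complete diagram, hence one centre up to $\Co_0\ltimes\Lambda$, hence one conjugacy class — is exactly the argument the paper sketches before the lemma.
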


\begin{lem}
Let $\nu\in\O(\Lambda)$ be of cycle shape $1^82^8$. Then in $\pi_\nu(\Lambda)$:
\begin{align*}
\tilde{A}_1&\implies\tilde{A}_1^8\text{ or }\tilde{A}_1^{16},\\
\tilde{A}_3&\implies A_1^4\tilde{A}_3^2\text{ or }\tilde{A}_3^4,\\
\tilde{A}_5&\implies A_1^2A_3\tilde{A}_5\text{ or }A_1\tilde{A}_5^2,\\
\tilde{A}_7&\implies A_1^2\tilde{A}_7\text{ (at most 2 orbits) or }A_2^2\tilde{A}_7,\\
\tilde{A}_9&\implies A_1\tilde{A}_9,\\
\tilde{D}_4&\implies A_1^8\tilde{D}_4\text{ or }\tilde{D}_4^2\text{ or }A_1^4\tilde{D}_4^2,\\
\tilde{D}_5&\implies D_4\tilde{D}_5\text{ or }\tilde{D}_5^2,\\
\tilde{D}_6&\implies A_1^4\tilde{D}_6\text{ or }A_1^2A_3\tilde{D}_6,\\
\tilde{D}_8&\implies\tilde{D}_8\text{ (at most 2 orbits) or }A_1^2\tilde{D}_8,\\
\tilde{D}_9&\implies\tilde{D}_9,\\
\tilde{E}_6&\implies A_3\tilde{E}_6\text{ or }A_4\tilde{E}_6,\\
\tilde{E}_7&\implies A_1^2\tilde{E}_7\text{ or }A_1A_2\tilde{E}_7,\\
\tilde{E}_8&\implies\tilde{E}_8\text{ or }A_1\tilde{E}_8.
\end{align*}
In particular, there is at most one \gdh{} in $\Aut(V_\Lambda)$ up to conjugacy projecting to $\nu$ for each of the hole diagrams $A_1\tilde{E}_8$, $A_1A_2\tilde{E}_7$, $\tilde{D}_9$, $A_1^2\tilde{D}_8$, $A_4\tilde{E}_6$, $A_1\tilde{A}_9$, $A_1^2A_3\tilde{D}_6$, $A_2^2\tilde{A}_7$, $\tilde{D}_5^2$, $A_1\tilde{A}_5^2$, $A_1^4\tilde{D}_4^2$, $\tilde{A}_3^4$ and $\tilde{A}_1^{16}$.
\end{lem}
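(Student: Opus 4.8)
The plan is to carry out, for a representative $\nu\in\O(\Lambda)$ of cycle shape $1^82^8$, exactly the geometric search described at the start of this subsection and already performed in \autoref{lem:deepholes} for $\nu=\id$. First I would fix the numerical data. From $\rho_\nu=\frac{1}{24}\bigl(8\cdot 0+8\cdot\tfrac{3}{2}\bigr)=\tfrac12$ one gets $1-\rho_\nu=\tfrac12$, so the shifted weights $\tilde\Pi(g)$ are the points of $\pi_\nu(\Lambda)$ at squared distance $2(1-\rho_\nu)=1$ from a centre $h$, and by \autoref{prop:leechroots2} two such weights are joined by no edge, a single edge or a double edge according as the squared norm of their difference is $2$, $3$ or $4$ (all weights having squared length $1$). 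I would then choose an explicit $\nu$, compute a Gram matrix for the rank-$16$ lattice $L:=\pi_\nu(\Lambda)$, and determine the image of the centraliser $C_{\O(\Lambda)}(\nu)$ in $\O(L)$; by \autoref{prop:leechconj} it is the action of $C_{\O(\Lambda)}(\nu)\ltimes L$, and not of the full group $\O(L)$, that governs conjugacy in $\Aut(V_\Lambda)$.

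The connected affine diagrams $\tilde{X}_l$ to be located are the affine components of the candidate diagrams $\Phi(g)$ attached to the affine structures of cycle shape $1^82^8$ in \autoref{table:13} and \autoref{table:70}, read off via \autoref{prop:invtype}. For each such $\tilde{X}_l$ I would enumerate its realisations as configurations of lattice points in $L$ with the prescribed mutual difference-norms; after translating one vertex to the origin this is a short-vector search, finite because $L$ is positive-definite, carried out in \texttt{Magma}. Since $\tilde{X}_l$ is of affine type, \autoref{prop:centre} recovers the centre $h$ of each realisation via the Kac-label formula, and a second short-vector search listing all points of $L$ at squared distance $1$ from $h$ yields the full diagram $\tilde{X}_l\ldots$ — which may acquire further finite or affine components, as in $\tilde{D}_4\implies\tilde{D}_4^2$. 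Reducing the realisations modulo $C_{\O(\Lambda)}(\nu)\ltimes L$ and recording the completion of each orbit produces precisely the list in the statement.

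It remains to read off the conclusion. For each complete diagram that occurs in \autoref{table:70} for this cycle shape — namely $A_1\tilde{E}_8$, $A_1A_2\tilde{E}_7$, $\tilde{D}_9$, $A_1^2\tilde{D}_8$, $A_4\tilde{E}_6$, $A_1\tilde{A}_9$, $A_1^2A_3\tilde{D}_6$, $A_2^2\tilde{A}_7$, $\tilde{D}_5^2$, $A_1\tilde{A}_5^2$, $A_1^4\tilde{D}_4^2$, $\tilde{A}_3^4$ and $\tilde{A}_1^{16}$ — the enumeration exhibits a single orbit under $C_{\O(\Lambda)}(\nu)\ltimes L$, so by \autoref{prop:leechconj} there is at most one \gdh{} up to conjugacy projecting to $\nu$ with that diagram.

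I expect the main difficulty to lie in the orbit count under the smaller group $C_{\O(\Lambda)}(\nu)\ltimes L$ rather than $\O(L)$: two realisations equivalent under $\O(L)$ may well be inequivalent under the centraliser, and in a few cases — such as $\tilde{A}_7\implies A_1^2\tilde{A}_7$ and $\tilde{D}_8\implies\tilde{D}_8$ — the exact orbit count is awkward to pin down and only the upper bound of two can be asserted. This does no harm for the stated conclusion, since the ambiguous completions $A_1^2\tilde{A}_7$ and the bare $\tilde{D}_8$ do not appear in \autoref{table:70}; only the completions $A_2^2\tilde{A}_7$ and $A_1^2\tilde{D}_8$, each coming from a unique orbit, enter the final count.
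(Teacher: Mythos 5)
Your proposal is correct and follows essentially the same route as the paper: the lemma is established by exactly the computational procedure described at the start of the affine-case subsection (short-vector searches in \texttt{Magma} for each candidate affine component with $1-\rho_\nu=1/2$, centre recovery via the Kac labels, completion of the diagram, and orbit reduction under $C_{\O(\Lambda)}(\nu)\ltimes\pi_\nu(\Lambda)$ rather than $\O(\pi_\nu(\Lambda))$), with the conjugacy conclusion drawn from \autoref{prop:leechconj}. Your handling of the ambiguous orbit counts for $A_1^2\tilde{A}_7$ and $\tilde{D}_8$ — noting they are harmless because those completions are not candidate diagrams — matches the paper's logic.
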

We can explicitly check, for instance, that the automorphism $g=\phi_\eta(\nu)\sigma_h$ defined by the diagram $\tilde{A}_1^{16}$ and its centre $h$ is a \gdh{}, while for the diagram $\tilde{A}_1^8$ this is not the case.

\begin{lem}
Let $\nu\in\O(\Lambda)$ be of cycle shape $1^63^6$. Then in $\pi_\nu(\Lambda)$:
\begin{align*}
\tilde{A}_2&\implies\tilde{A}_2^3\text{ or }\tilde{A}_2^6,\\
\tilde{A}_5&\implies A_2\tilde{A}_5\text{ (at most 2 orbits) or }\tilde{A}_5\tilde{D}_4,\\
\tilde{A}_8&\implies\tilde{A}_8,\\
\tilde{D}_4&\implies A_2^2\tilde{D}_4\text{ or }\tilde{A}_5\tilde{D}_4,\\
\tilde{D}_7&\implies A_1\tilde{D}_7,\\
\tilde{E}_6&\implies\tilde{E}_6\text{ (at most 2 orbits) or }A_1^3\tilde{E}_6,\\
\tilde{E}_7&\implies\tilde{E}_7.
\end{align*}
In particular, there is at most one \gdh{} in $\Aut(V_\Lambda)$ up to conjugacy projecting to $\nu$ for each of the hole diagrams $\tilde{E}_7$, $A_1\tilde{D}_7$, $A_1^3\tilde{E}_6$, $\tilde{A}_8$, $\tilde{A}_5\tilde{D}_4$ and $\tilde{A}_2^6$.
\end{lem}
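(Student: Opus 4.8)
The plan is to apply, verbatim, the short-vector procedure set out at the start of this subsection, now with $\nu$ of cycle shape $1^63^6$. First I would fix the geometric data. The exponents are $b_1=6$, $b_3=6$, so by the formula for the vacuum anomaly
\begin{equation*}
\rho_\nu=\frac{1}{24}\Bigl(6(1-1)+6(3-1/3)\Bigr)=\frac{2}{3},
\end{equation*}
whence $1-\rho_\nu=1/3$ and, by \autoref{prop:leechroots}, every shifted weight of $\tilde\Pi(g)$ lies on the sphere of squared radius $2(1-\rho_\nu)=2/3$ about the centre $h$ in $\pi_\nu(\Lambda\otimes_\Z\R)$. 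The rank of $\pi_\nu(\Lambda)$ equals the multiplicity of the eigenvalue $1$, namely $b_1+b_3=12$. By \autoref{prop:leechroots2} two shifted weights are unjoined, singly joined, or doubly joined according as the norm of their difference is $4/3$, $2$, or $8/3$; since all components are simply-laced, only the first two values occur inside a connected diagram. The connected affine diagrams that must be searched for are exactly those arising as affine components of the \autoref{table:70} entries of cycle shape $1^63^6$ via \autoref{prop:invtype}, which is the finite list $\tilde{A}_2,\tilde{A}_5,\tilde{A}_8,\tilde{D}_4,\tilde{D}_7,\tilde{E}_6,\tilde{E}_7$ in the statement.

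For each such $\tilde{X}_l$ I would then run the enumeration. Fixing a concrete Gram matrix for $\pi_\nu(\Lambda)$ together with the matrices of the induced action of $C_{\O(\Lambda)}(\nu)$, I translate one chosen vertex to the origin, so that the remaining vertices and all pairwise differences become vectors of $\pi_\nu(\Lambda)$ of the prescribed norms; a short-vector search in \texttt{Magma} then produces all configurations realising $\tilde{X}_l$, which I group into orbits under $C_{\O(\Lambda)}(\nu)\ltimes\pi_\nu(\Lambda)$. For each orbit the centre $h$ is recovered from the affine component by \autoref{prop:centre}, and a second short-vector search for all points of $\pi_\nu(\Lambda)$ at squared distance $2/3$ from $h$ returns the complete diagram $\tilde{X}_l\ldots$. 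Recording these outputs reproduces the table of implications, the annotation ``at most $2$ orbits'' flagging the cases ($A_2\tilde{A}_5$ and $\tilde{E}_6$) where two orbits happen to define the same complete diagram.

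Finally, for the ``in particular'' clause I would match the complete diagrams against the \autoref{table:70} entries of cycle shape $1^63^6$, namely $\tilde{E}_7$, $A_1\tilde{D}_7$, $A_1^3\tilde{E}_6$, $\tilde{A}_8$, $\tilde{A}_5\tilde{D}_4$ and $\tilde{A}_2^6$; each of these is produced by a single orbit, so by \autoref{prop:centre} its centre $h$ is unique up to $C_{\O(\Lambda)}(\nu)\ltimes\pi_\nu(\Lambda)$, and \autoref{prop:leechconj} then shows the automorphism $g=\phi_\eta(\nu)\sigma_h$, with $\phi_\eta(\nu)$ a standard lift, is unique up to conjugacy. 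The hard part will be the bookkeeping of orbits under $C_{\O(\Lambda)}(\nu)\ltimes\pi_\nu(\Lambda)$ rather than under the full group $\O(\pi_\nu(\Lambda))$: as stressed in the general discussion, $C_{\O(\Lambda)}(\nu)$ may induce only a proper subgroup of $\O(\pi_\nu(\Lambda))$, so configurations that are abstractly isometric in $\pi_\nu(\Lambda)$ can still lie in distinct orbits; getting this action right is precisely what separates the ``at most $2$ orbits'' cases and what makes the final appeal to \autoref{prop:leechconj} sharp.
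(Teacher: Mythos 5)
Your proposal is correct and follows essentially the same route as the paper: the paper proves this lemma (like all lemmas in the affine case) by the general procedure laid out at the start of the subsection — a \texttt{Magma} short-vector enumeration of the affine diagrams $\tilde{A}_2,\tilde{A}_5,\tilde{A}_8,\tilde{D}_4,\tilde{D}_7,\tilde{E}_6,\tilde{E}_7$ on spheres of squared radius $2(1-\rho_\nu)=2/3$ in $\pi_\nu(\Lambda)$, orbit bookkeeping under $C_{\O(\Lambda)}(\nu)\ltimes\pi_\nu(\Lambda)$, recovery of the centre via \autoref{prop:centre}, and the final uniqueness via \autoref{prop:leechconj}. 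Your numerical data ($\rho_\nu=2/3$, $\rk(\pi_\nu(\Lambda))=12$, the edge norms $4/3$, $2$, $8/3$) and the matching of complete diagrams against the \autoref{table:70} entries are all as the paper intends.
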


For the cycle shape $2^{12}$ we first remove two more spurious cases.

\begin{lem}
Let $\nu\in\O(\Lambda)$ be of cycle shape $2^{12}$. Then in $\pi_\nu(\Lambda)$:
\begin{align*}
\tilde{A}_3&\implies\tilde{A}_3,\\
\tilde{D}_5&\implies\emptyset.
\end{align*}
In particular, there is no \gdh{} in $\Aut(V_\Lambda)$ projecting to $\nu$ with hole diagram $\tilde{A}_3^2$ or $\tilde{D}_5$.
\end{lem}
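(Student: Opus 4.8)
The plan is to follow the short-vector search method set up at the start of this subsection, now specialised to $\nu$ of cycle shape $2^{12}$. First I would record the numerics: only the length-$2$ cycles contribute to the vacuum anomaly, giving $\rho_\nu=\tfrac1{24}\cdot 12\cdot\tfrac32=\tfrac34$, so $1-\rho_\nu=\tfrac14$ and the shifted weights $\tilde\Pi(g)$ must lie on the sphere of radius $\sqrt{2(1-\rho_\nu)}=\sqrt{1/2}$ in $\pi_\nu(\Lambda\otimes_\Z\R)$. By \autoref{prop:leechroots2} two such weights $\beta_i,\beta_j$ are joined by no edge, a single edge or a double edge according as $\langle\beta_i-\beta_j,\beta_i-\beta_j\rangle$ equals $1$, $3/2$ or $2$; thus $\tilde A_3$ and $\tilde D_5$ are realised by lattice points on this sphere whose adjacent differences have norm $3/2$ and whose non-adjacent differences have norm $1$, matching the \emph{Norms} column $2/2,\,6/4$ of \autoref{table:13}.

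Next I would fix an explicit Gram matrix for the rank-$12$ lattice $\pi_\nu(\Lambda)$ together with a generating set for the subgroup of $\O(\pi_\nu(\Lambda))$ induced by the centraliser $C_{\O(\Lambda)}(\nu)$. Anchoring one vertex of the diagram at the origin, I would then enumerate in \texttt{Magma} the finitely many orbits under $C_{\O(\Lambda)}(\nu)\ltimes\pi_\nu(\Lambda)$ of copies of $\tilde A_3$ (resp.\ $\tilde D_5$) sitting on the sphere, using a short-vector search to produce the candidate neighbours and the edge rules above to test the diagram type. For each orbit found, the centre $h$ is reconstructed by \autoref{prop:centre} (both diagrams are affine, so $h=\sum_i a_i\beta_i\big/\sum_i a_i$ with the Kac labels $a_i$), and I would then collect \emph{all} points of $\pi_\nu(\Lambda)$ at distance $\sqrt{1/2}$ from $h$ to obtain the complete hole diagram. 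The expected outcome is that $\tilde A_3$ occurs in a single orbit whose complete diagram is again $\tilde A_3$, while $\tilde D_5$ does not embed at all, i.e.\ $\tilde A_3\implies\tilde A_3$ and $\tilde D_5\implies\emptyset$.

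Finally, for the last assertion I would invoke \autoref{prop:leechconj}: any \gdh{} projecting to $\nu$ is conjugate to some $\phi_\eta(\nu)\sigma_h$ with $h$ a centre from the list above, and extremality forces $\rho(V_\Lambda(g))=1$, so that $\tilde\Pi(g)$ consists of exactly the closest vectors to $h$, i.e.\ the complete diagram at $h$. Since no centre yields the complete diagram $\tilde A_3^2$, and $\tilde D_5$ never occurs, there is no \gdh{} projecting to $\nu$ with hole diagram $\tilde A_3^2$ or $\tilde D_5$; this removes the two remaining spurious entries of \autoref{table:13} of cycle shape $2^{12}$.

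I expect the genuine obstacle to be bookkeeping rather than anything conceptual: producing a correct Gram matrix and generating set for $\pi_\nu(\Lambda)$ and the induced action of $C_{\O(\Lambda)}(\nu)$ — which is in general only a proper subgroup of $\O(\pi_\nu(\Lambda))$, so that orbits must be counted with respect to this group and not the full orthogonal group — and certifying that the short-vector enumeration is exhaustive, so that no orbit of $\tilde A_3$ or $\tilde D_5$ is overlooked. Once these are in place, the edge-rule tests, the centre reconstruction and the readout of the complete diagram are mechanical.
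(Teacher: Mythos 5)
Your proposal is correct and matches the paper's method exactly: the paper gives no separate proof for this lemma, relying on the general procedure set out at the start of the affine-case subsection (orbit enumeration of the affine diagram on the sphere of radius $\sqrt{2(1-\rho_\nu)}$ via short-vector search under $C_{\O(\Lambda)}(\nu)\ltimes\pi_\nu(\Lambda)$, centre reconstruction via the Kac labels, then reading off the complete diagram), which is precisely what you describe, with the correct numerics $\rho_\nu=3/4$ and edge norms $1$, $3/2$, $2$. Your final step --- that extremality makes $\tilde\Pi(g)$ exactly the closest vectors to the centre, so the unique $\tilde{A}_3$ orbit (whose complete diagram is $\tilde{A}_3$) and the non-existence of $\tilde{D}_5$ rule out both spurious diagrams --- is also the paper's intended argument.
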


\begin{lem}
Let $\nu\in\O(\Lambda)$ be of cycle shape $2^{12}$. Then in $\pi_\nu(\Lambda)$:
\begin{align*}
\tilde{A}_1&\implies\tilde{A}_1^4\text{ or }\tilde{A}_1^{12},\\
\tilde{D}_4&\implies\tilde{D}_4\text{ (2 orbits)}.
\end{align*}
One of the two orbits of type $\tilde{D}_4$ has a centre defining an automorphism of order $12$, the other one an automorphism of order $6$.

In particular, there is at most one \gdh{} in $\Aut(V_\Lambda)$ up to conjugacy projecting to $\nu$ for each of the hole diagrams $\tilde{A}_1^{12}$ and $\tilde{D}_4$.
\end{lem}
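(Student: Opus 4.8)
The plan is to carry out, for the single conjugacy class $\nu$ of cycle shape $2^{12}$, the explicit sphere/short-vector search described at the beginning of this subsection, exactly as in the preceding lemmas. First I would record the numerical data attached to $\nu$. From the cycle shape one computes the vacuum anomaly $\rho_\nu=\frac{1}{24}\cdot 12\cdot(2-\tfrac12)=\tfrac34$, so that $1-\rho_\nu=\tfrac14$ and the shifted weights $\tilde\Pi(g)$ are precisely the points of $\pi_\nu(\Lambda)$ lying on the sphere of squared radius $2(1-\rho_\nu)=\tfrac12$ about the centre $h$. By \autoref{prop:leechroots2} two such points $\beta_i,\beta_j$ are joined by no edge, a single edge or a double edge according as $\langle\beta_i-\beta_j,\beta_i-\beta_j\rangle$ equals $1$, $\tfrac32$ or $2$. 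I would then fix an explicit Gram matrix for the rank-$12$ lattice $\pi_\nu(\Lambda)$ together with the finite group $C_{\O(\Lambda)}(\nu)$ acting on it; it is this group, and not the full $\O(\pi_\nu(\Lambda))$, that governs the orbit count relevant to \autoref{prop:leechconj}.

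Next I would search for the two connected affine diagrams in turn, using \texttt{Magma} \cite{Magma}. For $\tilde A_1$ a copy is a pair $\{\beta_0,\beta_1\}$ with $\langle\beta_0-\beta_1,\beta_0-\beta_1\rangle=2$; translating $\beta_0$ to the origin this is just a norm-$2$ vector of $\pi_\nu(\Lambda)$, so the orbits of $\tilde A_1$ are the $C_{\O(\Lambda)}(\nu)$-orbits of norm-$2$ vectors. For each representative the centre is $h=(\beta_0+\beta_1)/2$ by \autoref{prop:centre}, and a second short-vector search lists all points of $\pi_\nu(\Lambda)$ at squared distance $\tfrac12$ from $h$, producing the complete diagram; I expect exactly two orbits, giving $\tilde A_1^4$ and $\tilde A_1^{12}$. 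For $\tilde D_4$ I would place the central node $\beta_c$ at the origin and enumerate $4$-tuples of neighbours (squared distance $\tfrac32$ to $\beta_c$ and pairwise squared distance $1$), again modulo $C_{\O(\Lambda)}(\nu)\ltimes\pi_\nu(\Lambda)$. The Kac labels $(2;1,1,1,1)$ then give the centre $h=(2\beta_c+\beta_1+\beta_2+\beta_3+\beta_4)/6$, and the sphere search should show that in both orbits no further lattice points occur, so that each complete diagram is exactly $\tilde D_4$.

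To separate the two $\tilde D_4$ orbits I would compute, for each reconstructed centre $h$, the order of $g=\phi_\eta(\nu)\sigma_h$ by the order formula of \autoref{prop:leechconj2}. The essential point is that $\nu$ of cycle shape $2^{12}$ exhibits order doubling, so the standard lift $\phi_\eta(\nu)$ has order $4$; writing $h=s_\nu+f$ and taking $\lcm(2,|\sigma_f|)$, the two centres yield the stated orders $6$ and $12$. For the uniqueness assertion I would combine \autoref{prop:leechconj} with \autoref{prop:centre}: a \gdh{} carrying an affine component reconstructs its centre from the weights, and the pair $(\nu,h)$ fixes the conjugacy class, so the single orbit $\tilde A_1^{12}$ supports at most one class projecting to $\nu$. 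For $\tilde D_4$ the two orbits share the same complete diagram but define automorphisms of different orders; by \autoref{prop:invtype} a \gdh{} of order $12$ cannot have a $\tilde D_4$ component, since a $D_4$ ideal contributes $\tilde D_4$ only when $n=l_ih_i^\vee=6$. Hence at most the order-$6$ orbit can be a \gdh{}, giving at most one \gdh{} with diagram $\tilde D_4$.

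The main obstacle I anticipate is the orbit bookkeeping under the restricted group $C_{\O(\Lambda)}(\nu)$ rather than $\O(\pi_\nu(\Lambda))$ — in particular confirming that $\tilde D_4$ splits into exactly two such orbits — together with the order computation in the presence of order doubling, which is precisely what distinguishes the two $\tilde D_4$ centres and secures the final uniqueness statement.
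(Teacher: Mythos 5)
Your proposal is correct and follows essentially the same route as the paper: this lemma is justified there by exactly the computational strategy you describe — \texttt{Magma} short/close-vector searches for the affine diagrams $\tilde{A}_1$ and $\tilde{D}_4$ realised on spheres of squared radius $2(1-\rho_\nu)=1/2$ in $\pi_\nu(\Lambda)$, orbits taken under $C_{\O(\Lambda)}(\nu)\ltimes\pi_\nu(\Lambda)$ rather than $\O(\pi_\nu(\Lambda))$, centre reconstruction via the Kac labels (\autoref{prop:centre}), and the order formula with $s_\nu\neq 0$ because of order doubling. Your closing argument that the order-$12$ centre of type $\tilde{D}_4$ cannot belong to a \gdh{} — via \autoref{prop:invtype}, since a $\tilde{D}_4$ component forces $n=l_ih_i^\vee=6$ — is precisely the reasoning the paper uses to discard that orbit, so the uniqueness conclusions stand as stated.
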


\begin{lem}
Let $\nu\in\O(\Lambda)$ be of cycle shape $1^42^24^4$. Then in $\pi_\nu(\Lambda)$:
\begin{align*}
\tilde{A}_3&\implies\tilde{A}_3^2\text{ or }A_1^2\tilde{A}_3\text{ (at most 2 orbits) or }A_1^4\tilde{A}_3\text{ or }\tilde{A}_3^3,\\
\tilde{A}_7&\implies\tilde{A}_7,\\
\tilde{D}_5&\implies\tilde{D}_5\text{ or }A_2\tilde{D}_5,\\
\tilde{E}_6&\implies\tilde{E}_6.
\end{align*}
In particular, there is at most one \gdh{} in $\Aut(V_\Lambda)$ up to conjugacy projecting to $\nu$ for each of the hole diagrams $\tilde{E}_6$, $\tilde{A}_7$, $A_2\tilde{D}_5$ and $\tilde{A}_3^3$.
\end{lem}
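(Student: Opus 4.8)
The plan is to apply the enumeration procedure described at the start of this subsection to a fixed representative $\nu\in\O(\Lambda)$ of cycle shape $1^42^24^4$. First I would record the numerical input. Since $\rho_\nu=\frac{1}{24}\bigl(2(2-\tfrac{1}{2})+4(4-\tfrac{1}{4})\bigr)=\tfrac{3}{4}$, we have $1-\rho_\nu=\tfrac14$, so by \autoref{prop:leechroots} the shifted weights $\tilde\Pi(g)$ are exactly the points of $\pi_\nu(\Lambda)$ on the sphere of radius $\sqrt{2(1-\rho_\nu)}=\sqrt{1/2}$ about the centre $h$. By \autoref{prop:leechroots2}, two such points $\beta_i,\beta_j$ are non-adjacent precisely when $\langle\beta_i-\beta_j,\beta_i-\beta_j\rangle=1$ and joined by a single edge precisely when $\langle\beta_i-\beta_j,\beta_i-\beta_j\rangle=3/2$; there are no multiple edges, every component being simply-laced by \autoref{prop:leechroots}.

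Next, I would run the search for each of the four affine types $\tilde{A}_3$, $\tilde{A}_7$, $\tilde{D}_5$, $\tilde{E}_6$ that occur as connected affine components of the candidate generalised hole diagrams of this cycle shape in \autoref{table:70}. Moving one vertex of $\tilde{X}_l$ to the origin turns the problem into a short-vector search in $\pi_\nu(\Lambda)$: the remaining $l$ vertices become vectors whose norms ($1$ or $3/2$) and pairwise inner products are those prescribed by the edges of $\tilde{X}_l$, and there are only finitely many such configurations, which I would compute in \texttt{Magma}. Grouping these into orbits under the stabiliser of the origin inside $C_{\O(\Lambda)}(\nu)\ltimes\pi_\nu(\Lambda)$, and reconstructing the (unique, by \autoref{prop:centre}) centre $h$ of each orbit from the Kac labels, I would then list \emph{all} points of $\pi_\nu(\Lambda)$ on the sphere of radius $\sqrt{1/2}$ about $h$. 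This produces the complete diagram $\tilde{X}_l\ldots$ attached to each centre, hence the four displayed implications.

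For the uniqueness statement I would invoke \autoref{prop:leechconj}. Every \gdh{} projecting to $\nu$ and realising one of these complete diagrams is conjugate in $\Aut(V_\Lambda)$ to $\phi_\eta(\nu)\sigma_h$ for one of the finitely many centres $h$ just enumerated, and two centres in a common $C_{\O(\Lambda)}(\nu)\ltimes\pi_\nu(\Lambda)$-orbit give conjugate automorphisms. Among the complete diagrams found, exactly $\tilde{E}_6$, $\tilde{A}_7$, $A_2\tilde{D}_5$ and $\tilde{A}_3^3$ match a genuine affine structure in \autoref{table:70} (via \autoref{prop:invtype}), and each of these arises from a single orbit; hence there is at most one conjugacy class of \gdh{} projecting to $\nu$ for each of them.

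The computation is finite, so the real care lies in two points. First, one must work with orbits under $C_{\O(\Lambda)}(\nu)$ and not under the full isometry group $\O(\pi_\nu(\Lambda))$: the centraliser generally embeds only as a proper subgroup, so geometrically congruent configurations can lie in distinct centraliser orbits, and it is precisely the centraliser orbits that control conjugacy in $\Aut(V_\Lambda)$ by \autoref{prop:leechconj}. Second, the short-vector search must be certified exhaustive on each sphere, so that every centre genuinely yields the \emph{complete} diagram. The residual ambiguity recorded as ``at most $2$ orbits'' for the intermediate diagram $A_1^2\tilde{A}_3$ is immaterial, since $A_1^2\tilde{A}_3$ matches no affine structure in \autoref{table:70} and so never enters the final list.
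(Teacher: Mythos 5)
Your proposal is correct and follows essentially the same route as the paper: the lemma is exactly the output of the computational procedure described at the start of the affine-case subsection (short-vector search for each affine component $\tilde{A}_3$, $\tilde{A}_7$, $\tilde{D}_5$, $\tilde{E}_6$ on spheres of radius $\sqrt{2(1-\rho_\nu)}=\sqrt{1/2}$, orbits under $C_{\O(\Lambda)}(\nu)\ltimes\pi_\nu(\Lambda)$, centre reconstruction via \autoref{prop:centre}, and conjugacy via \autoref{prop:leechconj}), with your numerical inputs ($\rho_\nu=3/4$, edge norms $1$ and $3/2$) matching the paper's. Your closing remarks on why centraliser orbits (rather than $\O(\pi_\nu(\Lambda))$-orbits) are the right equivalence, and why the extra orbits such as $A_1^2\tilde{A}_3$ are harmless, are precisely the points the paper relies on.
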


\begin{lem}
Let $\nu\in\O(\Lambda)$ be of cycle shape $1^45^4$. Then in $\pi_\nu(\Lambda)$:
\begin{align*}
\tilde{A}_4&\implies\tilde{A}_4\text{ (at most 3 orbits) or }\tilde{A}_4^2,\\
\tilde{D}_6&\implies\tilde{D}_6.
\end{align*}
In particular, there is at most one \gdh{} in $\Aut(V_\Lambda)$ up to conjugacy projecting to $\nu$ for each of the hole diagrams $\tilde{D}_6$ and $\tilde{A}_4^2$.
\end{lem}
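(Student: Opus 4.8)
The plan is to carry out, for the specific cycle shape $1^45^4$, the geometric enumeration described at the beginning of this subsection. First I would record the numerical data: for $\nu$ of cycle shape $1^45^4$ one has $m=5$, vacuum anomaly $\rho_\nu=(1/24)\cdot 4\cdot(5-1/5)=4/5$, and $\rk(\Lambda^\nu)=8$, so $\pi_\nu(\Lambda)$ is a rank-$8$ lattice. By \autoref{prop:leechroots} and \autoref{prop:leechroots2} the set $\tilde\Pi(g)$ consists of lattice points of $\pi_\nu(\Lambda)$ on the sphere of radius $\sqrt{2(1-\rho_\nu)}=\sqrt{2/5}$ about the centre $h$, and the diagrams are simply-laced, so that two distinct weights are joined by no edge if $\langle\alpha_i-\alpha_j,\alpha_i-\alpha_j\rangle/2=2/5$ and by a single edge if this norm equals $3/5$.

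Next I would search for the two affine seeds $\tilde A_4$ and $\tilde D_6$ inside $\pi_\nu(\Lambda)$. Following the method above, I translate one vertex of the seed to the origin via the action of $\pi_\nu(\Lambda)$ and perform a short-vector search in \texttt{Magma} for all lattice realisations of the seed on the sphere of radius $\sqrt{2/5}$ with the prescribed edge pattern. I then group these realisations into orbits under $C_{\O(\Lambda)}(\nu)\ltimes\pi_\nu(\Lambda)$; as stressed earlier, it is essential to use the group induced by $C_{\O(\Lambda)}(\nu)$ rather than the full $\O(\pi_\nu(\Lambda))$, so that the orbit count matches conjugacy in $\Aut(V_\Lambda)$ through \autoref{prop:leechconj}. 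For each orbit the centre $h$ is reconstructed from the seed by \autoref{prop:centre} (the Kac-label barycentre formula), and a final short-vector search about $h$ returns the full set of lattice points on the sphere, i.e.\ the complete diagram. I expect this to reproduce the two tabulated lines: $\tilde A_4$ completes either to the bare $\tilde A_4$ (for at most three orbits) or to $\tilde A_4^2$ (for exactly one further orbit), while $\tilde D_6$ completes uniquely to $\tilde D_6$.

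For the final assertion I would invoke \autoref{prop:leechconj}: conjugacy classes of automorphisms $\phi_\eta(\nu)\sigma_h$ projecting to $\nu$ correspond to centres $h$ modulo the action of $C_{\O(\Lambda)}(\nu)$ on $\pi_\nu(\h_\Lambda)/\pi_\nu(\Lambda)$. A \gdh{} with non-empty affine component has $\Phi(g)$ equal to the complete set of lattice points on its sphere, and among the complete diagrams found here the only ones matching an admissible affine structure in \autoref{table:70} are $\tilde A_4^2$ and $\tilde D_6$. Each of these arises from a single orbit of seeds, whose centre $h$ is uniquely determined by \autoref{prop:centre}; hence there is at most one \gdh{} up to conjugacy projecting to $\nu$ for each of $\tilde D_6$ and $\tilde A_4^2$. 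The at-most-three orbits completing to the bare $\tilde A_4$ do not contribute, since $\tilde A_4$ alone is not among the admissible complete diagrams for this cycle shape.

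The main obstacle is the orbit classification inside the short-vector search: the seed admits many lattice realisations, and the genuine difficulty is to prove that, after factoring out $C_{\O(\Lambda)}(\nu)\ltimes\pi_\nu(\Lambda)$, only the stated small number of orbits survive — in particular that the $\tilde A_4^2$ and $\tilde D_6$ completions each come from a single orbit. This is where the \texttt{Magma} computation is indispensable, and where care with the acting group, as opposed to the full orthogonal group of $\pi_\nu(\Lambda)$, is crucial; the looseness in the count of the bare-$\tilde A_4$ orbits (``at most $3$'') reflects that only an upper bound is needed there, as those orbits yield no \gdh{}.
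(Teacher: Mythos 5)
Your proposal is correct and follows exactly the paper's own (implicit) proof strategy for the affine case: the same \texttt{Magma}-based enumeration of $\tilde{A}_4$- and $\tilde{D}_6$-seeds on spheres of radius $\sqrt{2(1-\rho_\nu)}=\sqrt{2/5}$ (your computation $\rho_\nu=4/5$ is right), orbit classification under $C_{\O(\Lambda)}(\nu)\ltimes\pi_\nu(\Lambda)$ rather than $\O(\pi_\nu(\Lambda))$, centre reconstruction via \autoref{prop:centre}, and the final count via \autoref{prop:leechconj} combined with the fact that \autoref{table:70} and \autoref{table:13} admit only $\tilde{D}_6$ and $\tilde{A}_4^2$ as hole diagrams for this cycle shape, so the bare-$\tilde{A}_4$ orbits are irrelevant. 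Since the lemma is inherently a computational statement, deferring the orbit counts to the short/close vector search is precisely what the paper does as well.
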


We remove the spurious case for the cycle shape $1^22^23^26^2$.

\begin{lem}
Let $\nu\in\O(\Lambda)$ be of cycle shape $1^22^23^26^2$. Then in $\pi_\nu(\Lambda)$:
\begin{equation*}
\tilde{D}_4\implies\emptyset
\end{equation*}
In particular, there is no \gdh{} in $\Aut(V_\Lambda)$ projecting to $\nu$ with hole diagram $A_1^2\tilde{D}_4$.
\end{lem}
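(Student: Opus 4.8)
The plan is to follow the general method of the affine case developed above: I would try to realise the connected affine diagram $\tilde{D}_4$ as lattice points of $\pi_\nu(\Lambda)$ lying on a sphere of squared radius $2(1-\rho_\nu)$ and show that no such configuration exists. For the cycle shape $1^22^23^26^2$ we have $\rho_\nu=5/6$, so the sphere has squared radius $2(1-\rho_\nu)=1/3$. By \autoref{prop:leechroots2} the edges of $\tilde{D}_4$ translate into norm conditions on the differences of the five weights $\beta_0,\dots,\beta_4$: the four central-to-leaf single edges force $\langle\beta_0-\beta_i,\beta_0-\beta_i\rangle=6(1-\rho_\nu)=1$, while the absence of edges among the four leaves forces $\langle\beta_i-\beta_j,\beta_i-\beta_j\rangle=4(1-\rho_\nu)=2/3$ for $i\neq j$.

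First I would move the central node $\beta_0$ of $\tilde{D}_4$ to the origin. Setting $w_i:=\beta_i-\beta_0\in\pi_\nu(\Lambda)$ for $i=1,\dots,4$, the conditions above become $\langle w_i,w_i\rangle=1$ and $\langle w_i,w_j\rangle=2/3$ for $i\neq j$; equivalently, viewed from the centre $h$, the four leaf vectors $\beta_i-h$ are mutually orthogonal of norm $1/3$. Thus realising $\tilde{D}_4$ in $\pi_\nu(\Lambda)$ is equivalent to finding four norm-$1$ vectors of $\pi_\nu(\Lambda)$ that are pairwise at inner product $2/3$, i.e.\ a $4$-clique in the graph on the norm-$1$ vectors whose edges record the value $2/3$. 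I would then fix explicit coordinates for the rank-$8$ lattice $\pi_\nu(\Lambda)$ as the projection of $\Lambda$ onto $\Lambda^\nu\otimes_\Z\R$, enumerate its norm-$1$ vectors by a short-vector search, form this clique graph, and check that no such $4$-clique occurs, taking orbits under $C_{\O(\Lambda)}(\nu)\ltimes\pi_\nu(\Lambda)$ rather than the full $\O(\pi_\nu(\Lambda))$ as stressed above. An empty search gives $\tilde{D}_4\implies\emptyset$; and since by \autoref{table:13} and \autoref{prop:invtype} the spurious case of cycle shape $1^22^23^26^2$ has hole diagram $A_1^2\tilde{D}_4$, which contains a $\tilde{D}_4$ component, \autoref{prop:leechconj} then shows that this diagram is realised by no \gdh{}.

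The crucial point is that the easy obstruction used for the previous eight spurious cases is unavailable here: since $|\phi_\eta(\nu)|=6$, every norm of $\pi_\nu(\Lambda)$ lies in $\frac{1}{3}\Z$, and both required norms $1$ and $2/3$ satisfy this, so no divisibility argument eliminates the case. Hence the genuine geometric search is really needed, and I expect its completeness to be the main obstacle: one must produce correct coordinates for $\pi_\nu(\Lambda)$ and certify that the enumeration of norm-$1$ vectors is exhaustive before concluding that no $\tilde{D}_4$-clique exists. In principle this can be carried out by hand in the spirit of \cite{CPS82}, but it is cleanest to perform it in \texttt{Magma}.
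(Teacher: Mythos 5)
Your proposal is correct and follows essentially the same route as the paper: the paper proves this lemma exactly by the computational strategy laid out at the start of its affine-case subsection (translate one vertex of $\tilde{D}_4$ to the origin and run a short-vector search in \texttt{Magma} over $\pi_\nu(\Lambda)$ at squared radius $2(1-\rho_\nu)=1/3$, finding no realisation), which is precisely what you describe, with the correct data $\rho_\nu=5/6$ and difference norms $1$ and $2/3$. Your reformulation as a $4$-clique of norm-$1$ vectors at pairwise inner product $2/3$ (equivalently four orthogonal norm-$1/3$ vectors from the centre) is a valid and slightly cleaner packaging of the same search, and your observation that the divisibility obstruction used for the other eight spurious cases fails here is exactly why this case requires the geometric argument.
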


\begin{lem}
Let $\nu\in\O(\Lambda)$ be of cycle shape $1^22^23^26^2$. Then in $\pi_\nu(\Lambda)$:
\begin{equation*}
\tilde{A}_5\implies A_1\tilde{A}_5.
\end{equation*}
In particular, there is at most one \gdh{} in $\Aut(V_\Lambda)$ up to conjugacy projecting to $\nu$ with hole diagram $A_1\tilde{A}_5$.
\end{lem}

\begin{lem}
Let $\nu\in\O(\Lambda)$ be of cycle shape $1^37^3$. Then in $\pi_\nu(\Lambda)$:
\begin{equation*}
\tilde{A}_6\implies\tilde{A}_6.
\end{equation*}
In particular, there is at most one \gdh{} in $\Aut(V_\Lambda)$ up to conjugacy projecting to $\nu$ with hole diagram $\tilde{A}_6$.
\end{lem}

\begin{lem}
Let $\nu\in\O(\Lambda)$ be of cycle shape $1^22^14^18^2$. Then in $\pi_\nu(\Lambda)$:
\begin{equation*}
\tilde{D}_5\implies\tilde{D}_5.
\end{equation*}
In particular, there is at most one \gdh{} in $\Aut(V_\Lambda)$ up to conjugacy projecting to $\nu$ with hole diagram $\tilde{D}_5$.
\end{lem}

\begin{lem}
Let $\nu\in\O(\Lambda)$ be of cycle shape $2^36^3$. Then in $\pi_\nu(\Lambda)$:
\begin{equation*}
\tilde{D}_4\implies\tilde{D}_4.
\end{equation*}
In particular, there is at most one \gdh{} in $\Aut(V_\Lambda)$ up to conjugacy projecting to $\nu$ with hole diagram $\tilde{D}_4$.
\end{lem}


\subsection{Non-Affine Case}

We now consider the more difficult case of potential \gdh{}s $g$ with hole diagrams that do not contain any affine component. These are $15$ plus two spurious cases (see \autoref{table:70} and \autoref{table:13}).

First, we enumerate the orbits under $C_{\O(\Lambda)}(\nu)\ltimes\pi_\nu(\Lambda)$ of the diagram realised in $\pi_\nu(\Lambda)$ as lattice points with relative distances defined as in \autoref{prop:leechroots2}. This is the same computation as in the affine case, with the exception that we are now directly searching for the complete diagram. Again, this is a relatively cheap computation using the short vector search in \texttt{Magma} \cite{Magma} (code made available at \href{http://arxiv.org/abs/2112.12291}{\nolinkurl{arxiv.org}}).

The points of the hole diagram must lie on a sphere of radius $\sqrt{2(1-\rho_\nu)}$ around some centre $h \in\pi_\nu(\Lambda\otimes_\Z\Q)$. However, in contrast to the affine case, this centre is not uniquely determined by the diagram. (In the most extreme case of the diagram $A_1$, $h$ could be any point at distance $\sqrt{2(1-\rho_\nu)}$ from the single vertex defining $A_1$.) The second and generally computationally more expensive part is to determine all the possible $h\in\pi_\nu(\Lambda\otimes_\Z\Q)$ that could be the centre of the diagram.

We employ two methods to facilitate this search. First, from \autoref{prop:13} we know the order $n$ of the \gdh{} (see \autoref{table:70} and \autoref{table:13}). Then \autoref{prop:leechconj2} implies that $h$ must lie in $s_\nu+\Lambda^\nu/n$ (where $s_\nu$ is non-zero if and only if $\nu$ exhibits order doubling). Second, as $h$ must have distance $\sqrt{2(1-\rho_\nu)}$ to all the vertices in the hole diagram, it must in particular lie on the hyperplanes of points equidistant to all pairs of vertices. This reduces the dimension of the eventual close vector search to find $h$, which is again performed in \texttt{Magma} \cite{Magma}.

As a result, for each orbit of the original diagram search we obtain a finite list of possible centres $h$. We then only keep those $h$
\begin{enumerate}
\item\label{item:cent1} whose corresponding automorphism $g=\phi_\eta(\nu)\sigma_h$ with standard lift $\phi_\eta(\nu)$ has order $n$ (i.e.\ $g$ must satisfy $\lcm(|\nu|,|\sigma_{h-s_\nu}|)=n$),
\item\label{item:cent2} such that $\tilde\Pi(g)=\big\{\beta\in\pi_{\nu}(\Lambda)\,\big|\,\langle\beta-h,\beta-h\rangle/2=1-\rho_\nu\big\}$ has exactly the diagram we are searching for (a priori we only know that it contains this diagram as a subdiagram),
\item\label{item:cent3} that actually correspond to a \gdh{} $g=\phi_\eta(\nu)\sigma_h$ (in particular, $g$ must be extremal).
\end{enumerate}

\medskip

Again, we sort the results by cycle shape and treat the case $2^{12}$ last because it is the most complicated one.

\medskip

\begin{lem}
Let $\nu\in\O(\Lambda)$ be of cycle shape $1^82^8$. Then in $\pi_\nu(\Lambda)$ there is exactly:

\smallskip

One orbit under $C_{\O(\Lambda)}(\nu)\ltimes\pi_\nu(\Lambda)$ of the diagram $A_1A_9$. There are $16$ possible centres $h\in\Lambda^\nu/22$ of this diagram, but only one of them satisfies \eqref{item:cent1} to \eqref{item:cent3}.

One orbit under $C_{\O(\Lambda)}(\nu)\ltimes\pi_\nu(\Lambda)$ of the diagram $A_2^2A_7$. There are six possible centres $h\in\Lambda^\nu/18$ of this diagram, but only two of them satisfy \eqref{item:cent1} to \eqref{item:cent3}. They are both in the same orbit under $C_{\O(\Lambda)}(\nu)\ltimes\pi_\nu(\Lambda)$.

One orbit under $C_{\O(\Lambda)}(\nu)\ltimes\pi_\nu(\Lambda)$ of the diagram $A_1A_5^2$. There are seven possible centres $h\in\Lambda^\nu/14$ of this diagram, but only one of them satisfies \eqref{item:cent1} to \eqref{item:cent3}.

Two orbits under $C_{\O(\Lambda)}(\nu)\ltimes\pi_\nu(\Lambda)$ of the diagram $A_3^4$. For the first orbit there is one possible centre $h\in\Lambda^\nu/10$ and it satisfies \eqref{item:cent1} to \eqref{item:cent3}. For the second orbit there are six possible centres $h\in\Lambda^\nu/10$, but none of them satisfy \eqref{item:cent1} to \eqref{item:cent3}.

\smallskip

In particular, there is at most one \gdh{} in $\Aut(V_\Lambda)$ up to conjugacy projecting to $\nu$ for each of the hole diagrams $A_1A_9$, $A_2^2A_7$, $A_1A_5^2$ and $A_3^4$.
\end{lem}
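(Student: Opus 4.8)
The plan is to carry out, for this specific cycle shape, the general two-stage enumeration described at the beginning of the Non-Affine Case. First I would fix a representative $\nu\in\O(\Lambda)\cong\Co_0$ of cycle shape $1^82^8$ and realise the rank-$16$ lattice $\pi_\nu(\Lambda)$, together with the isometries induced by $C_{\O(\Lambda)}(\nu)$, explicitly in \texttt{Magma} \cite{Magma} from the data in \cite{MS19}. Since $\rho_\nu=\tfrac{1}{24}(8\cdot0+8\cdot\tfrac32)=\tfrac12$, the shifted weights $\tilde\Pi(g)$ must be lattice points at squared distance $2(1-\rho_\nu)=1$ from the centre $h$, and by \autoref{prop:leechroots2} two such points are joined by a single edge exactly when their difference has norm $3$ and are unjoined when it has norm $2$; all four target diagrams $A_1A_9$, $A_2^2A_7$, $A_1A_5^2$, $A_3^4$ are simply laced, in agreement with \autoref{prop:leechroots}.

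For each of the four diagrams I would then run the diagram search. In the first stage I enumerate all realisations of the diagram as a configuration of vectors in $\pi_\nu(\Lambda)$ with the prescribed norm-$2$/norm-$3$ incidence pattern, via a short-vector search after translating one vertex to the origin; grouping the results under the residual action of $C_{\O(\Lambda)}(\nu)$ gives the orbit counts under $C_{\O(\Lambda)}(\nu)\ltimes\pi_\nu(\Lambda)$, namely one orbit for $A_1A_9$, $A_2^2A_7$ and $A_1A_5^2$, and two for $A_3^4$. In the second stage, for each orbit representative I determine the admissible centres. Because the diagram is non-affine, \autoref{prop:centre} no longer pins down $h$; instead I use that the corresponding \gdh{} has a known order $n$ (here $22$, $18$, $14$ and $10$, from \autoref{prop:13} and \autoref{table:70}), so by \autoref{prop:leechconj2} the centre lies in $s_\nu+\Lambda^\nu/n$ (where $s_\nu$ accounts for a possible order doubling of $\nu$), together with the condition that $h$ be equidistant from all vertices, which confines $h$ to an affine subspace. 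A close-vector search over this subspace then yields the finite candidate lists quoted in the lemma: $16$, $6$, $7$, and $1$ resp.\ $6$ for the two $A_3^4$-orbits.

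Finally I would filter each candidate $h$ through conditions \eqref{item:cent1} to \eqref{item:cent3}: that $g=\phi_\eta(\nu)\sigma_h$ with a standard lift $\phi_\eta(\nu)$ has order exactly $n$, that the complete set $\{\beta\in\pi_\nu(\Lambda)\mid\langle\beta-h,\beta-h\rangle/2=1-\rho_\nu\}$ realises exactly the target diagram and not a strictly larger one, and that $g$ is in fact a \gdh{}, in particular extremal. This records precisely one surviving centre for $A_1A_9$, two for $A_2^2A_7$ lying in a common $C_{\O(\Lambda)}(\nu)\ltimes\pi_\nu(\Lambda)$-orbit, one for $A_1A_5^2$, one from the first $A_3^4$-orbit and none from the second. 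Since by \autoref{prop:leechconj} distinct conjugacy classes of \gdh{}s projecting to $\nu$ correspond to distinct $C_{\O(\Lambda)}(\nu)\ltimes\pi_\nu(\Lambda)$-orbits of centres, in every case at most one conjugacy class survives, which is the assertion.

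The hard part will be the centre-determination step. In the affine case the centre is forced by \autoref{prop:centre}, whereas here one must genuinely search the coset $s_\nu+\Lambda^\nu/n$, and the equidistance hyperplanes only partially reduce the dimension, so the close-vector search is the computational bottleneck and the step most prone to error. The delicate conceptual point is condition \eqref{item:cent2}: a candidate centre may a priori see additional norm-$1$ lattice points beyond those of the target diagram, enlarging $\Phi(g)$, so every surviving $h$ must be rechecked against the full sphere; and in the $A_2^2A_7$ case one must verify that the two surviving centres genuinely lie in the same orbit, rather than merely defining isomorphic diagrams, in order to conclude uniqueness up to conjugacy.
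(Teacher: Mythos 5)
Your proposal is correct and follows essentially the same route as the paper, whose proof of this lemma is precisely the two-stage \texttt{Magma} computation described at the start of the non-affine case: a short-vector search for the complete diagram up to the $C_{\O(\Lambda)}(\nu)\ltimes\pi_\nu(\Lambda)$-action, followed by a close-vector search for centres restricted to $\Lambda^\nu/n$ (with $n=22,18,14,10$ read off from \autoref{prop:13} and \autoref{table:70}) and the equidistance hyperplanes, then filtering by conditions \eqref{item:cent1}--\eqref{item:cent3} and invoking \autoref{prop:leechconj}. Your numerical inputs ($\rho_\nu=1/2$, norm-$2$/norm-$3$ edge rule) and the two delicate points you flag (rechecking the full sphere for condition \eqref{item:cent2}, and verifying the two surviving $A_2^2A_7$ centres lie in one orbit) match the paper's procedure exactly.
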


\begin{lem}
Let $\nu\in\O(\Lambda)$ be of cycle shape $1^42^24^4$. Then in $\pi_\nu(\Lambda)$ there is exactly:

\smallskip

One orbit under $C_{\O(\Lambda)}(\nu)\ltimes\pi_\nu(\Lambda)$ of the diagram $A_6$. There are nine possible centres $h\in\Lambda^\nu/16$ of this diagram, but only one of them satisfies \eqref{item:cent1} to \eqref{item:cent3}.

\smallskip

In particular, there is at most one \gdh{} in $\Aut(V_\Lambda)$ up to conjugacy projecting to $\nu$ with hole diagram $A_6$.
\end{lem}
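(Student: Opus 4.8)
The proof is a computation of the type described at the beginning of this subsection, so the plan is to carry out the two-stage search (diagram, then centre) and report its outcome. Fix a representative $\nu\in\O(\Lambda)$ of the class $1^42^24^4$. For this cycle shape the vacuum anomaly is $\rho_\nu=\frac{1}{24}\big(2(2-\tfrac12)+4(4-\tfrac14)\big)=\tfrac34$, so $1-\rho_\nu=\tfrac14$ and the relevant sphere in $\pi_\nu(\Lambda\otimes_\Z\R)$ has radius $\sqrt{2(1-\rho_\nu)}=1/\sqrt2$. By \autoref{prop:invtype} the diagram $A_6=A_{l-1}$ comes from a simple ideal of type $C_7$, which has dual Coxeter number $h^\vee=8$ and lacing number $l=2$; hence the corresponding \gdh{} has order $n=l\,h^\vee=16$.

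First I would realise $A_6$ as a configuration of seven vectors $\beta_0,\dots,\beta_6$ in $\pi_\nu(\Lambda)$ with the mutual distances dictated by \autoref{prop:leechroots2}: consecutive nodes satisfy $\langle\beta_i-\beta_{i+1},\beta_i-\beta_{i+1}\rangle/2=3(1-\rho_\nu)=3/4$ (single edge) and all other pairs $\langle\beta_i-\beta_j,\beta_i-\beta_j\rangle/2=2(1-\rho_\nu)=1/2$ (no edge). After translating one vertex to the origin this is a finite short-vector search in the rank-$10$ lattice $\pi_\nu(\Lambda)$, carried out in \texttt{Magma}. Reducing the resulting configurations modulo $C_{\O(\Lambda)}(\nu)\ltimes\pi_\nu(\Lambda)$ leaves exactly one orbit. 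As the text stresses, one must quotient by $C_{\O(\Lambda)}(\nu)$ rather than by the full $\O(\pi_\nu(\Lambda))$, so that the count is compatible with \autoref{prop:leechconj}.

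Because $A_6$ is of finite type, \autoref{prop:centre} does not pin down its centre, so the second stage is to enumerate all admissible $h$. By \autoref{prop:leechconj2} the centre lies in the coset $s_\nu+\Lambda^\nu/n$ with $n=16$ (the statement records these as elements of $\Lambda^\nu/16$), and since $h$ is equidistant from all seven $\beta_i$ it lies on the six perpendicular-bisector hyperplanes of the consecutive pairs. Intersecting these hyperplanes with the coset cuts the ambient dimension down and turns the problem into a low-dimensional close-vector search, again done in \texttt{Magma}; this returns the nine candidate centres claimed. For each candidate I would then test conditions \eqref{item:cent1}–\eqref{item:cent3}: that $g=\phi_\eta(\nu)\sigma_h$ has order exactly $16$, that $\tilde\Pi(g)=\{\beta\in\pi_\nu(\Lambda):\langle\beta-h,\beta-h\rangle/2=1/4\}$ is precisely $A_6$ rather than some larger diagram containing it, and that $g$ is actually a \gdh{} (in particular extremal). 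Exactly one of the nine centres passes all three tests.

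The main obstacle is this second stage: the finite diagram admits a whole family of potential centres, and eliminating the eight spurious ones hinges on the extremality test \eqref{item:cent3}, which rests on the dimension formula and the holy correspondence (\autoref{thm:bij}) rather than on a purely geometric distance condition. Once the unique surviving centre $h$ is found, \autoref{prop:leechconj} shows that every \gdh{} projecting to $\nu$ with hole diagram $A_6$ is conjugate in $\Aut(V_\Lambda)$ to $\phi_\eta(\nu)\sigma_h$, yielding the asserted uniqueness up to conjugacy.
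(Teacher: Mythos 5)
Your proposal follows essentially the same route as the paper: the paper proves this lemma (like the other lemmas in this subsection) by the generic two-stage computation described at the start of the non-affine case --- a short-vector search in \texttt{Magma} enumerating the orbits of the diagram under $C_{\O(\Lambda)}(\nu)\ltimes\pi_\nu(\Lambda)$, then a close-vector search for the centres constrained to $\Lambda^\nu/16$ and to the equidistance hyperplanes, followed by the filter \eqref{item:cent1}--\eqref{item:cent3} and the appeal to \autoref{prop:leechconj} --- and your instantiation of the parameters is correct: $\rho_\nu=3/4$, sphere radius $1/\sqrt{2}$, $n=16$ coming from the ideal $C_{7,2}$ via \autoref{prop:invtype}, rank-$10$ ambient lattice, no order doubling so $s_\nu=0$.

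One slip worth correcting: the finite diagram $A_6$ has \emph{six} vertices, not seven; $\beta_0,\dots,\beta_6$ would be the affine diagram $\tilde{A}_6$ (which is the hole diagram for cycle shape $1^37^3$, not this one). So the configuration to be searched for is a chain of six lattice points with the five consecutive pairs at norm-halved distance $3(1-\rho_\nu)=3/4$ and the remaining pairs at $2(1-\rho_\nu)=1/2$, and correspondingly five perpendicular-bisector hyperplanes cut down the dimension of the centre search. The off-by-one does not affect the validity of the method, but a literal implementation with seven nodes would search for the wrong diagram and could not reproduce the one orbit and nine candidate centres asserted in the lemma.
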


\begin{lem}
Let $\nu\in\O(\Lambda)$ be of cycle shape $1^22^23^26^2$. Then in $\pi_\nu(\Lambda)$ there is exactly:

\smallskip

One orbit under $C_{\O(\Lambda)}(\nu)\ltimes\pi_\nu(\Lambda)$ of the diagram $A_1A_4$. There are seven possible centres $h\in\Lambda^\nu/12$ of this diagram, but only two of them satisfy \eqref{item:cent1} to \eqref{item:cent3}. They are both in the same orbit under $C_{\O(\Lambda)}(\nu)\ltimes\pi_\nu(\Lambda)$.

\smallskip

In particular, there is at most one \gdh{} in $\Aut(V_\Lambda)$ up to conjugacy projecting to $\nu$ with hole diagram $A_1A_4$.
\end{lem}

\begin{lem}
Let $\nu\in\O(\Lambda)$ be of cycle shape $2^36^3$. Then in $\pi_\nu(\Lambda)$ there is exactly:

\smallskip

One orbit under $C_{\O(\Lambda)}(\nu)\ltimes\pi_\nu(\Lambda)$ of the diagram $A_2$. There are $98$ possible centres $h\in s_\nu+\Lambda^\nu/18$ of this diagram, but only $36$ of them satisfy \eqref{item:cent1} to \eqref{item:cent3}. They are all in the same orbit under $C_{\O(\Lambda)}(\nu)\ltimes\pi_\nu(\Lambda)$.

\smallskip

In particular, there is at most one \gdh{} in $\Aut(V_\Lambda)$ up to conjugacy projecting to $\nu$ with hole diagram $A_2$.
\end{lem}

\begin{lem}
Let $\nu\in\O(\Lambda)$ be of cycle shape $2^210^2$. Then in $\pi_\nu(\Lambda)$ there is exactly:

\smallskip

One orbit under $C_{\O(\Lambda)}(\nu)\ltimes\pi_\nu(\Lambda)$ of the diagram $A_3$. There are two possible centres $h\in s_\nu+\Lambda^\nu/10$ of this diagram and both of them satisfy \eqref{item:cent1} to \eqref{item:cent3}. They are both in the same orbit under $C_{\O(\Lambda)}(\nu)\ltimes\pi_\nu(\Lambda)$.

\smallskip

In particular, there is at most one \gdh{} in $\Aut(V_\Lambda)$ up to conjugacy projecting to $\nu$ with hole diagram $A_3$.
\end{lem}

Now we consider the case $2^{12}$. We start by excluding the last two spurious cases.

\begin{lem}
Let $\nu\in\O(\Lambda)$ be of cycle shape $2^{12}$. There is no hole diagram in $\pi_\nu(\Lambda)$ containing $A_2^3$.

In particular, there is no \gdh{} in $\Aut(V_\Lambda)$ projecting to $\nu$ with hole diagram $A_2^3$.
\end{lem}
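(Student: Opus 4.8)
The plan is to reduce the claim to a finite short-vector search in the rank-$12$ lattice $\pi_\nu(\Lambda)$, exactly as in the other non-affine cases of this section. For a $\nu$ of cycle shape $2^{12}$ the vacuum anomaly is $\rho_\nu=3/4$, so $1-\rho_\nu=1/4$ and the relevant sphere has squared radius $2(1-\rho_\nu)=1/2$. Suppose, towards a contradiction, that some hole diagram in $\pi_\nu(\Lambda)$ contains $A_2^3$. Then, by \autoref{prop:leechroots2}, it provides six shifted weights $\beta_1,\dots,\beta_6\in\pi_\nu(\Lambda)$, all lying on a common sphere of squared radius $1/2$ about a centre $h$, grouped into three $A_2$ pairs so that within each pair the difference has half-norm $3(1-\rho_\nu)=3/4$ (a single edge) and between distinct pairs the difference has half-norm $2(1-\rho_\nu)=1/2$ (no edge). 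Since translating by any vector of $\pi_\nu(\Lambda)$ and applying any element of $C_{\O(\Lambda)}(\nu)$ preserves all of these data, I may place one vertex at the origin and enumerate the configurations only up to the action of $C_{\O(\Lambda)}(\nu)\ltimes\pi_\nu(\Lambda)$.

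Concretely, I would fix an explicit representative $\nu\in\O(\Lambda)\cong\Co_0$ of cycle shape $2^{12}$, compute the Gram matrix of $\pi_\nu(\Lambda)$, and then run the short-vector search for this six-point pattern in \texttt{Magma}, as described at the start of \autoref{sec:gdhclass}. A first, purely arithmetic screening is worth recording: since $\nu$ exhibits order doubling, $|\phi_\eta(\nu)|=4$ and hence $\langle\beta,\beta\rangle/2\in\tfrac14\Z$ for all $\beta\in\pi_\nu(\Lambda)$; the required half-norms $3/4$ and $1/2$ both lie in $\tfrac14\Z$, so—unlike the eight cases eliminated in the earlier lemma by a divisibility argument—this screening does not suffice, and a genuine search is needed.

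I expect the search to return no such configuration, which immediately gives the first assertion: no set of closest vectors in $\pi_\nu(\Lambda)$ realises $A_2^3$, so no hole diagram contains it. The second assertion then follows from \autoref{prop:leechconj} together with \autoref{prop:leechroots} and \autoref{prop:invtype}: any \gdh{} $g=\phi_\eta(\nu)\sigma_h$ projecting to $\nu$ with $\Phi(g)=A_2^3$ would have $\tilde\Pi(g)\subseteq\pi_\nu(\Lambda)$ realising $A_2^3$ on the sphere of squared radius $1/2$ about $h$, which we have just excluded. This removes the two remaining spurious entries of \autoref{table:13} for cycle shape $2^{12}$, namely $C_{3,2}^3A_{1,1}^3$ and $C_{3,2}^2A_{3,2}^2$, both of which have hole diagram $A_2^3$ by \autoref{prop:invtype}.

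The main obstacle is guaranteeing that the enumeration is genuinely complete rather than merely tractable: one must be sure that fixing a vertex at the origin and quotienting by $C_{\O(\Lambda)}(\nu)\ltimes\pi_\nu(\Lambda)$ loses no orbit, and that the short-vector search in the $12$-dimensional lattice is exhaustive up to the chosen radius. Everything else is the same routine lattice computation used throughout this section; once the search is set up correctly the non-existence is read off directly.
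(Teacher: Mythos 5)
Your proposal matches the paper's own treatment of this lemma: the paper establishes it by exactly the finite computation you describe, namely a short-vector search in \texttt{Magma} for the six-point $A_2^3$ configuration (half-norms $3/4$ within each $A_2$ pair, $1/2$ across pairs, on a sphere of squared radius $1/2$) up to the action of $C_{\O(\Lambda)}(\nu)\ltimes\pi_\nu(\Lambda)$, which returns no configurations. Your supporting details — $\rho_\nu=3/4$, the failure of the divisibility screening because $|\phi_\eta(\nu)|=4$ puts both required norms in $(2/|\phi_\eta(\nu)|)\Z$, and the resulting exclusion of the two spurious affine structures $C_{3,2}^3A_{1,1}^3$ and $C_{3,2}^2A_{3,2}^2$ — are all exactly as in the paper.
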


\begin{lem}
Let $\nu\in\O(\Lambda)$ be of cycle shape $2^{12}$. Then in $\pi_\nu(\Lambda)$ there is exactly:

\smallskip

One orbit under $C_{\O(\Lambda)}(\nu)\ltimes\pi_\nu(\Lambda)$ of the diagram $A_2$. There are $9{,}132{,}200$ possible centres $h\in s_\nu+\Lambda^\nu/18$ of this diagram, but only $31{,}680$ of them satisfy \eqref{item:cent1} to \eqref{item:cent3}. They are all in the same orbit under $C_{\O(\Lambda)}(\nu)\ltimes\pi_\nu(\Lambda)$.

One orbit under $C_{\O(\Lambda)}(\nu)\ltimes\pi_\nu(\Lambda)$ of the diagram $A_3$. There are $432$ possible centres $h\in\Lambda^\nu/10$ of this diagram, but only $72$ of them satisfy \eqref{item:cent1} to \eqref{item:cent3}. They are all in the same orbit under $C_{\O(\Lambda)}(\nu)\ltimes\pi_\nu(\Lambda)$.

\smallskip

In particular, there is at most one \gdh{} in $\Aut(V_\Lambda)$ up to conjugacy projecting to $\nu$ for each of the hole diagrams $A_2$ and $A_3$.
\end{lem}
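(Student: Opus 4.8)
The plan is to apply, for $\nu$ of cycle shape $2^{12}$, exactly the procedure set out at the start of this subsection to the two non-affine target diagrams occurring here: $A_2$, which arises at order $n=18$ (from an $F_4$-ideal, since $l_ih_i^\vee=2\cdot 9=18$), and $A_3$, which arises at order $n=10$ (from a $C_4$-ideal, since $l_ih_i^\vee=2\cdot 5=10$); the diagrams and orders are fixed by \autoref{prop:invtype} together with \autoref{prop:13}. I would first run a short vector search in \texttt{Magma} \cite{Magma} to enumerate the orbits under $C_{\O(\Lambda)}(\nu)\ltimes\pi_\nu(\Lambda)$ of each diagram, realised in $\pi_\nu(\Lambda)$ as lattice points whose pairwise distances are those prescribed in \autoref{prop:leechroots2} (with $\rho_\nu=3/4$, so that every vertex has squared distance $2(1-\rho_\nu)=1/2$ from the eventual centre). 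The claim to verify at this stage is that there is exactly one such orbit for $A_2$ and exactly one for $A_3$.

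For the unique orbit of each diagram I would then determine all admissible centres $h\in\pi_\nu(\Lambda\otimes_\Z\Q)$. Two constraints make this finite and tractable: by \autoref{prop:leechconj2} and the known order, $h$ ranges over $s_\nu+\Lambda^\nu/18$ for $A_2$ and over $\Lambda^\nu/10$ for $A_3$; and since $h$ must be equidistant from every vertex of the diagram, it lies on the intersection of the perpendicular bisectors of the vertex pairs, which reduces the dimension of the ensuing close-vector search in $\pi_\nu(\Lambda\otimes_\Z\R)$. Executing this should return the stated lists of $9{,}132{,}200$ candidate centres for $A_2$ and $432$ for $A_3$.

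Next I would impose conditions \eqref{item:cent1}--\eqref{item:cent3} on each candidate: that $g=\phi_\eta(\nu)\sigma_h$ has order exactly $n$, i.e.\ $\lcm(|\nu|,|\sigma_{h-s_\nu}|)=n$; that the full shell $\tilde\Pi(g)=\{\beta\in\pi_\nu(\Lambda)\mid\langle\beta-h,\beta-h\rangle/2=1-\rho_\nu\}$ realises precisely the target diagram rather than a strictly larger one; and that $g$ is genuinely a \gdh{} in the sense of \autoref{def:gendeephole}, in particular extremal. This should leave $31{,}680$ centres for $A_2$ and $72$ for $A_3$. A final orbit computation under $C_{\O(\Lambda)}(\nu)\ltimes\pi_\nu(\Lambda)$ checks that in each case the survivors form a single orbit; by the conjugacy classification of \autoref{prop:leechconj} the associated automorphisms $\phi_\eta(\nu)\sigma_h$ are then mutually conjugate in $\Aut(V_\Lambda)$, which gives at most one \gdh{} up to conjugacy projecting to $\nu$ for each of the distinct diagrams $A_2$ and $A_3$.

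The decisive difficulty, and the reason $2^{12}$ is the hardest cycle shape, is the sheer scale of the $A_2$ search. Because the non-affine $A_2$ has only two vertices it imposes just a single bisector constraint, while the large order $n=18$ makes the coset lattice $\Lambda^\nu/18$ in the rank-$12$ fixed space very fine; together these produce over nine million candidate centres. Enumerating them, and then repeatedly testing extremality for \eqref{item:cent3} (each test amounting to a verification that $g$ is a \gdh{}), is the genuine computational bottleneck, whereas the $A_3$ computation and both diagram-orbit searches are comparatively cheap.
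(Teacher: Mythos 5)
Your proposal is correct and follows essentially the same route as the paper: this lemma has no separate written proof, its justification being precisely the computational recipe laid out at the start of the non-affine subsection (diagram-orbit search via short vectors, candidate centres constrained to $s_\nu+\Lambda^\nu/n$ and to the equidistance hyperplanes, filtering by conditions \eqref{item:cent1}--\eqref{item:cent3}, and a final orbit check feeding into \autoref{prop:leechconj}), which you reproduce faithfully with the correct data $\rho_\nu=3/4$, $n=18$ (from $F_{4,2}$) for $A_2$ and $n=10$ (from $C_{4,2}$) for $A_3$. The only quibble is your closing remark: the paper reserves the title of hardest case for the $A_1^k$ diagrams of the following lemma (notably $A_1$ at order $46$, which needs the special $D_{12}^+$ coordinates), though within the present lemma the $A_2$ search is indeed the bottleneck.
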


We now discuss the most difficult cases, the \gdh{}s with cycle shape $2^{12}$ and hole diagrams $A_1$, $A_1^2$, $A_1^3$, $A_1^4$ and $A_1^6$. The hardest case is the potential \gdh{} of order $46$ with hole diagram $A_1$. Here, the hole diagram merely implies that the vector $h$ is the only and closest point at distance $1-\rho_\nu=1/4$ from the single vertex $\beta_1\in\pi_\nu(\Lambda)$ defining $A_1$.

Fortunately, we can exploit that the fixed-point lattice $\Lambda^\nu$ has a very symmetric embedding into Euclidean space. Indeed, let $D_{12}^+$ denote the positive-definite, integral lattice
\begin{equation*}
D_{12}^+\coloneqq\Big\{(x_1,\ldots,x_{12})\in\R^{12}\,\Big|\,\text{all }x_i\in\Z\text{ or all }x_i\in\Z+1/2 \text{ and }\sum_{i=1}^{12}x_i\in 2\Z \Big\}
\end{equation*}
embedded into Euclidean space $\R^{12}$ with the standard scalar product. It is the unique indecomposable, positive-definite, integral, unimodular lattice of rank $12$. Let $K\coloneqq\sqrt{2}D_{12}^+$ denote the lattice with lattice vectors scaled by $\sqrt{2}$. Then $K$ is even and
\begin{equation*}
\Lambda^\nu\cong K.
\end{equation*}
Moreover, we note that
\begin{equation*}
\pi_\nu(\Lambda)=(\Lambda^\nu)'=\Lambda^\nu/2.
\end{equation*}
The first equality holds since $\Lambda$ is unimodular, but the second equality (which is not just an isomorphism but a proper equality) is a special property of $\nu$.

The automorphism group of $D_{12}^+$ (and of $K$) is generated by permutations and even sign changes, i.e.\
\begin{equation*}
\O(K)=S_{12}\ltimes2^{11}.
\end{equation*}
The kernel of the map $C_{\O(\Lambda)}(\nu)\to\O(\Lambda^\nu)\cong\O(K)$ has order $2$ and is generated by $\nu$. The image has index $5040$ and is of the form $P\ltimes2^{11}$ where $P$ is some permutation group of index $5040$ in $S_{12}$.

In the following, we want to show that there is a unique $h\in\pi_\nu(\Lambda\otimes_\Z\Q)/\pi_\nu(\Lambda)\cong (K\otimes_\Z\Q)/(K/2)$ with a certain list of properties up to the action of $C_{\O(\Lambda)}(\nu)$. The number of elements we have to consider is too big to be amenable to a brute-force approach. We therefore split the computation into three parts, first considering only properties invariant under the much bigger group $S_{12}\ltimes2^{12}$ (where we allow all sign changes) and computing the orbits satisfying these, then under the group $\O(K)=S_{12}\ltimes2^{11}$ and finally under the group $C_{\O(\Lambda)}(\nu)$, i.e.\ $P\ltimes2^{11}$. In each step the number of orbits we consider remains manageable.

\begin{lem}
Let $\nu\in\O(\Lambda)$ be of cycle shape $2^{12}$. Then there is at most one \gdh{} in $\Aut(V_\Lambda)$ up to conjugacy projecting to $\nu$ for each of the hole diagrams $A_1$, $A_1^2$, $A_1^3$, $A_1^4$ and $A_1^6$.
\end{lem}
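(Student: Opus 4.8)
The plan is to apply, in its most demanding instance, the general strategy for the non-affine case set up above, now specialised to $\nu$ of cycle shape $2^{12}$. Here $\rho_\nu=3/4$, so by \autoref{prop:leechroots2} the shifted weights $\tilde\Pi(g)$ are precisely the lattice points of $\pi_\nu(\Lambda)$ lying on a sphere of radius $\sqrt{2(1-\rho_\nu)}=\sqrt{1/2}$ about the centre $h$, and a diagram $A_1^k$ consists of $k$ vertices that are pairwise non-adjacent in the sense of \autoref{prop:leechroots2}. Using the isometry $\Lambda^\nu\cong K=\sqrt2\,D_{12}^+$ together with the equality $\pi_\nu(\Lambda)=\Lambda^\nu/2$, I would realise everything in the Euclidean model $\frac{1}{\sqrt{2}}D_{12}^+$. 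First I would enumerate, for each $k\in\{1,2,3,4,6\}$, the orbits of such $A_1^k$-configurations under $C_{\O(\Lambda)}(\nu)\ltimes\pi_\nu(\Lambda)$ by fixing one vertex at the origin and running a short-vector search in \texttt{Magma} \cite{Magma}, exactly as in the preceding lemmas.

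Second, for each orbit of vertices I would determine the admissible centres $h$. The order $n$ of the putative \gdh{} is read off from \autoref{table:70} (the hardest being $n=46$ for $A_1$); then \autoref{prop:leechconj2} confines $h$ to $s_\nu+\Lambda^\nu/n$, and the requirement that $h$ be equidistant from all vertices restricts it further to the intersection of the equidistance hyperplanes of all pairs of vertices. For $k\ge 2$ this already reduces the search to finitely many candidates, among which I would retain only those satisfying \eqref{item:cent1} to \eqref{item:cent3}: that $g=\phi_\eta(\nu)\sigma_h$ have order exactly $n$, that $\tilde\Pi(g)$ be exactly $A_1^k$ rather than merely containing it, and that $g$ be extremal, i.e.\ a genuine \gdh{}. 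To finish it then suffices to check that the surviving centres all lie in a single $C_{\O(\Lambda)}(\nu)\ltimes\pi_\nu(\Lambda)$-orbit, which by \autoref{prop:leechconj} yields at most one conjugacy class.

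The hard part will be the case $k=1$, and to a lesser extent the other small $k$: a single vertex supplies no equidistance hyperplane, so a priori $h$ ranges over the whole of $(s_\nu+\Lambda^\nu/n)/\pi_\nu(\Lambda)\cong K\otimes_\Z\Q/(K/2)$ constrained only by the sphere and extremality conditions, a set far too large for direct enumeration (for $n=46$ the relevant quotient has order $23^{12}$). The remedy is to exploit the highly symmetric embedding $\Lambda^\nu\cong\sqrt2\,D_{12}^+$, whose automorphism group $\O(K)=S_{12}\ltimes2^{11}$ is generated by coordinate permutations and even sign changes and which receives the image $P\ltimes2^{11}$ of $C_{\O(\Lambda)}(\nu)$ (the kernel $\langle\nu\rangle$ of order $2$ acting trivially on the fixed space, the image having index $5040$). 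I would split the enumeration into three successive stages of increasing refinement along the chain $P\ltimes2^{11}\le S_{12}\ltimes2^{11}\le S_{12}\ltimes2^{12}$: first compute orbits of candidate centres under the larger group $S_{12}\ltimes2^{12}$ allowing all sign changes, keeping only invariants of $h$ under this group (such as the multiset of coordinate absolute values and the associated distance data); then refine each surviving orbit to the genuine automorphism group $\O(K)=S_{12}\ltimes2^{11}$; and finally refine to $C_{\O(\Lambda)}(\nu)=P\ltimes2^{11}$. Because at every stage the coarser invariants eliminate almost all representatives, the number of orbits to process stays manageable at each step.

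Carrying this out in \texttt{Magma} for each of $A_1$, $A_1^2$, $A_1^3$, $A_1^4$ and $A_1^6$, I expect that after imposing \eqref{item:cent1} to \eqref{item:cent3} the admissible centres collapse to a single $C_{\O(\Lambda)}(\nu)\ltimes\pi_\nu(\Lambda)$-orbit in every case, giving at most one \gdh{} up to conjugacy projecting to $\nu$ for each diagram. The only delicate point is the orbit-counting bookkeeping in the three-stage reduction; the geometric input (the sphere condition, relative distances, extremality) is identical to the earlier lemmas, and the correctness of the staged computation rests solely on the group containments displayed above.
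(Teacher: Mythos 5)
Your proposal is correct and is essentially the paper's own proof: the same passage to the Euclidean model $K=\sqrt{2}D_{12}^+$ with $\pi_\nu(\Lambda)=\Lambda^\nu/2$, the same pruning conditions (order exactly $n$, the diagram being exactly $A_1^k$, extremality), and the same three-stage orbit refinement along $S_{12}\ltimes2^{12}\supseteq\O(K)=S_{12}\ltimes2^{11}\supseteq P\ltimes2^{11}$ to make the order-$46$ case $A_1$ computationally feasible, ending with the check that a single $C_{\O(\Lambda)}(\nu)$-orbit of centres survives. The only cosmetic difference is that the paper runs this staged reduction for all five diagrams (describing $A_1$ and declaring the rest analogous), while you reserve it chiefly for $A_1$ and lean on equidistance hyperplanes for $k\geq2$; this does not affect correctness.
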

\begin{proof}
We only describe the hardest case of the diagram $A_1$. The other cases are treated analogously.

A \gdh{} with hole diagram $A_1$ has order $46$ and is conjugate to $g=\phi_\eta(\nu)\sigma_h$ for some standard lift $\phi_\eta(\nu)$ and $h\in s_\nu+\Lambda^\nu/46\subseteq\Lambda^\nu/92$. By applying a translation in $\pi_\nu(\Lambda)$, we may assume that the only vertex of the hole diagram $A_1$ in $\pi_\nu(\Lambda)$ is the origin. Then $h$ has the following properties:
\begin{enumerate}
\item $h\in\Lambda^\nu/92$,
\item $\langle h,h\rangle/2=1/4$,
\item $\langle h-\beta,h-\beta\rangle/2\geq 1/4$ for all $\beta\in\pi_\nu(\Lambda)$,
\item $\langle h-\beta,h-\beta\rangle/2=1/4$ and $\beta\in\pi_\nu(\Lambda)$ if and only if $\beta=0$.
\end{enumerate}
We consider $\tilde{h}\coloneqq92h$. The above conditions are equivalent to:
\begin{enumerate}
\item[(1)] $\tilde{h}\in\Lambda^\nu$,
\item[(2)] $\langle\tilde{h},\tilde{h}\rangle/2=46^2$,
\item[(3)] $\langle\tilde{h},\beta\rangle\leq46\langle\beta,\beta\rangle/2$ for all $\beta\in\Lambda^\nu$,
\item[(4)] $\langle\tilde{h},\beta\rangle=46\langle\beta,\beta\rangle/2$ and $\beta\in\Lambda^\nu$ if and only if $\beta=0$.
\end{enumerate}
We identify $\Lambda^\nu$ with $K$ and write $\tilde{h}=\sqrt{2}(h_1,\ldots,h_{12})$. Then the first condition is equivalent to  either all $h_i\in\Z$ or all $h_i\in\Z+1/2$, and moreover $\sum_{i=1}^{12}h_i\in 2\Z$. We actually know that $\tilde{h}\in 92s_\nu+2\Lambda^\nu$ for some $s_\nu\in\Lambda^\nu/4$ (such that $\phi_\eta(\nu)\sigma_{s_\nu}$ has order~$2$). By choosing an $s_\nu$ we see that either all $h_i\in2\Z$ or all $h_i\in2\Z+1$. In total, the above conditions imply:
\begin{enumerate}
\item[(1')] all $h_i\in2\Z$ or all $h_i\in2\Z+1$,
\item[(2')] $\sum_{i=1}^{12}h_i^2=46^2$,
\item[(3')] $|h_i|+|h_j|<46$ for $i\neq j$.
\end{enumerate}
We determine the orbits of the solutions of these three conditions up to the action of $S_{12} \ltimes 2^{12}$, i.e.\ we ignore signs and permutations. This is a simple combinatorial problem with $10,301$ solutions.

\smallskip

We then consider the corresponding orbits under $\O(K)=S_{12}\ltimes2^{11}$, i.e.\ each orbit represented by a sequence $(h_1,\ldots,h_{12})$ not containing a $0$ splits up into two orbits by introducing a sign at, e.g., the first entry. The fact that $g$ is extremal implies that the twisted modules $V(g)$, $V(g^5)$, $V(g^9)$, $V(g^{13})$, $V(g^{17})$ and $V(g^{21})$ each have conformal weight at least~$1$. Since $\phi_\eta(\nu)^4=\id$, it follows that $g^{4k+1}=\phi_\eta(\nu)\sigma_{(4k+1)h}$ so that these conditions translate to
\begin{equation}\tag{4'}
\min_{\beta\in 46\Lambda^\nu}\frac{\langle(4k+1)\tilde{h}-\beta,(4k+1)\tilde{h}-\beta\rangle}{2}=46^2
\end{equation}
for $k=0,1,\ldots,11$. In fact, for $k=0$ we require equality and that there is exactly one closest vector, namely the one forming the diagram $A_1$. These conditions are invariant under $\O(K)=S_{12}\ltimes2^{11}$. The result is that there is exactly one orbit under $\O(K)$ satisfying conditions (1') to (4'), namely $(0,2,4,6,8,10,12,14,16,18,20,24)$.

\smallskip

Finally, we split up this orbit into the orbits under the action of the centraliser $C_{\O(\Lambda)}(\nu)$, i.e.\ under $P\ltimes2^{11}$. In this case, since all the $h_i$ are distinct, these orbits are in natural bijection with the $5040$ cosets of $P$ in $S_{12}$, which can be computed using \texttt{GAP} \cite{GAP4.10.2}. For these orbits we then explicitly check if they can be \gdh{}s of order $46$, i.e.\ in particular extremal. In the end, this leaves us with just one orbit $h \in \pi_\nu(\Lambda\otimes_\Z\Q)/\pi_\nu(\Lambda)$ under the action of $C_{\O(\Lambda)}(\nu)$, which concludes the proof.
\end{proof}

We remark that the \gdh{}s (of order $n$) for the diagrams $A_1$, $A_1^2$, $A_1^3$, $A_1^4$ and $A_1^6$ correspond to the vectors $h=\sqrt{2}(h_1,\ldots,h_{12})/(2n)$ in $K/(2n)\subseteq\R^{12}$ specified by the following $h_i$:
\[
\begin{array}{l|rrrrrrrrrrrr}
A_1              &  0 & 2 & 4 & 6 & 8 & 10 & 12 & 14 & 16 & 18 & 20 & 24\\
A_1^2            &  0 & 0 & 2 & 2 & 4 &  4 &  6 &  6 &  8 &  8 & 10 & 12\\
A_1^3            &  0 & 0 & 0 & 2 & 2 &  2 &  4 &  4 &  4 &  6 &  6 &  8\\
A_1^4            &  0 & 0 & 0 & 0 & 2 &  2 &  2 &  2 &  4 &  4 &  4 &  6\\
A_1^6            &  0 & 0 & 0 & 0 & 0 &  0 &  2 &  2 &  2 &  2 &  2 &  4\\
\tilde{A}_1^{12} &  0 & 0 & 0 & 0 & 0 &  0 &  0 &  0 &  0 &  0 &  0 &  2
\end{array}
\]
Here, we ignore signs and the order of the entries, which in any case depend on the concrete choice of the isomorphism $\Lambda^\nu\cong K$.


\subsection{Classification Results}

We summarise the above results in:
\begin{prop}\label{prop:coolesache}
There are at most $70$ conjugacy classes of \gdh{}s $g$ in $\Aut(V_\Lambda)$ with $\rk((V_{\Lambda}^g)_1)>0$. They are described in \autoref{table:70}.
\end{prop}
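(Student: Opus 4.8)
The plan is to assemble the per-cycle-shape bounds obtained in the preceding lemmas into a single count, using the Holy Correspondence (\autoref{thm:bij}) and the enumeration of candidate data in \autoref{prop:13} as the organising framework. A \gdh{} $g$ with $\rk((V_\Lambda^g)_1)>0$ falls into one of two types according to whether $(V_\Lambda^{\orb(g)})_1$ is abelian or semisimple. By the remark following \autoref{thm:bij}, the abelian case occurs exactly when $g=\id$, contributing the single class with generalised hole diagram $(1^{24},\emptyset)$ and $V_\Lambda^{\orb(\id)}=V_\Lambda$. Every remaining \gdh{} has order $n>1$ and semisimple $\g=(V_\Lambda^{\orb(g)})_1$, so that by \autoref{thm:bij} its affine structure together with the cycle shape $\varphi(g)$ of its image $\nu$ in $\O(\Lambda)$ satisfies the four conditions of \autoref{prop:conditions}. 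By \autoref{prop:13} there are precisely $82$ such pairs: the $69$ genuine ones of \autoref{table:70} and the $13$ spurious ones of \autoref{table:13}.

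First I would invoke \autoref{prop:invtype}, which shows that the affine structure of a candidate determines its Dynkin diagram $\Phi(g)$, together with the observation after \autoref{prop:13} that no affine structure recurs across the $82$ pairs. It therefore suffices to bound, for each candidate separately, the number of conjugacy classes in $\Aut(V_\Lambda)$ projecting to the prescribed $\nu$ and realising the prescribed diagram as lattice points on the appropriate sphere in $\pi_\nu(\Lambda)$. These bounds are exactly what the lemmas of the two subsections supply, arranged by the eleven cycle shapes of the Projection to $\Co_0$ theorem. The $13$ spurious candidates are discarded in two stages: the eight of cycle shapes $6^4$, $4^6$, $3^8$ and $2^44^4$ are ruled out because the prescribed inter-vertex norms fail the necessary integrality; and the five of cycle shapes $1^22^23^26^2$ and $2^{12}$ — namely $\tilde{D}_4A_1^2$, $\tilde{A}_3^2$, $\tilde{D}_5$ and the two copies of $A_2^3$ — are ruled out because the prescribed diagram does not occur as a hole diagram in the corresponding $\pi_\nu(\Lambda)$. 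For each of the $69$ genuine candidates the relevant lemma yields at most one conjugacy class.

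It then remains only to tally the contributions, and here the key point is that the synthesis introduces no over- or undercounting. Since the generalised hole diagram $(\varphi(g),\Phi(g))$ is a conjugacy invariant, \gdh{}s with distinct cycle shapes, or with equal cycle shape but distinct diagrams, are never conjugate; and since each genuine affine structure corresponds to exactly one pair $(\varphi,\Phi)$, distinct genuine candidates yield disjoint families of conjugacy classes. Summing $1$ for the identity and at most $1$ for each of the $69$ genuine candidates, with $0$ for each spurious one, bounds the number of conjugacy classes by $70$, and \autoref{table:70} records them.

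The computational heart of the argument has already been carried out in the lemmas — in particular the short- and close-vector searches in $\pi_\nu(\Lambda)$ and the delicate combinatorial reduction for the order-$46$ diagram $A_1$ of cycle shape $2^{12}$ — so the main obstacle at the level of this proposition is the bookkeeping that guarantees completeness: one must confirm that the eleven cycle shapes together with $g=\id$ exhaust every possibility left open by \autoref{prop:13}, and that each of the $82$ candidate pairs has indeed been addressed by some lemma. Given the invariance of the generalised hole diagram and the one-to-one correspondence between genuine affine structures and candidate pairs, I expect this to reduce to a careful tally rather than any new argument.
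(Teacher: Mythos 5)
Your proposal is correct and matches the paper's own (largely implicit) argument: the proposition is precisely the summary of the preceding lemmas, combining the $82$ candidate pairs of \autoref{prop:13}, the elimination of the $13$ spurious cases in the two stages you describe, the at-most-one-class-per-diagram bounds from the cycle-shape lemmas, and the identity class, with conjugacy-invariance of $(\varphi(g),\Phi(g))$ and \autoref{prop:invtype} ensuring the tally is a valid upper bound. No substantive difference from the paper's route.
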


In \cite{MS23} we list $70$ \gdh{}s $g$ in $\Aut(V_\Lambda)$ with $\rk((V_{\Lambda}^g)_1)>0$. Using \autoref{prop:invtype} we can easily determine their generalised hole diagrams, which are all distinct. This implies the main result:
\begin{thm}[Classification of \GDH{}s]\label{thm:gdhclass}
There are exactly $70$ conjugacy classes of \gdh{}s $g$ in $\Aut(V_\Lambda)$ with $\rk((V_{\Lambda}^g)_1)>0$. The conjugacy class of $g$ is uniquely fixed by its generalised hole diagram.
\end{thm}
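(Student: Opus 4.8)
The plan is to prove the theorem by meeting the upper bound of \autoref{prop:coolesache} with an explicit realisation of every case, so that the two counts agree at $70$. \autoref{prop:coolesache} already provides one inequality: there are \emph{at most} $70$ conjugacy classes of \gdh{}s $g$ with $\rk((V_\Lambda^g)_1)>0$, and the case-by-case analysis underlying it shows moreover that each generalised hole diagram occurring in \autoref{table:70} (together with the diagram $(1^{24},\emptyset)$ of the identity) is realised by at most one such conjugacy class. It therefore remains to produce $70$ pairwise non-conjugate \gdh{}s with $\rk((V_\Lambda^g)_1)>0$ and to match them with these diagrams.

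For the existence half I would invoke the explicit list of $70$ \gdh{}s constructed in \cite{MS19}, each having $\rk((V_\Lambda^g)_1)>0$. Their orbifold constructions $V_\Lambda^{\orb(g)}$ are \strathol{} \voa{}s of central charge $24$ with $V_1\neq\{0\}$, and in \cite{MS19} these are seen to be pairwise non-isomorphic (for instance by exhibiting $70$ distinct weight-one Lie algebras). Since by the Holy Correspondence (\autoref{thm:bij}) the map $g\mapsto V_\Lambda^{\orb(g)}$ is a bijection on algebraic conjugacy classes, the $70$ input automorphisms lie in $70$ distinct algebraic conjugacy classes, and hence in $70$ distinct ordinary conjugacy classes. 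Combined with \autoref{prop:coolesache}, this forces equality: there are \emph{exactly} $70$ conjugacy classes of \gdh{}s with $\rk((V_\Lambda^g)_1)>0$. (As a by-product, algebraic and ordinary conjugacy coincide on this set.)

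For the second claim I would assign to each of these $70$ classes its generalised hole diagram $(\varphi(g),\Phi(g))$, reading off $\Phi(g)$ from the affine structure of $V_\Lambda^{\orb(g)}$ via \autoref{prop:invtype} and taking $\varphi(g)$ to be the corresponding cycle shape in $\O(\Lambda)\cong\Co_0$. By \autoref{prop:coolesache} each such diagram is realised by at most one class, so the assignment class $\mapsto$ diagram is injective; as there are exactly $70$ classes, it is a bijection onto the $70$ diagrams of \autoref{table:70}. In particular the generalised hole diagram uniquely determines the conjugacy class, which is the second assertion.

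The conceptually hard part of the whole argument is \autoref{prop:coolesache} itself --- the geometric enumeration, organised by cycle shape, of the Dynkin diagrams appearing as lattice points on spheres of radius $\sqrt{2(1-\rho_\nu)}$ in the fixed-point lattices $\pi_\nu(\Lambda)$, culminating in the order-$46$ case of cycle shape $2^{12}$ with diagram $A_1$ --- which I am permitted to assume here. Within the present assembly the only delicate point is the non-conjugacy of the $70$ explicit \gdh{}s in the existence half; I would deduce it from the injectivity of the Holy Correspondence rather than by comparing conjugacy invariants by hand, as this cleanly bridges the gap between algebraic and ordinary conjugacy without further work.
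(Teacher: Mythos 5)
Your proposal is correct and follows essentially the same route as the paper: the upper bound and the at-most-one-class-per-diagram statement come from \autoref{prop:coolesache}, and the lower bound comes from the $70$ explicit \gdh{}s of \cite{MS19}, whose distinct orbifold Lie algebras force them into distinct classes. The only (harmless) variation is that you certify non-conjugacy of the $70$ examples via injectivity of the Holy Correspondence (\autoref{thm:bij}) on algebraic conjugacy classes, whereas the paper reads off their $70$ distinct generalised hole diagrams, which are conjugacy invariants, via \autoref{prop:invtype}; both arguments are valid and yield the same by-product that conjugacy and algebraic conjugacy coincide on this set.
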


An automorphism $g$ of order $n$ is called \emph{rational} if $g$ is conjugate to $g^i$ for all $i\in\Z_n$ with $(i,n)=1$ (see, e.g., Chapter~7 in \cite{Ser08}). Equivalently, the conjugacy class and the algebraic conjugacy class (i.e.\ the conjugacy class of the cyclic subgroup) of $g$ coincide. The following observation is immediate:
\begin{cor}
The \gdh{}s $g$ in $\Aut(V_\Lambda)$ with $\rk((V_{\Lambda}^g)_1)>0$ are rational, i.e.\ conjugacy is equivalent to algebraic conjugacy.
\end{cor}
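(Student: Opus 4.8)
The plan is to derive the statement immediately from the classification in \autoref{thm:gdhclass} together with the fact, observed in \autoref{sec:holediag}, that the generalised hole diagram $(\varphi(g),\Phi(g))$ of a \gdh{} depends only on its algebraic conjugacy class in $\Aut(V_\Lambda)$. Since the forward implication (ordinary conjugacy implies algebraic conjugacy) is trivial, the entire content is the reverse implication for \gdh{}s with $\rk((V_\Lambda^g)_1)>0$.

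First I would take two such \gdh{}s $g$ and $g'$, both with positive rank, that are algebraically conjugate, i.e.\ whose cyclic subgroups $\langle g\rangle$ and $\langle g'\rangle$ are conjugate in $\Aut(V_\Lambda)$. Because the generalised hole diagram is invariant under algebraic conjugacy, $g$ and $g'$ then share the same diagram, $(\varphi(g),\Phi(g))=(\varphi(g'),\Phi(g'))$. By \autoref{thm:gdhclass} the ordinary conjugacy class of such a \gdh{} is uniquely determined by its generalised hole diagram, so $g$ and $g'$ are conjugate in $\Aut(V_\Lambda)$. This already shows that on the set of \gdh{}s with $\rk((V_\Lambda^g)_1)>0$ ordinary conjugacy and algebraic conjugacy coincide.

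To phrase this as rationality, I would fix a single such \gdh{} $g$ of order $n$ and an integer $i$ with $(i,n)=1$, and apply the previous step to the pair $g$ and $g^i$. Here I would note that $g^i$ generates the same cyclic group $\langle g^i\rangle=\langle g\rangle$, so that $V_\Lambda^{g^i}=V_\Lambda^g$ and $V_\Lambda^{\orb(g^i)}\cong V_\Lambda^{\orb(g)}$; hence the \gdh{} conditions (type, positivity, extremality and the rank condition) hold for $g^i$ as well, with $\rk((V_\Lambda^{g^i})_1)=\rk((V_\Lambda^g)_1)>0$. Since $g^i$ is manifestly algebraically conjugate to $g$, the argument above yields $g\sim g^i$ for every $i$ coprime to $n$, which is precisely rationality.

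I do not expect a genuine obstacle: the only point requiring a word of care is the verification that the coprime powers $g^i$ remain \gdh{}s with positive rank, which reduces to the invariance of the fixed-point subalgebra $V_\Lambda^g$ and the orbifold $V_\Lambda^{\orb(g)}$ under passing to another generator of $\langle g\rangle$. Everything else is a direct combination of \autoref{thm:gdhclass} with the algebraic-conjugacy invariance of the generalised hole diagram, which is why the corollary is immediate.
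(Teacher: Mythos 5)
Your proposal is correct and takes essentially the same route as the paper, which records this corollary as an immediate consequence of \autoref{thm:gdhclass} combined with the observation in \autoref{sec:holediag} that the generalised hole diagram depends only on the algebraic conjugacy class — exactly the chain you spell out. Your extra verification that coprime powers $g^i$ remain \gdh{}s of positive rank (via $\langle g^i\rangle=\langle g\rangle$, hence $V_\Lambda^{(g^i)^d}=V_\Lambda^{g^d}$ and $V_\Lambda^{\orb(g^i)}\cong V_\Lambda^{\orb(g)}$) is a detail the paper leaves implicit, since it is already needed for the Holy Correspondence of \autoref{thm:bij} to be well defined on algebraic conjugacy classes.
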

We also recover the decomposition of the Schellekens \voa{}s into $12$ families described by Höhn in \cite{Hoe17} (cf.\ \cite{Mor21,Mor23}):
\begin{thm}[Projection to $\Co_0$]\label{thm:proj11}
Under the natural projection $\Aut(V_\Lambda)\to\O(\Lambda)$ the \gdh{}s $g$ of $V_\Lambda$ with $\rk((V_\Lambda^g)_1)>0$ map to the $11$ algebraic conjugacy classes in $\O(\Lambda)\cong\Co_0$ with cycle shapes $1^{24}$, $1^82^8$, $1^63^6$, $2^{12}$, $1^42^24^4$, $1^45^4$, $1^22^23^26^2$, $1^37^3$, $1^22^14^18^2$, $2^36^3$ and $2^210^2$.
\end{thm}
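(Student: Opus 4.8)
The plan is to derive this statement directly from the classification of \gdh{}s in \autoref{thm:gdhclass}. Recall that the natural projection $\Aut(V_\Lambda)\to\O(\Lambda)$ sends $g=\phi_\eta(\nu)\sigma_h\mapsto\nu$, and that by definition the cycle shape $\varphi(g)$ of $\nu$ is precisely the first component of the generalised hole diagram of $g$. Since \autoref{thm:gdhclass} shows that the $70$ conjugacy classes of \gdh{}s with $\rk((V_\Lambda^g)_1)>0$ are in bijection with their generalised hole diagrams, and since these diagrams are recorded in \autoref{table:70}, the image of each class under the projection is already determined: it is the conjugacy class of the corresponding $\nu\in\O(\Lambda)$, whose cycle shape can be read off from the table.

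First I would collect the first components $\varphi(g)$ over all $70$ entries of \autoref{table:70}. These are exactly the eleven cycle shapes $1^{24}$, $1^82^8$, $1^63^6$, $2^{12}$, $1^42^24^4$, $1^45^4$, $1^22^23^26^2$, $1^37^3$, $1^22^14^18^2$, $2^36^3$ and $2^210^2$, which are precisely the cycle shapes organising the case distinction of the lemmas in \autoref{sec:gdhclass}. Hence the projection lands in the union of the conjugacy classes of $\Co_0$ carrying these shapes, and every one of the eleven shapes is attained.

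The remaining point, and the one requiring care, is to confirm that each of these eleven cycle shapes singles out a \emph{single} algebraic conjugacy class in $\O(\Lambda)\cong\Co_0$; a priori a cycle shape need not determine a conjugacy class in $\Co_0$. I would verify this from the conjugacy-class data of $\Co_0$ (for instance the \textsc{Atlas}), checking that for each of these eleven shapes there is exactly one class, up to conjugacy of cyclic subgroups, so that no two distinct classes share any of them. Because the representatives $\nu$ used in \autoref{prop:leechconj} are chosen one per conjugacy class, the eleven shapes then label exactly eleven distinct classes, and the projection maps onto precisely these. Together with the preceding corollary, which states that these \gdh{}s are rational so that conjugacy and algebraic conjugacy coincide, this gives the claim.

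The main obstacle is therefore not the reduction itself, which is a bookkeeping step built on top of \autoref{thm:gdhclass}, but ensuring that the cycle-shape-to-class correspondence is genuinely one-to-one for these eleven shapes; all other potential ambiguities have already been eliminated by the classification.
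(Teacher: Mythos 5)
Your proposal is correct and follows essentially the same route as the paper, which states this theorem as an observation read off from \autoref{thm:gdhclass} and the cycle shapes recorded in \autoref{table:70}. The one subtlety you rightly flag — that each of the eleven cycle shapes singles out a unique algebraic conjugacy class in $\O(\Lambda)\cong\Co_0$ — is handled in the paper implicitly through \autoref{prop:13} (cited from \cite{ELMS21}), where the solutions to the conditions of \autoref{prop:conditions} (which depend on $\nu$ only via its cycle shape) are enumerated as pairs with actual conjugacy classes; your proposed check against the $\Co_0$ class data is an equivalent way to supply that fact.
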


A consequence of the above classification of \gdh{}s and the holey correspondence in \cite{MS23} is a new, geometric proof of the following result:
\begin{thm}[Classification of \VOA{}s]\label{thm:voaclass}
Up to isomorphism there are exactly $70$ \strathol{} \voa{}s $V$ of central charge $24$ with $V_1\neq\{0\}$. Such a \voa{} is uniquely determined by its $V_1$-structure.
\end{thm}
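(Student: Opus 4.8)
The plan is to deduce the theorem from the Holy Correspondence (\autoref{thm:bij}) together with the classification of \gdh{}s (\autoref{thm:gdhclass}), so that the statement becomes essentially a bookkeeping corollary of the preceding work. First I would settle the count. By \autoref{thm:bij} the isomorphism classes of \strathol{} \voa{}s $V$ of central charge $24$ with $V_1\neq\{0\}$ are in bijection with the algebraic conjugacy classes of \gdh{}s $g\in\Aut(V_\Lambda)$ satisfying $\rk((V_\Lambda^g)_1)>0$. Since these \gdh{}s are rational (as observed in the corollary above), their conjugacy classes coincide with their algebraic conjugacy classes, so \autoref{thm:gdhclass} shows there are exactly $70$ of them. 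Hence there are exactly $70$ such \voa{}s.

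It then remains to show that $V$ is determined up to isomorphism by its $V_1$-structure. If $V_1$ is abelian then $\dim(V_1)=24$ and $V\cong V_\Lambda$ is the unique such \voa{}, so I may assume $V_1\cong\g_1\oplus\ldots\oplus\g_r$ is semisimple. The idea is to recover the generalised hole diagram of the associated \gdh{} purely from the Lie algebra $V_1$. First, the lowest-order trace identity \eqref{eq:221} expresses each level $k_i$ through $\dim(V_1)$ and the dual Coxeter number $h_i^\vee$ of $\g_i$, so the isomorphism type of the Lie algebra $V_1$ already fixes the affine structure $\g_{1,k_1}\ldots\g_{r,k_r}$.

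Now let $g$ be the \gdh{} with $V_\Lambda^{\orb(g)}\cong V$ and projection $\nu\in\O(\Lambda)$. By \autoref{prop:invtype} the Dynkin diagram $\Phi(g)$ is read off directly from the isomorphism type of $\g=V_1$, and by \autoref{prop:13} no affine structure occurs in more than one pair (affine structure, conjugacy class of $\nu$), so the cycle shape $\varphi(g)$ of $\nu$ is likewise determined by the affine structure. Consequently the whole generalised hole diagram $(\varphi(g),\Phi(g))$ is a function of $V_1$ alone. If $V$ and $V'$ are two such \voa{}s with isomorphic weight-$1$ Lie algebras, they therefore arise as $V_\Lambda^{\orb(g)}$ for \gdh{}s sharing the same hole diagram; by \autoref{thm:gdhclass} and rationality these \gdh{}s lie in a single algebraic conjugacy class, whence \autoref{thm:bij} forces $V\cong V'$.

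The substance of the argument lies entirely in the two input theorems, so no genuine obstacle remains at this stage. The only points requiring care are the passage between ordinary and algebraic conjugacy, which is handled by the rationality of the \gdh{}s, and the verification that the affine structure pins down \emph{both} components of the hole diagram; for the latter the crucial ingredient is the uniqueness remark accompanying \autoref{prop:13}, which guarantees that $\varphi(g)$, and not merely $\Phi(g)$, is recoverable from $V_1$.
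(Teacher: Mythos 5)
Your proposal is correct and follows essentially the same route as the paper, which states this theorem precisely as a consequence of the Holy Correspondence (\autoref{thm:bij}) and the classification of generalised deep holes (\autoref{thm:gdhclass}), with rationality bridging conjugacy and algebraic conjugacy. Your filled-in details — recovering the affine structure from $V_1$ via the trace identity \eqref{eq:221}, the diagram $\Phi(g)$ via \autoref{prop:invtype}, and the cycle shape $\varphi(g)$ via the uniqueness remark after \autoref{prop:13} — are exactly the ingredients the paper leaves implicit.
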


We have thus obtained a geometric proof of this classification that is analogous to the classification of the Niemeier lattices by enumeration of the corresponding deep holes of the Leech lattice $\Lambda$ \cite{CPS82,Bor85b}. In fact, it includes it as a special case (see \autoref{prop:niemeiercase}).

\medskip

We mention that \cite{LM23} give an interpretation of the \gdh{}s of $V_\Lambda$ in terms of actual deep holes of $\Lambda$ after rescaling.

Moreover, we remark that Höhn's approach to the classification problem in \cite{Hoe17} (and \cite{Lam20}) based on coset constructions can also be used to give a uniform proof of the above classification result \cite{Hoe17,BLS23}.

\begin{table}[p]
\caption{The $70$ \gdh{}s of $V_\Lambda$ whose corresponding orbifold constructions realise all non-zero Lie algebras on Schellekens' list (continued on next page).}
\renewcommand{\arraystretch}{1.09}
\begin{tabular}{rllrrll}
No. & No. & $(V_\Lambda^{\orb(g)})_1$ & Dim. & $n$ & $\rho(V_\Lambda(g^m))$ & $\Phi(g)$\\\hline\hline
\multicolumn{6}{c}{Rk. 24, cycle shape $1^{24}$}\\\hline\\[-1.2em]
70 &  A1 & $D_{24,1}$                  & 1128 & 46 & $1, 22/23, 1, 0$                  & $\tilde{D}_{24}$\\
69 &  A2 & $D_{16,1}E_{8,1}$           &  744 & 30 & $1, 14/15, 1, 1, 1, 1, 1, 0$      & $\tilde{D}_{16}\tilde{E}_8$\\
68 &  A3 & $E_{8,1}^3$                 &  744 & 30 & $1, 14/15, 9/10, 5/6, 1, 1, 1, 0$ & $\tilde{E}_8^3$\\
67 &  A4 & $A_{24,1}$                  &  624 & 25 & $1, 1, 0$                         & $\tilde{A}_{24}$\\
66 &  A5 & $D_{12,1}^2$                &  552 & 22 & $1, 10/11, 1, 0$                  & $\tilde{D}_{12}^2$\\
65 &  A6 & $A_{17,1}E_{7,1}$           &  456 & 18 & $1, 1, 1, 1, 1, 0$                & $\tilde{A}_{17}\tilde{E}_7$\\
64 &  A7 & $D_{10,1}E_{7,1}^2$         &  456 & 18 & $1, 8/9, 1, 1, 1, 0$              & $\tilde{D}_{10}\tilde{E}_7^2$\\
63 &  A8 & $A_{15,1}D_{9,1}$           &  408 & 16 & $1, 1, 1, 1, 0$                   & $\tilde{A}_{15}\tilde{D}_9$\\
61 &  A9 & $D_{8,1}^3$                 &  360 & 14 & $1, 6/7, 1, 0$                    & $\tilde{D}_8^3$\\
60 & A10 & $A_{12,1}^2$                &  336 & 13 & $1, 0$                            & $\tilde{A}_{12}^2$\\
59 & A11 & $A_{11,1}D_{7,1}E_{6,1}$    &  312 & 12 & $1, 1, 1, 1, 1, 0$                & $\tilde{A}_{11}\tilde{D}_7\tilde{E}_6$\\
58 & A12 & $E_{6,1}^4$                 &  312 & 12 & $1, 1, 3/4, 1, 1, 0$              & $\tilde{E}_6^4$\\
55 & A13 & $A_{9,1}^2D_{6,1}$          &  264 & 10 & $1, 1, 1, 0$                      & $\tilde{A}_9^2\tilde{D}_6$\\
54 & A14 & $D_{6,1}^4$                 &  264 & 10 & $1, 4/5, 1, 0$                    & $\tilde{D}_6^4$\\
51 & A15 & $A_{8,1}^3$                 &  240 &  9 & $1, 1, 0$                         & $\tilde{A}_8^3$\\
49 & A16 & $A_{7,1}^2D_{5,1}^2$        &  216 &  8 & $1, 1, 1, 0$                      & $\tilde{A}_7^2\tilde{D}_5^2$\\
46 & A17 & $A_{6,1}^4$                 &  192 &  7 & $1, 0$                            & $\tilde{A}_6^4$\\
43 & A18 & $A_{5,1}^4D_{4,1}$          &  168 &  6 & $1, 1, 1, 0$                      & $\tilde{A}_5^4\tilde{D}_4$\\
42 & A19 & $D_{4,1}^6$                 &  168 &  6 & $1, 2/3, 1, 0$                    & $\tilde{D}_4^6$\\
37 & A20 & $A_{4,1}^6$                 &  144 &  5 & $1, 0$                            & $\tilde{A}_4^6$\\
30 & A21 & $A_{3,1}^8$                 &  120 &  4 & $1, 1, 0$                         & $\tilde{A}_3^8$\\
24 & A22 & $A_{2,1}^{12}$              &   96 &  3 & $1, 0$                            & $\tilde{A}_2^{12}$\\
15 & A23 & $A_{1,1}^{24}$              &   72 &  2 & $1, 0$                            & $\tilde{A}_1^{24}$\\
 1 & A24 & $\C^{24}$                   &   24 &  1 & $0$                               & $\emptyset$\\\hline
\multicolumn{6}{c}{Rk. 16, cycle shape $1^82^8$}\\\hline\\[-1.2em]
62 &  B1 & $B_{8,1}E_{8,2}$            &  384 & 30 & $1, 14/15, 1, 1, 1, 1, 1, 0$      & $A_1\tilde{E}_8$\\
56 &  B2 & $B_{6,1}C_{10,1}$           &  288 & 22 & $1, 10/11, 1, 0$                  & $A_1A_9$\\
52 &  B3 & $C_{8,1}F_{4,1}^2$          &  240 & 18 & $1, 8/9, 1, 1, 1, 0$              & $A_2^2A_7$\\
53 &  B4 & $B_{5,1}E_{7,2}F_{4,1}$     &  240 & 18 & $1, 8/9, 1, 1, 1, 0$              & $A_1A_2\tilde{E}_7$\\
50 &  B5 & $A_{7,1}D_{9,2}$            &  216 & 16 & $1, 1, 1, 1, 0$                   & $\tilde{D}_9$\\
47 &  B6 & $B_{4,1}^2D_{8,2}$          &  192 & 14 & $1, 6/7, 1, 0$                    & $A_1^2\tilde{D}_8$\\
48 &  B7 & $B_{4,1}C_{6,1}^2$          &  192 & 14 & $1, 6/7, 1, 0$                    & $A_1A_5^2$\\
44 &  B8 & $A_{5,1}C_{5,1}E_{6,2}$     &  168 & 12 & $1, 1, 1, 1, 1, 0$                & $A_4\tilde{E}_6$\\
40 &  B9 & $A_{4,1}A_{9,2}B_{3,1}$     &  144 & 10 & $1, 1, 1, 0$                      & $A_1\tilde{A}_9$\\
39 & B10 & $B_{3,1}^2C_{4,1}D_{6,2}$   &  144 & 10 & $1, 4/5, 1, 0$                    & $A_1^2A_3\tilde{D}_6$\\
38 & B11 & $C_{4,1}^4$                 &  144 & 10 & $1, 4/5, 1, 0$                    & $A_3^4$\\
33 & B12 & $A_{3,1}A_{7,2}C_{3,1}^2$   &  120 &  8 & $1, 1, 1, 0$                      & $A_2^2\tilde{A}_7$\\
31 & B13 & $A_{3,1}^2D_{5,2}^2$        &  120 &  8 & $1, 1, 1, 0$                      & $\tilde{D}_5^2$\\
26 & B14 & $A_{2,1}^2A_{5,2}^2B_{2,1}$ &   96 &  6 & $1, 1, 1, 0$                      & $A_1\tilde{A}_5^2$\\
25 & B15 & $B_{2,1}^4D_{4,2}^2$        &   96 &  6 & $1, 2/3, 1, 0$                    & $A_1^4\tilde{D}_4^2$\\
16 & B16 & $A_{1,1}^4A_{3,2}^4$        &   72 &  4 & $1, 1, 0$                         & $\tilde{A}_3^4$\\
 5 & B17 & $A_{1,2}^{16}$              &   48 &  2 & $1, 0$                            & $\tilde{A}_1^{16}$\\
\end{tabular}
\renewcommand{\arraystretch}{1}
\label{table:70}
\end{table}
\addtocounter{table}{-1}

\begin{table}[p]
\caption{(continued)}
\renewcommand{\arraystretch}{1.2}
\begin{tabular}{rllrrll}
No. & No. & $(V_\Lambda^{\orb(g)})_1$ & Dim. & $n$ & $\rho(V_\Lambda(g^m))$ & $\Phi(g)$\\\hline\hline
\multicolumn{6}{c}{Rk. 12, cycle shape $1^63^6$}\\\hline
45 &  C1 & $A_{5,1}E_{7,3}$            & 168 & 18 & $1, 1, 1, 1, 1, 0$   & $\tilde{E}_7$\\
34 &  C2 & $A_{3,1}D_{7,3}G_{2,1}$     & 120 & 12 & $1, 1, 1, 1, 1, 0$   & $A_1\tilde{D}_7$\\
32 &  C3 & $E_{6,3}G_{2,1}^3$          & 120 & 12 & $1, 1, 3/4, 1, 1, 0$ & $A_1^3\tilde{E}_6$\\
27 &  C4 & $A_{2,1}^2A_{8,3}$          &  96 &  9 & $1, 1, 0$            & $\tilde{A}_8$\\
17 &  C5 & $A_{1,1}^3A_{5,3}D_{4,3}$   &  72 &  6 & $1, 1, 1, 0$         & $\tilde{A}_5\tilde{D}_4$\\
 6 &  C6 & $A_{2,3}^6$                 &  48 &  3 & $1, 0$               & $\tilde{A}_2^6$\\\hline
\multicolumn{6}{c}{Rk. 12, cycle shape $2^{12}$ (order doubling)}\\\hline
57 & D1a & $B_{12,2}$                  & 300 & 46 & $1, 22/23, 1, 0$     & $A_1$\\
41 & D1b & $B_{6,2}^2$                 & 156 & 22 & $1, 10/11, 1, 0$     & $A_1^2$\\
29 & D1c & $B_{4,2}^3$                 & 108 & 14 & $1, 6/7, 1, 0$       & $A_1^3$\\
23 & D1d & $B_{3,2}^4$                 &  84 & 10 & $1, 4/5, 1, 0$       & $A_1^4$\\
12 & D1e & $B_{2,2}^6$                 &  60 &  6 & $1, 2/3, 1, 0$       & $A_1^6$\\
 2 & D1f & $A_{1,4}^{12}$              &  36 &  2 & $1, 0$               & $\tilde{A}_1^{12}$\\
36 & D2a & $A_{8,2}F_{4,2}$            & 132 & 18 & $1, 1, 1, 1, 1, 0$   & $A_2$\\
22 & D2b & $A_{4,2}^2C_{4,2}$          &  84 & 10 & $1, 1, 1, 0$         & $A_3$\\
13 & D2c & $A_{2,2}^4D_{4,4}$          &  60 &  6 & $1, 1, 1, 0$         & $\tilde{D}_4$\\\hline
\multicolumn{6}{c}{Rk. 10, cycle shape $1^42^24^4$}\\\hline
35 & E1 & $A_{3,1}C_{7,2}$             & 120 & 16 & $1, 1, 1, 1, 0$      & $A_6$\\
28 & E2 & $A_{2,1}B_{2,1}E_{6,4}$      &  96 & 12 & $1, 1, 1, 1, 1, 0$   & $\tilde{E}_6$\\
18 & E3 & $A_{1,1}^3A_{7,4}$           &  72 &  8 & $1, 1, 1, 0$         & $\tilde{A}_7$\\
19 & E4 & $A_{1,1}^2C_{3,2}D_{5,4}$    &  72 &  8 & $1, 1, 1, 0$         & $A_2\tilde{D}_5$\\
 7 & E5 & $A_{1,2}A_{3,4}^3$           &  48 &  4 & $1, 1, 0$            & $\tilde{A}_3^3$\\\hline
\multicolumn{6}{c}{Rk. 8, cycle shape $1^45^4$}\\\hline
20 & F1 & $A_{1,1}^2D_{6,5}$           &  72 & 10 & $1, 1, 1, 0$         & $\tilde{D}_6$\\
 9 & F2 & $A_{4,5}^2$                  &  48 &  5 & $1, 0$               & $\tilde{A}_4^2$\\\hline
\multicolumn{6}{c}{Rk. 8, cycle shape $1^22^23^26^2$}\\\hline
21 & G1 & $A_{1,1}C_{5,3}G_{2,2}$      &  72 & 12 & $1, 1, 1, 1, 1, 0$   & $A_1A_4$\\
 8 & G2 & $A_{1,2}A_{5,6}B_{2,3}$      &  48 &  6 & $1, 1, 1, 0$         & $A_1\tilde{A}_5$\\\hline
\multicolumn{6}{c}{Rk. 6, cycle shape $1^37^3$}\\\hline
11 & H1 & $A_{6,7}$                    &  48 &  7 & $1, 0$               & $\tilde{A}_6$\\\hline
\multicolumn{6}{c}{Rk. 6, cycle shape $1^22^14^18^2$}\\\hline
10 & I1 & $A_{1,2}D_{5,8}$             &  48 &  8 & $1, 1, 1, 0$         & $\tilde{D}_5$\\\hline
\multicolumn{6}{c}{Rk. 6, cycle shape $2^36^3$ (order doubling)}\\\hline
14 & J1a & $A_{2,2}F_{4,6}$            &  60 & 18 & $1, 1, 1, 1, 1, 0$   & $A_2$\\
 3 & J1b & $A_{2,6}D_{4,12}$           &  36 &  6 & $1, 1, 1, 0$         & $\tilde{D}_4$\\\hline
\multicolumn{6}{c}{Rk. 4, cycle shape $2^210^2$ (order doubling)}\\\hline
4 & K1 & $C_{4,10}$                    &  36 & 10 & $1, 1, 1, 0$         & $A_3$
\end{tabular}
\renewcommand{\arraystretch}{1}
\end{table}

\FloatBarrier


\bibliographystyle{alpha_noseriescomma}
\bibliography{quellen}{}

\end{document}